\newif\ifanonymous
\theoremstyle{definition}
\newtheorem{assumption}{Assumption}
\newtheorem{example}{Example}
\newtheorem*{remark}{Remark}
\theoremstyle{plain}
\newtheorem{theorem}{Theorem}
\newtheorem{lemma}{Lemma}
\newtheorem{proposition}{Proposition}
\newtheorem{corollary}{Corollary}
\newtheorem{observation}{Observation}
\title{A Note on Piecewise Affine Decision Rules \\ for Robust, Stochastic, and Data-Driven Optimization}
\author{}
\author{%
Simon Thomä%
\footnote{Chair of Operations Management, RWTH Aachen, 52072 Aachen, Germany}, 
Maximilian Schiffer%
\footnote{School of Management \& Munich Data Science Institute, Technical University of Munich, 80333 Munich, Germany}, 
and Wolfram Wiesemann%
\footnote{Imperial College Business School, Imperial College London, London SW7 2AZ, UK}%
\\
simon.thomae@om.rwth-aachen.de
}
\date{\today}
\DeclareMathOperator*{\st}{subject\;to}
\DeclareMathOperator{\pas}{\textnormal{-a.s.}}
\DeclareMathOperator*{\minimize}{minimize}
\DeclareMathOperator*{\maximize}{maximize}
\DeclareMathOperator{\case}{\textnormal{if }}
\DeclareMathOperator{\otherwise}{\textnormal{otherwise}}
\DeclareMathOperator{\argmax}{argmax}
\DeclareMathOperator{\convex}{conv}
\newcommand{\norm}[1]{\left\lVert#1\right\rVert}
\newcommand{\numc}{K} 
\newcommand{\numu}{N} 
\newcommand{\numue}{I} 
\newcommand{\numuef}{J} 
\newcommand{\numuf}{{N'}} 
\newcommand{\nums}{G} 
\newcommand{\baseproblem}{\mathcal{P}}
\newcommand{\lbaseproblem}{\baseproblem'}
\newcommand{\aff}{\mathrm{Aff\,}}
\newcommand{\lproblem}{\mathcal{P}'}
\newcommand{\PU}{\mathbb{P}}
\newcommand{\PUF}{{\mathbb{P}'}}
\newcommand{\PUFO}{\overline{\mathbb{P}}'}
\newcommand{\U}{\Xi} 
\newcommand{\UE}{\Theta} 
\newcommand{\UF}{\Xi'} 
\newcommand{\UFO}{\overline{\Xi}{}^\star} 
\newcommand{\UFOG}{\overline{\Xi}{}'} 
\newcommand{\UFOV}{\overline{\Xi}} 
\newcommand{\EBOX}{[\underline{\uevec}, \overline{\uevec}]}
\newcommand{\FBOX}{\convex(\fold(\EBOX))}
\newcommand{\EBOXg}{[\underline{\uevec}_g, \overline{\uevec}_g]}
\newcommand{\FBOXg}{\convex(\fold(\EBOXg))}
\newcommand{\uvec}{\boldsymbol{\uscal}} 
\newcommand{\uscal}{\xi} 
\newcommand{\altuvec}{\hat{\uvec}} 
\newcommand{\uevec}{\boldsymbol{\uescal}} 
\newcommand{\uescal}{\theta} 
\newcommand{\ufvec}{\boldsymbol{\xi}'} 
\newcommand{\ufscal}{\xi'} 
\newcommand{\altufvec}{\boldsymbol{\xi}^\star} 
\newcommand{\altufvecT}{\boldsymbol{\xi}^{\star\intercal}} 
\newcommand{\altufscal}{\xi^\star} 
\newcommand{\altaltufvec}{\boldsymbol{\xi}''} 
\newcommand{\altaltufscal}{\xi''} 
\newcommand{\suvec}{\tilde{\uvec}} 
\newcommand{\suscal}{\tilde{\uscal}} 
\newcommand{\sufvec}{\tilde{\uvec}'} 
\newcommand{\upvec}{\boldsymbol{\xi}'} 
\newcommand{\uvecbase}{\boldsymbol{\phi}} 
\newcommand{\uscalbase}{\phi} 
\newcommand{\embed}{\boldsymbol{E}} 
\newcommand{\fold}{F} 
\newcommand{\flatten}{F^+} 
\newcommand{\lift}{L} 
\newcommand{\retr}{R} 
\newcommand{\dist}{d}
\newcommand{\fdist}{d'}
\newcommand{\maxdist}{\overline{\dist}}
\newcommand{\X}{\mathcal{F}}
\newcommand{\XF}{\X'}
\newcommand{\avec}{\boldsymbol{a}}
\newcommand{\bvec}{\boldsymbol{b}}
\newcommand{\cvec}{\boldsymbol{c}}
\newcommand{\dvec}{\boldsymbol{d}}
\newcommand{\svec}{\boldsymbol{s}}
\newcommand{\vvec}{\boldsymbol{v}}
\newcommand{\xvec}{\boldsymbol{x}}
\newcommand{\xfvec}{\boldsymbol{x}'}
\newcommand{\yvec}{\boldsymbol{y}}
\newcommand{\zvec}{\boldsymbol{z}}
\newcommand{\amat}{\boldsymbol{A}}
\newcommand{\bmat}{\boldsymbol{B}}
\newcommand{\smat}{\boldsymbol{S}}
\newcommand{\vmat}{\boldsymbol{V}}
\newcommand{\wmat}{\boldsymbol{W}}
\newcommand{\xmat}{\boldsymbol{X}}
\newcommand{\ymat}{\boldsymbol{Y}}
\newcommand{\zmat}{\boldsymbol{Z}}
\newcommand{\gammavec}{\boldsymbol{\gamma}}
\newcommand{\deltavec}{\boldsymbol{\delta}}
\newcommand{\unitvec}{\boldsymbol{e}}
\newcommand{\nullvec}{\boldsymbol{0}}
\newcommand{\unitytary}{\boldsymbol{1}}
\begin{document}

\maketitle

\begin{abstract}
    Multi-stage decision-making under uncertainty, where decisions are taken under sequentially revealing uncertain problem parameters, is often essential to faithfully model managerial problems. Given the significant computational challenges involved, these problems are typically solved approximately. This short note introduces an algorithmic framework that revisits a popular approximation scheme for multi-stage stochastic programs by \cite{Georghiou2015} and improves upon it to deliver superior policies in the stochastic setting, as well as extend its applicability to robust optimization and a contemporary Wasserstein-based data-driven setting. We demonstrate how the policies of our framework can be computed efficiently, and we present numerical experiments that highlight the benefits of our method.
\end{abstract}

\section{Introduction}

Over the past few decades, three paradigms have gained significant prominence in decision-making under uncertainty: \emph{(i)} \emph{robust optimization}, which assumes that the uncertain problem parameters lie in an uncertainty set, and the decision maker optimizes for the worst-case parameter realizations within this set; \emph{(ii)} \emph{stochastic programming}, which assumes that the uncertain parameters follow a known probability distribution, and the decision maker optimizes a risk measure under this distribution; and \emph{(iii)} \emph{data-driven optimization}, which assumes that historical samples of the uncertain parameters are available, and the decision maker optimizes a risk measure under the worst distribution that is deemed plausible in view of the observed samples. 
Of particular notice are Wasserstein-based data-driven optimization problems, where the set of plausible distributions is defined in terms of their Wasserstein distances to an empirical distribution.

In each of these paradigms, significant progress has been reported for single-stage problems, where all decisions are taken \emph{here-and-now} before any of the uncertain parameters are observed. However, two-stage and multi-stage problems, in which some of the decisions can be taken \emph{wait-and-see} after some or all of the parameter values have been revealed, continue to pose formidable theoretical and computational challenges. Indeed, even linear two-stage problems where the uncertain parameters only occur in the constraints' right-hand sides are known to be strongly NP-hard \citep{BenTal2004}. This negative result has directed much existing and ongoing research towards developing tractable approximations. Constructing \emph{decision rules}, which restrict the dependence of the wait-and-see decisions to low-dimensional function classes of the observed uncertainties, is particularly prominent in this context. Popular classes of decision rules include affine \citep{BenTal2004}, segregate affine \citep{Chen2008, Chen2009, GS10:dro_tractable_approximations}, piecewise constant \citep{Bertsimas2011}, piecewise affine \citep{Vayanos2011, Georghiou2015, BenTal2020, Thomae2024} and polynomial \citep{Bampou2011, Bertsimas2011b} rules, as well as their combinations \citep{Rahal2021}.

This short note builds upon the previous works of \cite{Georghiou2015} and \cite{Bertsimas2023} to develop and analyze a refined class of piecewise affine decision rules for robust, stochastic and data-driven optimization. Our contributions can be summarized as follows.
\begin{enumerate}
    \item We prove that the piecewise affine policies of \cite{Georghiou2015} cannot improve upon simple affine policies in robust problems, and that any improvements in stochastic problems are due to refinements in the objective function, rather than increased flexibility in the wait-and-see decisions. This provides a theoretical explanation of an empirical observation frequently observed in the literature \citep[see, e.g.,][]{Bertsimas2015b, Han2025}. 
    \item We develop a refined framework of piecewise affine policies that addresses this shortcoming and that is applicable to robust, stochastic, and contemporary Wasserstein-based data-driven problems. We show how these policies can be computed efficiently.
    \item We prove strong theoretical approximation bounds under standard assumptions, matching or improving the best-known results in the literature.
    \item We demonstrate through numerical experiments that our framework can yield significant improvements over the piecewise affine policies of \cite{Georghiou2015} as well as the data-driven affine policies of \cite{Bertsimas2023}.
\end{enumerate}

While this short note focuses on refining piecewise affine decision rules within robust, stochastic and data-driven optimization, we acknowledge several alternative paradigms that are widely studied in the broader literature on decision-making under uncertainty. 
Classical approaches rooted in Markov decision processes (MDPs) provide foundational frameworks for sequential decision-making that are extensively documented in the OR/MS literature \citep{Puterman1994}. 
Recent methodological advances include approximate dynamic programming \citep{Powell2011} and reinforcement learning techniques, particularly deep reinforcement learning \citep{FrancoisLavet2018}, which leverage neural network approximations within the MDP framework.
Additionally, alternative data-driven paradigms such as ``predict-then-optimize'' \citep{Bertsimas2020}, imitation learning \citep{Abbeel2004}, and integrated prediction-optimization pipelines in combinatorial optimization \citep{Baty2024} offer promising avenues for linking predictive and prescriptive models. 
However, these methods typically prioritize empirical performance and flexibility at the expense of interpretability, analytical tractability, and rigorous robustness or stochastic guarantees. 
To maintain clarity and brevity, we therefore do not further explore these alternative approaches here.

The remainder of this short note is structured as follows. We introduce our problem setting in Section~\ref{sec:lift}. 
Section~\ref{sec:tightening} develops our framework of decision rules, and Section~\ref{sec:solving_subproblem} shows how the associated policies can be computed efficiently. 
Section~\ref{sec:bounds} establishes theoretical approximation bounds under standard assumptions. 
We report on numerical experiments in Section~\ref{sec:numerical}. All proofs are relegated to the e-companion. Additionally, we provide the sourcecodes and datasets to reproduce the numerical results in the GitHub repository accompanying this paper.%
\ifanonymous
\footnote{\url{https://anonymous.4open.science/r/lifting-based-piecewise-affine-policies/}}
\else
\footnote{\url{https://github.com/tumBAIS/lifting-based-piecewise-affine-policies}}
\fi

\noindent \textbf{Notation.} $\,$
For $n\in\mathbb{N}$, we define $[n] = \{1,\dots,n\}$ and $[n]_0=[n]\cup\{0\}$. Boldface lower-case and upper-case characters $\xvec\in\mathbb{R}^n$ and $\amat\in\mathbb{R}^{m\times n}$ denote vectors and matrices, respectively, and $x_i$ refers to the $i$\textsuperscript{th} element of $\xvec$. 
We denote random variables by a tilde (e.g., $\tilde{x}$) and deterministic quantities, including realizations of random variables, without a tilde. 
We use $(x)_+$ to abbreviate $\max\{x, 0\}$. The vectors $\unitvec_i$, $\nullvec$, and $\unitvec$ represent the unit standard basis vector, the vector of all zeros and the vector of all ones, respectively. Depending on the context, $\unitytary$ acts as the identity matrix or the identity function. For $a, b \in \mathbb{R}$, $a \leq b$, $[a,b]$ is the closed interval from $a$ to $b$, and for $\avec, \bvec \in \mathbb{R}^n$, $\avec\leq \bvec$, $[\avec,\bvec]$ is the hyperrectangle $\{\xvec\in\mathbb{R}^n\,\colon\, \avec\leq\xvec\leq\bvec\}$. We use $\amat^+=(\amat^\intercal \amat)^{-1} \amat^\intercal$ and $F^+$ as the left Moore-Penrose inverse of a matrix $\amat$ and the left-inverse of an injective function $F$, respectively. For a family of sets $\U = \{\U_g\}_{g\in[\nums]}$, we use $\U$ to denote both the family and the union $\U=\bigcup_{g\in[\nums]}\U_g$ of its sets; which interpretation is used will be clear from the context.

\section{Piecewise Affine Policies via Liftings}
\label{sec:lift}

We study multi-stage stochastic programs of the form
\begin{equation}
\tag{$\baseproblem (\PU, \U)$}
\label{eq:P}
\begin{aligned}
    \minimize_{\xvec\in\X} \quad& \mathbb{E}_\PU \left[ \cvec(\suvec)^\intercal\xvec(\suvec)\right]\\
    \st \quad& \amat_g \xvec(\uvec) \leq \bvec_g(\uvec) && \forall g\in[\nums], \uvec\in \U_g,
\end{aligned}  
\end{equation}
where $\suvec$ is a random vector of problem parameters that are supported on the family of compact sets $\U = \{ \U_g \}_{g \in [\nums]}$ and are governed by the probability distribution $\mathbb{P}$. 
The objective coefficients $\cvec (\cdot)$ and constraint right-hand sides $\bvec_g (\cdot)$ are affine in $\uvec$, and separate constraint sets need to be satisfied for each subset $\U_g$ of the support.
The problem parameters exhibit the temporal structure $\uvec = (\uvec_1, \ldots, \uvec_T)$, where $\uvec_t$ is observed at the beginning of time stage $t \in [T]$ and where $\uvec^t = (\uvec_1, \ldots, \uvec_t)$ comprises the parameters observed up until (and including) stage $t$.  
Likewise, the decisions satisfy $\xvec (\cdot) = (\xvec_0, \xvec_1 (\cdot), \ldots, \xvec_T (\cdot))$, where $\xvec_0$ comprises the here-and-now decisions and where the recourse decisions $\xvec_t$ must only depend on the parameter values $\uvec^t$ observed previously. 
This non-anticipativity requirement is captured by the constraint $\xvec\in\X$. 
To avoid confusion, we emphasize that $\uvec_t$ and $\xvec_t$ are subvectors of $\uvec$ and $\xvec$, whereas the scalars $\uscal_i$ and $x_i$ refer to the $i$\textsuperscript{th} elements of $\uvec$ and $\xvec$, respectively.

Problem~\ref{eq:P} unifies several commonly studied problem formulations from the literature.

\begin{example}[Stochastic Programming]
\label{ex:SP}
    Multi-stage stochastic programs of the form
    \begin{equation*}
    \begin{aligned}
        \minimize_{\xvec\in\X} \quad& \mathbb{E}_\mathbb{P} \left[ \cvec(\suvec)^\intercal\xvec(\suvec)\right]\\
        \st \quad& \amat \xvec(\suvec) \leq \bvec(\suvec) && \PU \pas
    \end{aligned}  
    \end{equation*}
    constitute a special case of problem~\ref{eq:P} where $G = 1$, $\U_1$ is the support of $\PU$, that is, the smallest Borel-measurable closed set satisfying $\PU(\U_1) = 1$, and $(\amat_1, \bvec_1) = (\amat, \bvec)$. \hfill $\Diamond$
\end{example}

\begin{example}[Robust Optimization]
\label{ex:RP}
    Multi-stage robust optimization problems of the form
    \begin{equation*}
    \begin{aligned}
        \minimize_{\xvec\in\X} \quad& \max_{\uvec\in \Psi} \; \cvec^\intercal\xvec(\uvec) \\
        \st \quad& \amat \xvec(\uvec) \leq \bvec(\uvec) && \forall \uvec \in\Psi
    \end{aligned}  
    \end{equation*}
    emerge as an instance of problem~\ref{eq:P} where $G = 1$, $\U_1 = \Psi$, and $\mathbb{P}$ is any distribution supported on $\Psi$, if we (i) augment the here-and-now decisions $\xvec_0$ by an epigraphical variable $\tau \in \mathbb{R}$ that is minimized in the objective, and (ii) identify $(\amat_1, \bvec_1)$ with $(\amat, \bvec)$ augmented by the epigraphical constraint $\cvec^\intercal\xvec(\uvec) - \tau \leq 0$. \hfill $\Diamond$
\end{example}

\begin{example}[Data-Driven Optimization]
\label{ex:DP}
    The multi-stage data-driven optimization problem
    \begin{equation*}
    \begin{aligned}
        \minimize_{\xvec\in\X} \quad& \sup_{\PU \in \mathcal{B}_\epsilon(\hat{\PU})} \;  \mathbb{E}_{\PU}\left[\cvec^\intercal\xvec(\suvec)\right] \\
        \st \quad& \amat \xvec(\suvec) \leq \bvec(\suvec) && \PU \pas \; \forall \PU \in \mathcal{B}_\epsilon(\hat{\PU})
    \end{aligned}  
    \end{equation*}
    considers the type-$\infty$ Wasserstein ball $\mathcal{B}_\epsilon(\hat{\PU}) = \{ \mathbb{P} \in \mathfrak{P}(\mathbb{R}^\numu) \, \colon \, \mathrm{d}^\mathrm{W}_\infty (\mathbb{P}, \hat{\PU}) \leq \epsilon \}$ with
    Borel probability measures $\mathfrak{P}(\mathbb{R}^\numu)$ on $\mathbb{R}^\numu$, Wasserstein distance
    \begin{equation*}
        \mathrm{d}^\mathrm{W}_\infty (\mathbb{P}, \mathbb{Q}) = \left\{ \Pi\text{-ess sup} \norm{\suvec - \tilde{\bm{\psi}}} \, \colon \, \left[ \begin{array}{c} \text{$\Pi$ is a joint distribution over $\suvec$ and $\tilde{\bm{\psi}}$} \\ \text{with marginals $\mathbb{P}$ and $\mathbb{Q}$} \end{array} \right] \right\},
    \end{equation*}
    and ground metric $\norm{\cdot}$, where the empirical distribution $\hat{\PU} = \frac{1}{\nums} \sum_{g \in [\nums]} \hat{\uvec}_g$ is supported on the historical samples $\hat{\uvec}_1, \dots, \hat{\uvec}_\nums$ \citep[see, e.g.,][]{Esfahani2018, Blanchet2019, Gao2023}. 
    \citet{Bertsimas2023} show that this problem admits the equivalent reformulation
    \begin{equation*}
    \begin{aligned}
        \minimize_{\xvec\in\X} \quad& \frac{1}{\nums}\sum_{g\in[\nums]} \max_{\uvec\in\U_g} \; \cvec^\intercal \xvec(\uvec) \\
        \st \quad& \amat \xvec(\uvec) \leq \bvec(\uvec) && \forall g\in[\nums], \uvec\in\U_g,
    \end{aligned}
    \end{equation*}
    where $\U_g = \{\uvec \, \colon \, \lVert \uvec - \hat{\uvec}_g \rVert \leq \epsilon\}$ are perturbation sets around the historical samples. The problem therefore emerges as a special case of~\ref{eq:P}, where $\mathbb{P}$ is any distribution supported on $\U$, if we (i) augment the here-and-now decisions $\xvec_0$ by the epigraphical variables $\tau_g \in \mathbb{R}$ whose average is minimized in the objective, and (ii) identify each $(\amat_g, \bvec_g)$, $g \in [G]$, with $(\amat, \bvec)$ augmented by the epigraphical constraint $\cvec^\intercal\xvec(\uvec) - \tau_g \leq 0$. \hfill $\Diamond$
\end{example}

Problem~\ref{eq:P} is NP-hard already when its objective function is deterministic, $T=2$, $\nums = 1$ and $\U_1$ is polyhedral \citep{G02:immunized_solutions}. 
A widely applied conservative approximation replaces the wait-and-see decisions of $\xvec (\uvec)$ with affine decision rules, that is, it imposes an affine dependence of $\xvec_t (\uvec^t)$ on $\uvec^t$ for all $t \in [T]$ \citep{BenTal2004, Kuhn2011}. 
Over the last two decades, this approximation has been refined to different classes of segregated affine \citep{Chen2008, Chen2009, GS10:dro_tractable_approximations} and piecewise affine \citep{Vayanos2011, Georghiou2015} decision rules. 
Here, the key idea is to lift the uncertain parameters $\uvec\in\mathbb{R}^\numu$ to a higher dimensional parameter vector $\upvec\in\mathbb{R}^\numuf_+$, and subsequently solve an affine decision rule approximation in the lifted space, which corresponds to a nonlinear decision rule approximation in the original space. 
To do so, we first select embedding matrices $\embed_t$ that map the original uncertainties $\uvec^t$ up to stage $t$ into stage-$t$ uncertainties $\uevec_t = \embed_t \uvec^t$, $t \in [T]$. 
We require each embedding $\embed^t=(\embed_1 \cdots \embed_t)$ up to stage $t$ to be information-preserving, that is, to be an injective map of $\uvec^t$ to $\uevec^t = (\uevec_1, \ldots, \uevec_t)$. 
Typical embeddings include rotations, scalings, averages and differences. 
The uncertain parameters $\uevec = \uevec^T$ then reside in the embedded support $\UE = \embed \U = \{ \embed \uvec \, \colon \, \uvec \in \U \}\subseteq\mathbb{R}^\numue$, where $\embed = \embed^T$. 
Next, we select for each scalar component $\uescal_i$ of $\uevec$ the $\numuef_i - 1$ breakpoints
$$
\min_{\uevec \in \UE} \; \uescal_i = \;
\underline{\uescal}_i = z_{i0} < z_{i1} < z_{i2} < \dots < z_{i,\numuef_i-1} <  z_{i\numuef_i} = \overline{\uescal}_i
\; = \max_{\uevec \in \UE} \; \uescal_i.
$$
Here, $z_{i0}$ and $z_{i\numuef_i}$ are the lower and upper bounds of the embedded support $\UE$ and not breakpoints. We then define the nonlinear folding operator $\fold$ component-wise via
\begin{align*}
&\fold_{ij}(\uescal_i) = \begin{cases}
    0 & \case \uescal_i < z_{i,j-1}, \\
    \uescal_i - z_{i,j-1} & \case \uescal_i \in [z_{i,j-1}, z_{ij}], \\
    z_{ij} - z_{i,j-1} & \case \uescal_i > z_{ij}
\end{cases}
& \forall i\in [\numue], j\in[\numuef_i].
\end{align*}

$\fold$ piecewise linearly folds each component $\uescal_i$ to $\numuef_i$ lifted components in a staggered manner. 
We denote by $\fold_i(\uescal_i)=(\fold_{i1}(\uescal_i),\dots,\fold_{i\numuef_i}(\uescal_i))\in\mathbb{R}^{\numuef_i}$ the subvector of $\fold(\uevec)$ corresponding to the folding of component $\uescal_i$. 
Geometrically, $\fold_i$ maps $\uescal_i$ to an edge of the $\numuef_i$-dimensional hyperrectangle spanned by the folded breakpoints. 
Figure~\ref{fig:visualization:fold} visualizes both of these interpretations.
By construction, we have $\underline{\uescal}_i + \sum_{j'\in[j]} \fold_{ij'}(\uescal_i) = \min\{\uescal_i, z_{ij}\}$ for all $i\in[\numue]$ and $j\in[\numuef_i]$. 
The lifted uncertainty realizations $\upvec = \fold (\uevec)$ reside in the lifted support $\UF = \fold (\UE) = \{ \fold (\uevec) \, \colon \, \uevec \in \UE \}$. 
We define the partial foldings $\upvec_t = \fold_t (\uevec_t)$ and $(\upvec)^t = \fold^t (\uevec^t)$ in the canonical way. 
Note that the embedding and folding operators have the left-inverses $\embed^+= (\embed^\intercal \embed)^{-1} \embed^\intercal$ and $\fold^+$ defined via $\fold^+_i(\ufvec) = \underline{\uescal}_i + \sum_{j \in [\numuef_i]} \ufscal_{ij}$, respectively. 
We combine the operators and their left-inverses to the lifting operator $\lift = \fold \circ \embed$ and the retraction operator $\retr = \embed^+ \circ \fold^+$, respectively. 
Finally, we define the lifted probability distribution $\PUF$ via $\PUF(S')=\PU(\{\uvec\in\U \,\colon\, \lift(\uvec)\in S'\})$ for all Borel-measurable sets $S'\in \mathfrak{B}(\mathbb{R}^\numuf)$, where $\mathfrak{B}(\mathbb{R}^\numuf)$ is the Borel $\sigma$-algebra on $\mathbb{R}^\numuf$.

\begin{figure}[htb]
    \centering
    \begin{subfigure}[t]{0.48\textwidth}
        \begin{tikzpicture}
\color{black}
\begin{scope}[shift={(0,0)}, scale=.7]

    \coordinate (e-origin) at (-.5, -1.3);
    \coordinate (e-end) at (8, -1.3);

    \coordinate (e-z0) at (0, -1.3);
    \coordinate (e-z1) at (2.5, -1.3);
    \coordinate (e-z2) at (5, -1.3);
    \coordinate (e-z3) at (7.5, -1.3);

    \coordinate (e-vec) at (6.5, -1.3);
    \coordinate (f-scal1) at (6.5, 0.2);
    \coordinate (f-scal2) at (6.5, 1.9);
    \coordinate (f-scal3) at (6.5, 3);

    \draw[thick, ->] (e-origin) -- (e-end) node[right] {$\uescal_{i}$};

    \draw (e-z0) ++(0,.2) -- ++(0,-.4) node[below] {$z_{i0}$};
    \draw (e-z1) ++(0,.2) -- ++(0,-.4) node[below] {$z_{i1}$};
    \draw (e-z2) ++(0,.2) -- ++(0,-.4) node[below] {$z_{i2}$};
    \draw (e-z3) ++(0,.2) -- ++(0,-.4) node[below] {$z_{i3}$};
    \draw (e-vec) ++(0,.2) -- ++(0,-.4) node[below] {$\uescal_{i}$};

    \draw[fill=black] (e-vec) circle (.08);

    \foreach \x in {1,...,3} {
    \begin{scope}[shift={(0,1.7*\x-3)}]
    \draw[thick] (-.5,0) -- (2.5*\x-2.5,0) -- (2.5*\x,1.5) -- (8,1.5);
    \draw[->] (-.5,0) -- (-.5,1.6) node[below left] {$\ufscal_{i\x}$};
    \draw[thick, ->] (-.5,0) -- (8,0);
    \end{scope}
    }
    \foreach \x in {0,...,3} {
    \draw[dashed, opacity=.3] (e-z\x) --++ (0,5);
    }
    \draw[dotted] (e-vec) -- (f-scal3);
    \foreach \x in {1,...,3} {
    \draw[fill=black] (f-scal\x) circle (.08);
    \draw (f-scal\x) node[below] {\quad$\fold_{i\x}(\uescal_i)$};
    }
\end{scope}
\end{tikzpicture}
        \subcaption{Interpreting $\fold_i$ as staggered piecewise affine functions.}
        
    \end{subfigure}\hspace{.02\textwidth}
    \begin{subfigure}[t]{0.48\textwidth}
        \begin{tikzpicture}
\color{black}
\begin{scope}[shift={(8,0)}, scale=.7]

    \coordinate (f-origin) at (0,0,0);
    \coordinate (f-u1-end) at (3.2,0,0);
    \coordinate (f-u2-end) at (0,3.2,0);
    \coordinate (f-u3-end) at (0,0,-3.2);
    
    \coordinate (f-z0) at (f-origin);
    \coordinate (f-z1) at (2.5,0,0);
    \coordinate (f-z2) at (2.5,2.5,0);
    \coordinate (f-z3) at (2.5, 2.5,-2.5);

    \coordinate (f-z2-1) at (0,2.5,0);
    \coordinate (f-z3-1) at (0,2.5,-2.5);
    \coordinate (f-z3-2) at (2.5,0,-2.5);
    \coordinate (f-z3-12) at (0,0,-2.5);

    \coordinate (f-vec) at (2.5,2.5,-1.5);

    \coordinate (e-origin) at (-.5, -1.3);
    \coordinate (e-end) at (8, -1.3);

    \coordinate (e-z0) at (0, -1.3);
    \coordinate (e-z1) at (2.5, -1.3);
    \coordinate (e-z2) at (5, -1.3);
    \coordinate (e-z3) at (7.5, -1.3);

    \coordinate (e-vec) at (6.5, -1.3);

    \draw[thick, ->] (e-origin) -- (e-end) node[right] {$\uescal_{i}$};
 
    \draw[thick,->] (f-origin) -- (f-u1-end) node[anchor=north] {$\ufscal_{i1}$};
    \draw[thick,->] (f-origin) -- (f-u2-end) node[anchor=north east] {$\ufscal_{i2}$};
    \draw[thick,->] (f-origin) -- (f-u3-end) node[anchor=south ] {$\ufscal_{i3}$};

    \draw (e-z0) ++(0,.2) -- ++(0,-.4) node[below] {$z_{i0}$};
    \draw (e-z1) ++(0,.2) -- ++(0,-.4) node[below] {$z_{i1}$};
    \draw (e-z2) ++(0,.2) -- ++(0,-.4) node[below] {$z_{i2}$};
    \draw (e-z3) ++(0,.2) -- ++(0,-.4) node[below] {$z_{i3}$};
    \draw (e-vec) ++(0,.2) -- ++(0,-.4) node[below] {$\uescal_{i}$};

    \draw[line width=4, draw opacity=.3] (e-z0) -> (e-vec);
    \draw[fill=black] (e-vec) circle (.08);
    
    \draw[dashed, opacity=.3] (f-z1) -- (f-z2) -- (f-z2-1);
    \draw[dashed, opacity=.3] (f-z3-1) -- (f-z3) -- (f-z3-2) -- (f-z3-12) -- (f-z3-1);
    \draw[dashed, opacity=.3] (f-z2) -- (f-z3);
    \draw[dashed, opacity=.3] (f-z1) -- (f-z3-2);
    \draw[dashed, opacity=.3] (f-z2-1) -- (f-z3-1);

    \draw[line width=4, draw opacity=.3] (f-z0) -- (f-z1) -- (f-z2) -- (f-vec);
    \draw[fill=black] (f-vec) circle (.08);

    \draw[dotted, ->, opacity=.5, thick, shorten >=3pt] (e-z0) to node[midway, left, opacity=1] {$\fold_i(z_{i0})$} (f-z0);
    \draw[dotted, ->, opacity=.5, thick, shorten >=3pt] (e-z1) to node[midway, left, opacity=1] {$\fold_i(z_{i1})$} (f-z1);
    \draw[dotted, ->, opacity=.5, thick, shorten >=3pt] (e-z2) to[bend right] node[pos=.4, left, opacity=1] {$\fold_i(z_{i2})$} (f-z2);
    \draw[dotted, ->, opacity=.5, thick, shorten >=3pt] (e-z3) to[bend right] node[pos=.65, right, opacity=1] {$\fold_i(z_{i3})$} (f-z3);
    
    \draw[dotted, ->, opacity=.5, thick, shorten >=3pt] (e-vec) to[bend right] node[pos=.5, left, opacity=1] {$\fold_i(\uescal_i)$} (f-vec);
\end{scope}
\end{tikzpicture}
        \subcaption{Interpreting $\fold_i$ as mapping to edges of the hyperrectangle spanned by the folded breakpoints.}
    \end{subfigure}
    \caption{Visualization of the piecewise affine folding function $\fold_i$.} 
    \label{fig:visualization:fold}
\end{figure}

Using the lifted probability distribution $\PUF$, the lifted support subsets $\UF_g = \lift(\U_g)$ and the retraction operator $\retr$, we define the lifted problem
\begin{equation}
\tag{$\lbaseproblem (\PUF, \UF)$}
\label{eq:LP}
\begin{aligned}
    \minimize_{\xfvec\in\XF} \quad& \mathbb{E}_\PUF \left[ \cvec(\retr(\sufvec))^\intercal\xfvec(\sufvec)\right]\\
    \st \quad& \amat_g \xfvec(\ufvec) \leq \bvec_g(\retr(\ufvec)) && \forall g\in[\nums], \ufvec\in \UF_g,
\end{aligned}  
\end{equation}
where now $\UF = \{ \UF_g \}_{g \in [G]}$, and $\XF$ encodes the non-anticipativity requirement in the lifted space. 
Since the embedding and folding operators preserve information, \ref{eq:LP} is equivalent to \ref{eq:P} if both problems are solved in general decision rules, and \ref{eq:LP} is an at least as tight conservative approximation as \ref{eq:P} if both problems are solved in affine decision rules. 
For the remainder of the paper, we refer to the affine decision rule approximations of \ref{eq:P} and \ref{eq:LP} as $\aff\baseproblem (\PU, \U)$ and $\aff\lbaseproblem (\PUF, \UF)$, respectively.

Unfortunately, the representation of the lifted support subsets $\UF_g$ typically scales exponentially in the representation of the original sets $\U_g$, which renders $\aff\baseproblem (\PUF, \UF)$ intractable in general. To this end, \citet{Georghiou2015} study the outer approximations
\begin{align}
\label{eq:outer_general}
\UFOG_g \;=\;  
\left\{ 
\ufvec \, \colon \,
\fold^+ (\ufvec) \in\UE_g \right\} \;\cap\; 
\FBOXg,
\qquad g\in[\nums],
\end{align}
of the lifted support subsets $\UF_g$ for the special case when $G=1$. Here, $[\underline{\uevec}_g, \overline{\uevec}_g]\subseteq [\underline{\uevec}, \overline{\uevec}]$ is the smallest hyperrectangle containing $\UE_g = \embed \U_g$. Intuitively, the first set in~\eqref{eq:outer_general} relaxes the sufficient condition $\ufvec\in\UF_g=\fold(\UE_g)$ to the necessary condition $\fold^+(\ufvec)\in\fold^+\circ\fold(\UE_g) = \UE_g$. 
Since $\fold^+$ is affine, this set is efficiently representable whenever $\UE_g$ is. The second set in~\eqref{eq:outer_general} is the convex hull of the folded bounding box of the embedded support, which has a polyhedral representation as per the following generalization of \citet[Lemma~4.3]{Georghiou2015}.

\begin{lemma}
The convex hull of the folded bounding box of the embedded support satisfies
\label{lem:rectangular_hull}
\begin{equation*}
\FBOXg
=
\left\{
\begin{aligned}
\ufvec \,\colon\,&
\fold_{ij}(\underline{\uescal}_{gi}) \leq \ufscal_{ij} \leq \fold_{ij}(\overline{\uescal}_{gi}) \;&&\forall i\in[\numue],j\in[\numuef_i]
\\
&\frac{\ufscal_{ij} - \fold_{ij}(\underline{\uescal}_{gi})}
{\fold_{ij}(\overline{\uescal}_{gi}) - \fold_{ij}(\underline{\uescal}_{gi})}
\geq
\frac{\ufscal_{i,j+1} - \fold_{i,j+1}(\underline{\uescal}_{gi})}
{\fold_{i,j+1}(\overline{\uescal}_{gi}) - \fold_{i,j+1}(\underline{\uescal}_{gi})}
\; &&\begin{aligned}
    &\forall i\in[\numue],j\in[\numuef_i] \\[-1ex]
    &\mathrm{ with }\, \underline{\uescal}_{gi} < z_{ij} < \overline{\uescal}_{gi}
\end{aligned}
\end{aligned}
\right\}.
\end{equation*}
\end{lemma}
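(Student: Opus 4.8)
The plan is to reduce the claim to a one‑dimensional statement per coordinate $i\in[\numue]$ and then to recognise the resulting set as an order simplex. The folding operator acts componentwise, $\fold(\uevec)=(\fold_i(\uescal_i))_{i\in[\numue]}$ with $\fold_i(\uescal_i)=(\fold_{i1}(\uescal_i),\dots,\fold_{i\numuef_i}(\uescal_i))$, and the bounding box factorises as $\EBOXg=\prod_{i\in[\numue]}[\underline{\uescal}_{gi},\overline{\uescal}_{gi}]$. Hence $\fold(\EBOXg)=\prod_{i\in[\numue]}\fold_i([\underline{\uescal}_{gi},\overline{\uescal}_{gi}])$, a product of compact sets, and since the convex hull of a Cartesian product equals the product of the convex hulls, I would obtain $\FBOXg=\prod_{i}\convex(\fold_i([\underline{\uescal}_{gi},\overline{\uescal}_{gi}]))$. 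Because the listed inequalities couple only indices sharing the same $i$, the candidate polyhedron factorises across $i$ as well, so it suffices to prove, for each fixed $i$, that $\convex(\fold_i([\underline{\uescal}_{gi},\overline{\uescal}_{gi}]))$ equals the corresponding single‑coordinate polyhedron. I would drop the index $i$ for what follows.

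For the inclusion $\subseteq$ I would verify that every folded point $\fold(\uescal)$, $\uescal\in[\underline{\uescal},\overline{\uescal}]$, satisfies all constraints; convexity of the constraint set then yields the inclusion. The box constraints hold because each $\fold_{j}$ is nondecreasing in $\uescal$. For a monotonicity constraint, writing $s_j(\uescal)$ for the normalised quantity $(\fold_j(\uescal)-\fold_j(\underline{\uescal}))/(\fold_j(\overline{\uescal})-\fold_j(\underline{\uescal}))$, the hypothesis $\underline{\uescal}<z_{j}<\overline{\uescal}$ guarantees that the relevant denominators are positive, so $s_j$ and $s_{j+1}$ are well defined. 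A short case distinction then gives $s_j(\uescal)\ge s_{j+1}(\uescal)$: if $\uescal\le z_{j}$ then $s_{j+1}(\uescal)=0\le s_j(\uescal)$, and if $\uescal> z_{j}$ then $s_j(\uescal)=1\ge s_{j+1}(\uescal)$, reflecting that coordinate $j$ saturates before coordinate $j+1$ begins.

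The reverse inclusion $\supseteq$ is the crux. Here I would split the coordinates into those that are fixed over $[\underline{\uescal},\overline{\uescal}]$, where $\fold_j(\underline{\uescal})=\fold_j(\overline{\uescal})$ so the box constraints pin $\ufscal_{j}$ to the single value taken all along the curve, and the contiguous block $j_0,\dots,j_1$ of active coordinates, on which I pass to the normalised coordinates $s_j$ by the invertible affine change of variables above. In these coordinates the box constraints read $0\le s_j\le 1$ and the interior monotonicity constraints read $s_{j_0}\ge s_{j_0+1}\ge\dots\ge s_{j_1}$, so the candidate set becomes the order simplex $\{\,1\ge s_{j_0}\ge\dots\ge s_{j_1}\ge 0\,\}$. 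This simplex is the convex hull of the staircase vectors $(1,\dots,1,0,\dots,0)$, and I would check that each such vector is the normalised image under $\fold$ of a breakpoint $z_{m}$ or an endpoint of $[\underline{\uescal},\overline{\uescal}]$: at $\uescal=z_m$ the coordinates up to $m$ are saturated while the rest vanish, and the fixed coordinates take their forced values. Thus every vertex of the candidate polyhedron is a genuine folded point, whence the polyhedron lies in $\convex(\fold([\underline{\uescal},\overline{\uescal}]))$.

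Combining the two inclusions settles the single‑coordinate case, and the factorisation of the first step lifts it to the full statement. The main obstacle is the $\supseteq$ direction, namely confirming that the listed inequalities are not merely necessary but actually sufficient to describe the hull; the reduction to an order simplex through the normalised coordinates is precisely what makes the vertex enumeration transparent. The remaining care is bookkeeping around degenerate configurations, where $\underline{\uescal}$ or $\overline{\uescal}$ coincides with a breakpoint $z_{j}$ so that a coordinate neither starts nor saturates strictly inside the interval; this is exactly why the monotonicity constraints are imposed only for strictly interior breakpoints $\underline{\uescal}_{gi}<z_{ij}<\overline{\uescal}_{gi}$, and I would verify that in such boundary situations the omitted constraints are already implied by the box constraints.
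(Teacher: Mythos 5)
Your proposal is correct, and it reaches the conclusion by a genuinely different route for the hard direction. The paper also proves the inclusion $\FBOXg \subseteq \UFOT$ (the right-hand polyhedron) exactly as you do—monotonicity of $\fold_{ij}$ for the box constraints and the same two-case saturation argument for the ratio constraints—but for the reverse inclusion it never reduces to one dimension explicitly: it directly constructs, for every $\ufvec$ in the polyhedron, convex weights $\lambda(\zvec)=\prod_{i}\lambda_i(z_i)$ over the grid $\hat{Z}$ of breakpoints and box endpoints, where each $\lambda_i$ is a telescoping difference of the normalised coordinates, and then verifies by induction over the dimension $\numue$ that the weights sum to one and reproduce $\ufvec$ componentwise through a four-case analysis. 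Your argument replaces this construction with two structural facts: the convex hull of a Cartesian product is the product of the convex hulls (which, together with the observation that the constraints couple only indices sharing the same $i$, factorises everything across coordinates), and the one-dimensional candidate set is an order simplex whose vertices are the staircase vectors, each of which is a folded breakpoint or endpoint. In effect the paper's weights $\lambda_i$ are precisely the barycentric coordinates with respect to your staircase vertices, so the two proofs perform the same computation; yours buys conceptual economy (no induction, no explicit weight bookkeeping) at the price of invoking the product-hull identity and the vertex description of the order polytope of a chain, while the paper's is fully self-contained and constructive, exhibiting explicit convex multipliers. The one point you should make airtight is the step identifying the active block's constraint system with the full chain $1\geq s_{j_0}\geq\dots\geq s_{j_1}\geq 0$: you need that a ratio constraint is imposed for \emph{every} consecutive pair inside the active block (equivalently, that $j$ and $j+1$ both active forces $\underline{\uescal}_{gi}<z_{ij}<\overline{\uescal}_{gi}$), and conversely that no imposed constraint involves a fixed coordinate, whose normalised variable is undefined; both follow from monotonicity of the breakpoints, and your closing remark on degenerate configurations is exactly where this belongs.
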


Lemma~4.3 of \cite{Georghiou2015} relies on the assumption that the bounding box $[\underline{\uevec}_1, \overline{\uevec}_1]=\EBOX$ aligns with the breakpoints, which implies that $\FBOX$ coincides with the convex closure of the vertices of the hyperrectangles visualized in Figure~\ref{fig:visualization:fold}~(b). 
This does not hold for $G>1$, which is covered by Lemma~\ref{lem:rectangular_hull} above, however, since in that case the bounding boxes $[\underline{\uevec}_g, \overline{\uevec}_g]$ may not be aligned with the breakpoints. 
As a consequence, the bounds $\underline{\uevec}_g, \overline{\uevec}_g$ may no longer be lifted to vertices of the hyperrectangles, but instead onto their edges. 
We elaborate on this in Appendix~\ref{appendix:proof:lem:rectangular_hull}.

\begin{figure}[htb]
    \centering
    \begin{tikzpicture}

    \coordinate (e-origin) at (-.5, -1.3);
    \coordinate (z0) at (0, -1.3);
    \coordinate (zl) at (.7, -1.3);
    \coordinate (z1) at (2.5, -1.3);
    \coordinate (zu) at (4.4, -1.3);
    \coordinate (z2) at (5, -1.3);
    \coordinate (e-end) at (5.7, -1.3);

    \draw[->] (e-origin) -- (e-end) node[right] {$\uescal_{i}$};
    \draw (z0) ++(0,.2) -- ++(0,-.4) node[below] {$z_{i0}$};
    \draw (zl) ++(0,.2) -- ++(0,-.4) node[below] {$\underline{\uescal}_{gi}$};
    \draw (z1) ++(0,.2) -- ++(0,-.4) node[below] {$z_{i1}$};
    \draw (zu) ++(0,.2) -- ++(0,-.4) node[below] {$\overline{\uescal}_{gi}$};
    \draw (z2) ++(0,.2) -- ++(0,-.4) node[below] {$z_{i2}$};
    \draw[line width=4, green, draw opacity=.4] (zl) -- (zu);
    
    \coordinate (origin) at (0,0);
    \coordinate (u1-end) at (3.2,0);
    \coordinate (u2-end) at (0,3.2);
    \coordinate (z1-begin) at (2.5,0);
    \coordinate (z2-begin) at (0,2.5);
    \coordinate (z12-end) at (2.5,2.5);
    \coordinate (ll) at (.7, 0);
    \coordinate (lu) at (2.5, 1.9);

    \draw (origin) node[left] {$0$};
    \draw[->] (origin) -- (u1-end) node[right] {$\ufscal_{i1}$};
    \draw[->] (origin) -- (u2-end) node[above] {$\ufscal_{i2}$};
    \draw[dashed] (z12-end)++(0,.7) node[above] {$z_{i1}-z_{i0}$} -- (z1-begin) ;
    \draw[dashed] (z12-end)++(.7,0) node[right] {$z_{i2}-z_{i1}$} -- (z2-begin);
    \fill[fill=black, fill opacity=.2] (ll) -- (z1-begin) -- (lu) -- cycle;
    \draw[line width=4, green, draw opacity=.4] (ll) -- (z1-begin) -- (lu);
    \draw[dotted, very thick] (.43,-.285) -- (2.86,2.28);

    \draw[dotted, ->, opacity=.5, thick, shorten >=3pt] (zl) to node[midway, left, opacity=1] {$\fold_i(\underline{\uescal}_{gi})$} (ll);
    \draw[dotted, ->, opacity=.5, thick, shorten >=3pt] (z1) -- (z1-begin) node[midway, right, opacity=1] {$\fold_i(z_{i1})$};
    \draw[dotted, ->, opacity=.5, thick, shorten >=3pt] (zu) to[bend right] node[midway, right, opacity=1] {$\fold_i(\overline{\uescal}_{gi})$} (lu);
\end{tikzpicture}
    \caption{
    Visualization of $\FBOXg$. The embedded support subset $[\underline{\uescal}_{gi},\overline{\uescal}_{gi}]$ (green line segment on the abscissa) is folded into a two-dimensional non-convex set (two perpendicular green line segments). The convex hull of $\fold ([\underline{\uescal}_{gi},\overline{\uescal}_{gi}])$ is shown as a grey shaded set that is bounded by the inequality $\frac{\ufscal_{i1} - \fold_{i1}(\underline{\uescal}_{gi})}
{\fold_{i1}(\overline{\uescal}_{gi}) - \fold_{i1}(\underline{\uescal}_{gi})}
\geq
\frac{\ufscal_{i2} - \fold_{i2}(\underline{\uescal}_{gi})}
{\fold_{i2}(\overline{\uescal}_{gi}) - \fold_{i2}(\underline{\uescal}_{gi})}$ (dotted line).
    }
    \label{fig:oute\numuef_intuition}
\end{figure}

Since both sets on the right-hand side of~\eqref{eq:outer_general} constitute outer approximations of $\UF_g$, we conclude that $\UFOG_g$ indeed contains $\UF_g$. Replacing the lifted support subsets $\UF_g$ with the outer approximations $\UFOG_g$ thus results in a conservative approximation of problem~\ref{eq:LP}. Note that by linearity of $\fold^+$ and $\embed$, each outer approximation $\UFOG_g$ is convex whenever $\U$ is. 
Figure~\ref{fig:oute\numuef_intuition} visualizes the convex hull of the lifted bounding box $\FBOXg$ for one dimension $i\in[\numue]$.
 
Since $\retr$ is affine, every feasible solution $\xvec$ to $\aff \baseproblem (\PU, \U)$ induces a feasible solution $\xfvec = \xvec \circ \retr$ to $\aff\lbaseproblem (\PUF, \UFOG)$, where $\UFOG = \{ \UFOG_g \}_{g \in [G]}$, with the same objective value. 
Thus, $\aff\lbaseproblem (\PUF, \UFOG)$ is a weakly tighter approximation of \ref{eq:P} than $\aff \baseproblem (\PU, \U)$. 
\citet{Georghiou2015} observed significant improvements of $\aff \lbaseproblem(\PUF, \UFOG)$ over $\aff \baseproblem(\PU, \U)$ in stochastic programs (\emph{cf.}~Example~\ref{ex:SP}). 
However, previous works did not observe any improvements of $\aff \lbaseproblem (\PUF, \UFOG)$ over $\aff \baseproblem(\PU, \U)$, when applying the outer approximation~\eqref{eq:outer_general} to the robust setting (\emph{cf.}~Example~\ref{ex:RP}), see, e.g., \cite{Bertsimas2015b, Han2025}. 
We next provide the theoretical justification for these observations.
\begin{theorem}
\label{thm:negative_SP}

For $\nums = 1$, there exists a subset $\overline{\Psi}{}'\subseteq\UFOG$ with $\retr(\overline{\Psi}{}') = \U$ such that every feasible solution $\xfvec$ in $\aff\lbaseproblem(\PUF, \UFOG)$ has a corresponding feasible solution $\xvec$ in $\aff\baseproblem(\PU, \U)$ with $\xfvec(\ufvec) = \xvec(\retr(\ufvec))$ for all $\ufvec\in\overline{\Psi}{}'$.
\end{theorem}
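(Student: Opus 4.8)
The plan is to exhibit a single affine section of the retraction operator $\retr$ whose image is contained in the outer approximation $\UFOG$, and then to argue that any lifted affine policy, once restricted to this section, reduces to an affine policy of the original uncertainties. Since $\nums=1$ I suppress the group index. The natural candidate is the chord of the folded bounding box running from $\fold(\underline{\uevec})$ to $\fold(\overline{\uevec})$: I would define the affine map $\sigma\colon\UE\to\mathbb{R}^{\numuf}$ component-wise via
$$\sigma_{ij}(\uevec)=\fold_{ij}(\underline{\uevec})+\frac{\uescal_i-\underline{\uescal}_i}{\overline{\uescal}_i-\underline{\uescal}_i}(\fold_{ij}(\overline{\uevec})-\fold_{ij}(\underline{\uevec}))\qquad\forall i\in[\numue],\,j\in[\numuef_i],$$
and set $\overline{\Psi}{}'=\{\sigma(\embed\uvec)\,:\,\uvec\in\U\}$, which does not depend on the policy $\xfvec$, as required by the statement.

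I would then establish the two structural properties of $\overline{\Psi}{}'$ via Lemma~\ref{lem:rectangular_hull}. For each $i$ the ratio $(\sigma_{ij}(\uevec)-\fold_{ij}(\underline{\uevec}))/(\fold_{ij}(\overline{\uevec})-\fold_{ij}(\underline{\uevec}))$ equals the common value $(\uescal_i-\underline{\uescal}_i)/(\overline{\uescal}_i-\underline{\uescal}_i)\in[0,1]$ for every $j$, so the box inequalities hold and the monotonicity inequalities of Lemma~\ref{lem:rectangular_hull} hold with equality; hence $\sigma(\uevec)\in\FBOX$. Summing over $j$ and using $\sum_{j}\fold_{ij}(\underline{\uevec})=0$ together with $\sum_{j}\fold_{ij}(\overline{\uevec})=\overline{\uescal}_i-\underline{\uescal}_i$ gives $\fold^+(\sigma(\uevec))=\uevec\in\UE$, which places $\sigma(\uevec)$ in the first set of~\eqref{eq:outer_general}; combining the two yields $\overline{\Psi}{}'\subseteq\UFOG$. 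The same identity gives $\retr(\sigma(\embed\uvec))=\embed^+\fold^+(\sigma(\embed\uvec))=\embed^+\embed\uvec=\uvec$, so that $\sigma\circ\embed$ is an affine right inverse of $\retr$ on $\U$, and therefore $\retr(\overline{\Psi}{}')=\U$.

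Finally, given any feasible $\xfvec$ for $\aff\lbaseproblem(\PUF,\UFOG)$, I would define $\xvec=\xfvec\circ\sigma\circ\embed$, which is affine as a composition of affine maps. Feasibility follows by evaluating the lifted constraint at the point $\sigma(\embed\uvec)\in\UFOG$ and invoking $\retr(\sigma(\embed\uvec))=\uvec$, giving $\amat_1\xvec(\uvec)=\amat_1\xfvec(\sigma(\embed\uvec))\leq\bvec_1(\retr(\sigma(\embed\uvec)))=\bvec_1(\uvec)$ for all $\uvec\in\U$. The claimed agreement is then immediate: every $\ufvec\in\overline{\Psi}{}'$ equals $\sigma(\embed\uvec)$ for $\uvec=\retr(\ufvec)$, whence $\xvec(\retr(\ufvec))=\xfvec(\sigma(\embed\uvec))=\xfvec(\ufvec)$.

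The step I expect to require the most care is verifying $\xvec\in\X$, i.e.\ that $\xvec$ inherits non-anticipativity. Here I would exploit that $\sigma$ acts coordinate-wise: each lift component $\sigma_{ij}(\embed\uvec)$ belonging to stage $t$ depends only on the embedded coordinate $\uescal_i=(\embed_t\uvec^t)_i$, hence only on $\uvec^t$. Therefore the stage-$t$ block $(\sigma(\embed\uvec))^t$ is a function of $\uvec^t$ alone, and composing with the non-anticipative $\xfvec_t$ shows that $\xvec_t$ depends only on $\uvec^t$ as well. Confirming that this stage decomposition of $\sigma$ is compatible with the embeddings $\embed^t$, and handling the degenerate coordinates with $\underline{\uescal}_i=\overline{\uescal}_i$, is then routine bookkeeping.
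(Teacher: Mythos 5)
Your proposal is correct and follows essentially the same route as the paper: your affine section $\sigma$ coincides exactly with the paper's affine folding approximation $\overline{\fold}_{ij}(\uevec) = (z_{ij}-z_{i,j-1})\,\frac{\uescal_i - \underline{\uescal}_i}{\overline{\uescal}_i-\underline{\uescal}_i}$ (since $\fold_{ij}(\underline{\uevec})=0$ and $\fold_{ij}(\overline{\uevec})=z_{ij}-z_{i,j-1}$ for the full bounding box), your $\overline{\Psi}{}'=\sigma(\embed\,\U)$ is the paper's $\overline{\lift}(\U)$, and the containment, left-inverse, and feasibility arguments match the paper's Steps (i)--(iii). Your explicit treatment of non-anticipativity via the coordinate-wise, stage-respecting structure of $\sigma\circ\embed$ is a point the paper leaves implicit, and it is handled correctly.
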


Theorem~\ref{thm:negative_SP} shows that solutions $\xfvec$ to $\aff \lproblem(\PUF, \UFOG)$ can be interpreted as ``extensions'' of solutions $\xvec$ to $\aff \baseproblem(\PU, \U)$. Thus, we do not gain flexibility in terms of feasibility when solving $\aff \lproblem(\PUF, \UFOG)$ instead of $\aff \baseproblem(\PU, \U)$. However, two solutions $\xfvec, \yvec'$ that extend the same solution $\xvec$, that is, $\xfvec(\ufvec) = \xvec(\retr(\ufvec)) = \yvec'(\ufvec)$ for all $\ufvec\in\overline{\Psi}{}'$, may differ outside $\overline{\Psi}{}'$ and thus yield different objective values under the lifted probability distribution $\PUF$.
In other words, any improvements observed in stochastic programming problems (\emph{cf.}~Example~\ref{ex:SP}) stem from the lifted probability distribution in the objective function.
In robust optimization problems (\emph{cf.}~Example~\ref{ex:RP}), on the other hand, the lifted probability distribution does not affect the objective function, and thus Theorem~\ref{thm:negative_SP} implies that $\aff\lbaseproblem(\PUFO, \UFOG)$ cannot improve over $\aff\baseproblem(\PU, \U)$ in the robust case.

\begin{corollary}
\label{thm:negative_RP}
$\aff \lproblem (\PUF, \UFOG)$ and $\aff\baseproblem (\PU, \U)$ attain the same optimal value for robust optimization problems (\textit{cf.}~Example~\ref{ex:RP}).
\end{corollary}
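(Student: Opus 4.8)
The plan is to sandwich the two optimal values between matching inequalities. One direction is already in hand: the discussion preceding Theorem~\ref{thm:negative_SP} shows that mapping any feasible $\xvec$ of $\aff\baseproblem(\PU, \U)$ to $\xfvec = \xvec \circ \retr$ yields a feasible point of $\aff\lbaseproblem(\PUF, \UFOG)$ with the same objective value, so the latter is a weakly tighter conservative approximation and its optimal value is at most that of $\aff\baseproblem(\PU, \U)$. It therefore remains to establish the reverse inequality in the robust setting, for which I would invoke Theorem~\ref{thm:negative_SP} (applicable since robust problems have $\nums = 1$): fix any feasible $\xfvec$ of $\aff\lbaseproblem(\PUF, \UFOG)$ and let $\xvec$ be the associated feasible solution of $\aff\baseproblem(\PU, \U)$ with $\xfvec(\ufvec) = \xvec(\retr(\ufvec))$ for all $\ufvec \in \overline{\Psi}{}'$.

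The crux is then to verify that $\xvec$ attains the same objective value as $\xfvec$. Here I would exploit the epigraphical reformulation of Example~\ref{ex:RP}: the augmented objective coefficient is deterministic and selects only the here-and-now epigraphical variable, so the objective of $\aff\lbaseproblem(\PUF, \UFOG)$ reduces to $\mathbb{E}_{\PUF}[\tau'] = \tau'$ and that of $\aff\baseproblem(\PU, \U)$ to $\mathbb{E}_{\PU}[\tau] = \tau$, each independent of the underlying distribution. Since $\tau$ and $\tau'$ are here-and-now decisions, they are constant components of $\xvec$ and $\xfvec$; evaluating the identity $\xfvec(\ufvec) = \xvec(\retr(\ufvec))$ at any point of the nonempty set $\overline{\Psi}{}'$ (nonempty because $\retr(\overline{\Psi}{}') = \U$) forces these constant components to coincide, whence $\tau = \tau'$. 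Thus every feasible $\xfvec$ admits a feasible $\xvec$ with identical objective value, so the optimal value of $\aff\baseproblem(\PU, \U)$ is at most that of $\aff\lbaseproblem(\PUF, \UFOG)$, and combining this with the opposite inequality yields equality.

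The main obstacle, and precisely the point at which the robust case departs from the stochastic one, is this objective comparison. In the stochastic setting $\xfvec$ and $\xvec \circ \retr$ may disagree outside $\overline{\Psi}{}'$, and because the expectation is taken under $\PUF$ such discrepancies generally alter the objective value, which is exactly why genuine improvements can arise there. In the robust case, by contrast, the objective is carried entirely by the here-and-now epigraphical variable, i.e.\ by the one block of the decision that Theorem~\ref{thm:negative_SP} preserves verbatim on $\overline{\Psi}{}'$, so the behaviour of the policy away from $\overline{\Psi}{}'$ is immaterial to the objective. Making this observation rigorous---that the robust objective does not depend on the realization of the policy outside $\overline{\Psi}{}'$---is the only substantive step, while the remaining bookkeeping follows directly from Theorem~\ref{thm:negative_SP}.
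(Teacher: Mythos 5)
Your proposal is correct and follows essentially the same route as the paper: the paper also combines the forward direction (any $\xvec$ extends to $\xfvec = \xvec \circ \retr$ with equal objective, established before Theorem~\ref{thm:negative_SP}) with the observation that, in the robust reformulation of Example~\ref{ex:RP}, the objective is carried entirely by the here-and-now epigraphical variable, which Theorem~\ref{thm:negative_SP} preserves, so the lifted distribution $\PUF$ cannot influence the optimal value. Your write-up merely makes explicit (via $\tau = \tau'$ on the nonempty set $\overline{\Psi}{}'$) what the paper leaves as an informal remark following Theorem~\ref{thm:negative_SP}.
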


A special case of Corollary~\ref{thm:negative_RP} that applies to a restricted class of problems with non-negative right-hand-side uncertainty and the absence of the embedding operator $\embed$ has been proved by \citet{Thomae2024}. 
This restricted class of problems excludes prominent applications such as the inventory management problem considered in our numerical experiments, as well as the multi-item newsvendor problem in which the absence of improvements over affine policies has most recently been observed numerically \citep{Han2025}. 
Corollary~\ref{thm:negative_RP} generalizes this result and implies that the piecewise affine decision rules of \citet{Georghiou2015} do not improve over affine policies in the broader class of robust optimization problems covered by Example~\ref{ex:RP}. 

Theorem~\ref{thm:negative_SP} and Corollary~\ref{thm:negative_RP} extend to the setting where the constraint matrices $\amat_g$ are uncertain. 
As $\aff\baseproblem(\PU, \U)$ does, in general, not admit tractable reformulations when $\amat_g$ is uncertain, we defer the discussion of this more general setting to Appendix~\ref{appendix:proof:thm:negative_SP}.

In the following, we develop tighter outer approximations $\UFO$ of the lifted support $\UF$ that result in superior approximations $\aff \lproblem(\PUF, \UFO)$ of $\baseproblem (\PU, \U)$ than $\aff \lproblem (\PUF, \UFOG)$. In particular, our approximations ensure that $\aff \lproblem (\PUFO, \UFO)$ no longer coincides with $\aff \baseproblem (\PU, \U)$ even in the robust case.

\section{Valid Cuts for the Lifted Support $\UF$}
\label{sec:tightening}

We tighten the outer approximations $\UFOG$ of the lifted support $\UF$ through cuts that are satisfied by all lifted uncertainty realizations $\ufvec\in\UF$ but that are violated by at least some of the additional realizations $\ufvec\in\UFOG \setminus \UF$ introduced by the approximation~\eqref{eq:outer_general}. 
To this end, we study $l_1$-norm distances between embedded uncertainty realizations $\uevec\in\UE$ and hyperrectangles $[\zvec^-, \zvec^+] \subseteq \UE$,
$$
\dist(\uevec, [\zvec^-, \zvec^+]) = \min_{\uevec'\in[\zvec^-, \zvec^+]} \norm{\uevec - \uevec'}_1,
$$
where the hyperrectangles are selected from the discrete set $\mathcal{Z} = \{[\zvec^-, \zvec^+] \,\colon\, \zvec^-, \zvec^+\in Z,\; \zvec^-\leq\zvec^+\}$ whose vertices align to our breakpoint grid $Z = \bigtimes_{i\in[\numue]} \{ z_{ij} \,\colon\, j\in[\numuef_i]_0\}$. Note that by construction, all lifted uncertainty realizations $\ufvec\in\UF_g$ satisfy
\begin{equation*}
\dist(\fold^+(\ufvec), [\zvec^-, \zvec^+]) \leq \maxdist_g([\zvec^-, \zvec^+])
\quad \text{with} \quad
\maxdist_g([\zvec^-, \zvec^+]) = \max_{\uevec\in\UE_g}\;\dist(\uevec, [\zvec^-, \zvec^+]).
\end{equation*}
Our cuts for the lifted support rely on an extension of the $l_1$-norm grid distance $d$ to embedded uncertainty realizations $\fold^+(\ufvec)$, $\ufvec\in \UFOG \setminus \UF$ that we characterize next.

\begin{observation}
\label{obs:distance_formalization}
For all $\ufvec\in\UFOG$, we have
\begin{equation}\label{eq:the-mother-of-all-distances}
\dist(\fold^+(\ufvec), [\zvec^-, \zvec^+]) \leq
\sum_{i\in[\numue]} \left(
    \sum\limits_{j \in \numuef^+_i} \ufscal_{ij} 
    + z^-_i - \underline{\uescal}_i
    -\sum\limits_{j \in \numuef^-_i} \ufscal_{ij} \right)
    ,
\end{equation}
where $\numuef^+_i = \{j\in[\numuef_i] \colon z^+_i < z_{ij}\}$ and $\numuef^-_i = \{j\in[\numuef_i] \colon z^-_i \geq z_{ij}\}$ are the index sets corresponding to the folded uncertainty components outside the box $[\zvec^-, \zvec^+]$. 
Inequality~\eqref{eq:the-mother-of-all-distances} is tight for $\ufvec\in\UF$.
\end{observation}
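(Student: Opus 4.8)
The plan is to exploit the fact that both sides of~\eqref{eq:the-mother-of-all-distances} decompose coordinate-wise, reducing the claim to a scalar inequality for each $i\in[\numue]$. Since the $l_1$-distance to a hyperrectangle separates across coordinates, I would first write
\[
\dist(\fold^+(\ufvec),[\zvec^-,\zvec^+]) = \sum_{i\in[\numue]}\left[ (z^-_i - \fold^+_i(\ufvec))_+ + (\fold^+_i(\ufvec) - z^+_i)_+ \right],
\]
and observe that the right-hand side of~\eqref{eq:the-mother-of-all-distances} is likewise a sum over $i$ of the terms $R_i := \sum_{j\in\numuef^+_i}\ufscal_{ij} + z^-_i - \underline{\uescal}_i - \sum_{j\in\numuef^-_i}\ufscal_{ij}$. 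It then suffices to show, for each fixed $i$, that $(z^-_i - \fold^+_i(\ufvec))_+ + (\fold^+_i(\ufvec)-z^+_i)_+ \le R_i$.

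Before estimating, I would record the two facts that drive the argument. First, because $\zvec^-,\zvec^+$ align with the breakpoint grid, there are indices $a\le b$ with $z^-_i = z_{ia}$ and $z^+_i = z_{ib}$, so that $\numuef^-_i = \{1,\dots,a\}$, $\numuef^+_i = \{b+1,\dots,\numuef_i\}$, and the telescoping identities $z^-_i - \underline{\uescal}_i = \sum_{j\le a}(z_{ij}-z_{i,j-1})$ and $z^+_i - \underline{\uescal}_i = \sum_{j\le b}(z_{ij}-z_{i,j-1})$ hold. Second, every $\ufvec\in\UFOG$ satisfies the box constraints $0\le\ufscal_{ij}\le z_{ij}-z_{i,j-1}$ for all $i,j$; these follow from the bounds $\fold_{ij}(\underline{\uevec}_g)\le\ufscal_{ij}\le\fold_{ij}(\overline{\uevec}_g)$ of Lemma~\ref{lem:rectangular_hull} together with the elementary observation that $\fold_{ij}$ takes values in $[0,z_{ij}-z_{i,j-1}]$.

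With these in hand I would bound the two nonnegative parts separately, using $\fold^+_i(\ufvec)=\underline{\uescal}_i + \sum_{j\in[\numuef_i]}\ufscal_{ij}$. For the overshoot, substituting the telescoping form of $z^+_i$ yields $\fold^+_i(\ufvec)-z^+_i = \sum_{j\le b}(\ufscal_{ij}-(z_{ij}-z_{i,j-1})) + \sum_{j\in\numuef^+_i}\ufscal_{ij}$; the first sum is nonpositive by the upper box bounds and the second is nonnegative by the lower ones, so $(\fold^+_i(\ufvec)-z^+_i)_+\le\sum_{j\in\numuef^+_i}\ufscal_{ij}$. For the shortfall, substituting the telescoping form of $z^-_i$ and discarding the nonnegative tail terms $\ufscal_{ij}$ with $j>a$ gives $z^-_i - \fold^+_i(\ufvec) \le z^-_i - \underline{\uescal}_i - \sum_{j\in\numuef^-_i}\ufscal_{ij}$, whose right-hand side is itself nonnegative by the upper box bounds; hence $(z^-_i-\fold^+_i(\ufvec))_+$ is bounded by the same quantity. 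Adding the two estimates reproduces precisely $R_i$, which establishes the scalar inequality and therefore~\eqref{eq:the-mother-of-all-distances}.

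For the tightness claim I would specialize to $\ufvec=\fold(\uevec)\in\UF$ and invoke the cumulative identity $\underline{\uescal}_i + \sum_{j'\in[j]}\fold_{ij'}(\uevec) = \min\{\uescal_i,z_{ij}\}$ from Section~\ref{sec:lift}. This identity turns $\sum_{j\in\numuef^+_i}\ufscal_{ij}$ into $(\uescal_i - z^+_i)_+$ and $z^-_i - \underline{\uescal}_i - \sum_{j\in\numuef^-_i}\ufscal_{ij}$ into $(z^-_i-\uescal_i)_+$, so both bounds above hold with equality and $R_i = (z^-_i-\uescal_i)_+ + (\uescal_i - z^+_i)_+$, which is exactly the contribution of coordinate $i$ to $\dist(\uevec,[\zvec^-,\zvec^+])$. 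I expect the only genuine difficulty to be bookkeeping with the index sets $\numuef^\pm_i$ and the telescoping sums; conceptually, the single driving observation is that the partial sums of the fold components equal $\min\{\uescal_i,z_{ij}\}-\underline{\uescal}_i$, a quantity that simultaneously controls the overshoot above $z^+_i$ and the shortfall below $z^-_i$.
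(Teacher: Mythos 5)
Your proof is correct, and while it uses the same coordinate-wise decomposition of the $l_1$-distance as the paper, the two arguments are organized quite differently. The paper first establishes an \emph{exact identity}: applying the cumulative-sum property $\underline{\uescal}_i + \sum_{j'\in[j]}\fold_{ij'}(\uevec) = \min\{\uescal_i, z_{ij}\}$ to the retracted point $\fold^+(\ufvec)\in\UE$, it writes $\dist(\fold^+(\ufvec),[\zvec^-,\zvec^+])$ exactly as the right-hand side of~\eqref{eq:the-mother-of-all-distances} with each $\ufscal_{ij}$ replaced by the re-folded component $\fold_{ij}(\fold^+(\ufvec))$, and then proves the comparison inequalities $\sum_{j\in\numuef^-_i}\ufscal_{ij} \leq \sum_{j\in\numuef^-_i}\fold_{ij}(\fold^+(\ufvec))$ and $\sum_{j\in\numuef^+_i}\ufscal_{ij} \geq \sum_{j\in\numuef^+_i}\fold_{ij}(\fold^+(\ufvec))$ via partial-sum estimates. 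You bypass the re-folded point $\fold(\fold^+(\ufvec))$ entirely: you bound the overshoot $(\fold_i^+(\ufvec)-z^+_i)_+$ and the shortfall $(z^-_i-\fold_i^+(\ufvec))_+$ directly from the grid-telescoping identities and the box bounds $0\leq\ufscal_{ij}\leq z_{ij}-z_{i,j-1}$, which, as you correctly note, follow from Lemma~\ref{lem:rectangular_hull}. Your route is shorter and more elementary for the inequality itself; the price is that tightness on $\UF$ requires a separate calculation (which you carry out correctly via the cumulative identity), whereas in the paper tightness is immediate from its first step, because re-folding is the identity on $\UF$, i.e., $\fold_{ij}(\fold^+(\ufvec)) = \ufscal_{ij}$ there. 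Both proofs ultimately rest on the same two facts---the cumulative-minimum identity and the componentwise box bounds---so the difference is one of architecture rather than substance, but your direct estimation is a genuinely leaner way to obtain the inequality.
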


In the following, we use the notational shortcut $\fdist (\ufvec, [\zvec^-, \zvec^+]) = \sum_{i\in[\numue]} (\sum_{j \in \numuef^+_i} \ufscal_{ij} + z^-_i - \underline{\uescal}_i -\sum_{j \in \numuef^-_i} \ufscal_{ij} )$ to refer to the right-hand side of~\eqref{eq:the-mother-of-all-distances}.
Observation~\ref{obs:distance_formalization} motivates us to study the refined outer approximation $\UFO = \{ \UFO_g \}_{g \in [G]}$, where
\begin{equation}\label{eq:new-and-shiny-outer-approximation}
\UFO_g \;=\;  
\left\{ 
\ufvec \in \UFOG_g \, \colon \,
\fdist(\ufvec, [\zvec^-, \zvec^+]) 
\leq 
\maxdist_g ([\zvec^-, \zvec^+]) 
\;\; \forall [\zvec^-, \zvec^+]\in\mathcal{Z} 
\right\},
\qquad
g\in[\nums],
\end{equation}
of the lifted support subsets $\UF_g$. Figure~\ref{fig:grid_construction} illustrates our grid distance $d'$.

\begin{figure}[t]
    \centering
    \begin{tikzpicture}
\foreach \x in {0, ..., 5}{
    \coordinate (z1\x) at (\x-.5,-.7);
    \draw[thin, dashed, opacity=.5] (z1\x) -- ++(0,4.5);
    \draw (z1\x) node[below]{$z_{1\x}$};
    }

\foreach \y in {0, ..., 4}{
    \coordinate (z2\y) at (-.7,\y-.5);
    \draw[thin, dashed, opacity=.5] (z2\y) -- ++(5.5,0);
    \draw (z2\y) node[left]{$z_{2\y}$};
    }

\foreach \x in {0, ..., 5}{
\foreach \y in {0, ..., 4}{
    \fill [black, opacity=.5] (\x-.5,\y-.5) circle (.05);
    }}

\draw[thick, ->] (-0.7,0) -- (4.9,0) node[right] {$\uescal_1$};
\draw[thick, ->] (0,-.7) -- (0,3.9) node[above] {$\uescal_2$};

\begin{scope}[shift={(2,1.5)}]
  \filldraw[thick,green,rotate=-57,fill opacity=.15] (0,0) ellipse [x radius=1.5, y radius=2.8];
\end{scope}

\coordinate (z-) at (2.5,0.5);
\coordinate (z+) at (3.5,2.5);
\filldraw[blue, thick, fill opacity=.15] (z-) rectangle (z+);
\draw (z-) node[above right] {$\zvec^-$};
\draw (z+) node[below left] {$\zvec^+$};

\coordinate (theta) at (1,2);
\draw[thick, dotted] (theta) node[below] {$\uevec$} -- node[above, midway]{$\fdist$} ++(1.5,0);
\fill (theta) circle(.05);

\draw[thick, dotted] (z-) -- ++(0,-.75) node[right]{$\maxdist_g$} -- ++(-2.65,0);
\fill (z-)++(-2.65,-.75) circle(.05);

\draw [decorate,decoration={brace}]
(.45,3.7) -- (2.55,3.7) node [black,midway, above] 
{$\numuef^-_1$};
\draw [decorate,decoration={brace}]
(4.35,3.7) -- (4.65,3.7) node [black,midway, above] 
{$\numuef^+_1$};
\draw [decorate,decoration={brace}]
(4.7,.65) -- (4.7,.35) node [black,midway, right] 
{$\numuef^-_2$};
\draw [decorate,decoration={brace}]
(4.7,3.65) -- (4.7,3.35) node [black,midway, right] 
{$\numuef^+_2$};
\end{tikzpicture}
    \caption{
    Visualization of the breakpoint grid $Z$ (grey), 
    a gridpoint-induced rectangle $[\zvec^-, \zvec^+]$ (blue, solid),
    the index sets $\numuef^-_1 = \{1,2,3\}$, $\numuef^+_1 = \{5\}$, $\numuef^-_2 = \{1\}$ and $\numuef^+_2 = \{4\}$ corresponding to the embedded uncertainty components outside the rectangle, 
    the distance $\fdist(\fold(\uevec), [\zvec^-, \zvec^+])$ between an embedded uncertainty realization $\uevec\in\UE$ and the rectangle, 
    and the maximum distance $\maxdist_g([\zvec^-, \zvec^+])$ between any point in the embedded support subset $\UE_g$ (green, ellipsis) and the rectangle.
    To ease the exposition, we omit the arguments of $\fdist$ and $\maxdist_g$.
    }
    \label{fig:grid_construction}
\end{figure}

Recall that a set $\mathcal{X} \subseteq \mathbb{R}^{\numu}$ is conic representable if $\mathcal{X} = \{ \xvec \in\mathbb{R}^{\numu} \,\colon\, \vmat \xvec \succeq_{\mathcal{K}} \dvec\}$ for some $\vmat$, $\dvec$ and a proper cone $\mathcal{K}$. 
We next present convex reformulations of the problem $\aff\lproblem(\PUF, \UFOG)$ along the lines of \citet{Georghiou2015} and our tightened approximation $\aff \lproblem(\PUF, \UFO)$.

\begin{observation}\label{obs:conic-reformulation}
Assume that the support subsets $\U_g$ are conic representable. Then:
\begin{enumerate}[label=(\roman*)]
    \item\label{obs:conic-reformulation-UFOG} If $\aff\lproblem(\PUF, \UFOG)$ is feasible with a Slater point, it has the same optimal value as the convex conic program
        \begin{equation}
        \label{eq:counterpart_dual}
        \begin{aligned}
            \maximize_{\ufvec, \svec} \quad 
            & - \sum_{g\in[\nums],k\in[\numc_g]} b_{gk} (\retr(\ufvec_{gk})) + (s_{gk}-1)b_{gk}(\retr(\nullvec)) \\
            \st \quad & \sum_{g\in[\nums],k\in[\numc_g]} \amat_{gk}^\intercal \ufvec^\intercal_{gk} = - \mathbb{E}_{\PUF}[\cvec(\retr(\sufvec))\sufvec^\intercal]\\
            &\sum_{g\in[\nums],k\in[\numc_g]} s_{gk} \amat_{gk}^\intercal = -\mathbb{E}_{\PUF}[\cvec(\retr(\sufvec))] && \\
            &\ufvec_{gk} \in s_{gk} \UFOG_g, \;\; s_{gk} \geq 0 && \forall g\in[\nums], k\in[\numc_g],
        \end{aligned}
        \end{equation}
        where $\amat_{gk}$ is the $k$\textsuperscript{th} of the $\numc_g$ rows of $\amat_g$ and $b_{gk}(\uvec)$ is the $k$\textsuperscript{th} component of $\bvec_g(\uvec)$. 
        In particular, the constraint $\ufvec_{gk} \in s_{gk} \UFOG_g$ is conic representable for any  $g\in[\nums]$, $k\in[\numc_g]$, and $s_{gk} \geq 0$. 
    \item\label{obs:conic-reformulation-UFO} If $\aff \lproblem(\PUF, \UFO)$ is feasible with a Slater point, it has the same optimal value as~\eqref{eq:counterpart_dual} tightened by the affine cuts
    \begin{equation}
        \label{eq:counterpart_dual_cuts}
        s_{gk}\fdist\left(\frac{1}{s_{gk}}\ufvec_{gk}, [\zvec^-, \zvec^+]\right) \leq s_{gk}\maxdist_g ([\zvec^-, \zvec^+]) \qquad \forall g\in[\nums], k\in[\numc_g], [\zvec^-, \zvec^+]\in\mathcal{Z}.
    \end{equation}
\end{enumerate}
\end{observation}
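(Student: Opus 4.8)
The plan is to show that, after substituting an affine decision rule, $\aff\lproblem(\PUF, \UFOG)$ reduces to a linear optimization problem over the policy coefficients with semi-infinite constraints, and that \eqref{eq:counterpart_dual} is precisely its conic dual. First I would parametrize the affine policy as $\xfvec(\ufvec) = \xmat\ufvec + \xvec_0$, where the coefficients $(\xmat, \xvec_0)$ respect the non-anticipativity structure encoded in $\XF$ (a set of linear restrictions that pass transparently through the duality). Because $\cvec(\retr(\cdot))$ is affine and $\retr$ is affine, the objective $\mathbb{E}_\PUF[\cvec(\retr(\sufvec))^\intercal\xfvec(\sufvec)]$ is linear in $(\xmat,\xvec_0)$ with coefficients given by the moments $\mathbb{E}_\PUF[\cvec(\retr(\sufvec))\sufvec^\intercal]$ and $\mathbb{E}_\PUF[\cvec(\retr(\sufvec))]$ of the lifted distribution; these are exactly the right-hand sides of the two equality constraints of \eqref{eq:counterpart_dual}. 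Likewise, each constraint row reads $\max_{\ufvec\in\UFOG_g}\{\amat_{gk}(\xmat\ufvec+\xvec_0) - b_{gk}(\retr(\ufvec))\} \leq 0$, an affine-in-$\ufvec$ maximization over $\UFOG_g$.

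The key structural fact is that $\UFOG_g$ is conic representable: the first set in \eqref{eq:outer_general} is conic representable because $\fold^+$ is affine and $\U_g$ is assumed conic representable, while the second set $\FBOXg$ is polyhedral by Lemma~\ref{lem:rectangular_hull}, and an intersection of conic-representable sets is again conic representable. I would therefore dualize each inner maximization via conic duality, introducing a nonnegative multiplier $s_{gk}$ for row $(g,k)$ together with the dual variables of the conic representation of $\UFOG_g$; the worst-case realization enters through the homogenized variable $\ufvec_{gk}\in s_{gk}\UFOG_g$. Assembling the rows and taking the overall conic dual, the stationarity conditions in $\xmat$ and $\xvec_0$ reproduce the two equality constraints of \eqref{eq:counterpart_dual}, the constant part of $b_{gk}(\retr(\cdot))$ produces the offset $(s_{gk}-1)b_{gk}(\retr(\nullvec))$ in the objective through the usual perspective/homogenization bookkeeping, and strong duality --- which holds because of the assumed Slater point --- yields equality of optimal values. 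This establishes part~\ref{obs:conic-reformulation-UFOG}.

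For part~\ref{obs:conic-reformulation-UFO}, I would observe that, by definition \eqref{eq:new-and-shiny-outer-approximation}, $\UFO_g$ is simply $\UFOG_g$ intersected with the finitely many half-spaces $\{\ufvec\colon \fdist(\ufvec,[\zvec^-,\zvec^+])\leq\maxdist_g([\zvec^-,\zvec^+])\}$ indexed by $[\zvec^-,\zvec^+]\in\mathcal{Z}$. Since $\fdist(\cdot,[\zvec^-,\zvec^+])$ is affine in $\ufvec$ by Observation~\ref{obs:distance_formalization} and $\maxdist_g([\zvec^-,\zvec^+])$ is a constant, these cuts are affine and $\UFO_g$ remains conic representable. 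Repeating the derivation of part~\ref{obs:conic-reformulation-UFOG} verbatim with $\UFOG_g$ replaced by $\UFO_g$, the homogenized membership $\ufvec_{gk}\in s_{gk}\UFO_g$ decomposes into $\ufvec_{gk}\in s_{gk}\UFOG_g$ together with the perspective of each cut, namely $s_{gk}\fdist(\frac{1}{s_{gk}}\ufvec_{gk},[\zvec^-,\zvec^+])\leq s_{gk}\maxdist_g([\zvec^-,\zvec^+])$, which is exactly \eqref{eq:counterpart_dual_cuts}.

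The main obstacle is the duality bookkeeping rather than any deep idea: one must track the affine constant terms so that they assemble into the $(s_{gk}-1)b_{gk}(\retr(\nullvec))$ offset, verify that the two strong-duality invocations --- exactness of the per-row robust counterparts (which uses compactness and a constraint qualification for $\UFOG_g$) and the overall conic duality (which uses the stated Slater/feasibility hypothesis) --- are licensed so that no duality gap arises, and confirm that the non-anticipativity restrictions on $(\xmat,\xvec_0)$ are carried through the dual without altering the displayed form. The only content specific to this construction is that $\UFOG_g$ and $\UFO_g$ are conic representable and that the cut in \eqref{eq:new-and-shiny-outer-approximation} is affine; this is what reduces part~\ref{obs:conic-reformulation-UFO} to part~\ref{obs:conic-reformulation-UFOG}.
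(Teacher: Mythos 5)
Your proposal is correct and takes essentially the same route as the paper: the paper likewise dualizes the semi-infinite program in the affine policy coefficients $(\xmat,\xvec)$ --- invoking the primal-worst-dual-best strong duality of \citet{Zhen2023} under the Slater assumption, rather than assembling your two conic-duality steps (per-row robust counterpart, then overall dual) by hand --- and then eliminates the bilinear terms via the same perspective substitution $\altufvec_{gk} = s_{gk}\ufvec_{gk}$, which is exactly where the $(s_{gk}-1)b_{gk}(\retr(\nullvec))$ offset, the membership $\ufvec_{gk}\in s_{gk}\UFOG_g$, and the scaled cuts~\eqref{eq:counterpart_dual_cuts} arise. The only bookkeeping the paper makes explicit that you leave implicit is the behavior of the perspective cuts at $s_{gk}=0$, handled there by rewriting them as $\fdist(\ufvec_{gk},[\zvec^-,\zvec^+])+(s_{gk}-1)\fdist(\nullvec,[\zvec^-,\zvec^+])\leq s_{gk}\maxdist_g([\zvec^-,\zvec^+])$, which is the same perspective argument you describe.
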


We can readily extract optimal policies $\xfvec(\cdot)$ to $\aff\lproblem(\PUF, \UFOG)$ and $\aff \lproblem(\PUF, \UFO)$ from the shadow prices of~\eqref{eq:counterpart_dual} as well as its tightened counterpart presented in Observation~\ref{obs:conic-reformulation}~\ref{obs:conic-reformulation-UFO}. We prefer to work with the dual formulations of both problems, as opposed to the primal formulations that are predominant in the literature, since the duals facilitate the delayed introduction of the cuts from~\eqref{eq:new-and-shiny-outer-approximation} as discussed in the next section.

\section{Efficient Cut Separation}
\label{sec:solving_subproblem}

Our refined outer approximation $\UFO$ in~\eqref{eq:new-and-shiny-outer-approximation} comprises exponentially many constraints indexed by $[\zvec^-, \zvec^+] \in \mathcal{Z}$. Rather than including all of these constraints \emph{a priori}, we use a constraint generation approach to iteratively add constraints that are violated by incumbent solutions $(\ufvec, \bm{s})$ to problem $\aff \lproblem(\PUF, \UFO)$. To this end, we need to solve the separation problem
\begin{equation}
\label{eq:subproblem}
\max_{[\zvec^-, \zvec^+]\in\mathcal{Z}} \fdist(\ufvec_{gk} / s_{gk}, \, [\zvec^-, \zvec^+]) \; - \; \maxdist_g([\zvec^-, \zvec^+])
\end{equation}
for each $g\in[\nums]$ and $k\in[\numc_g]$ with $s_{gk}>0$ to identify maximally violated constraints (\emph{cf.}~Observation~\ref{obs:conic-reformulation}). Unfortunately, determining maximally violated constraints is hard in general.

\begin{proposition}
\label{pro:np_hard}
The separation problem~\eqref{eq:subproblem} is strongly NP-hard.
\end{proposition}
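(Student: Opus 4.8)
The plan is to prove hardness by a reduction from \textsc{3-Sat}, which is strongly NP-hard. Given a 3-CNF formula, I would build a single separation instance of~\eqref{eq:subproblem}---fixing $\nums=1$ and working with a single constraint row, so that the only decision left is the choice of grid box $[\zvec^-,\zvec^+]\in\mathcal{Z}$---and show that its optimal value exceeds a fixed positive threshold if and only if the formula is satisfiable. Since \textsc{3-Sat} is already hard with unit-size data, and since every coefficient I introduce will be bounded by a constant, a polynomial-time reduction of this kind delivers \emph{strong} (not merely weak) NP-hardness.

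The construction exploits the fact that the two ingredients of the objective behave very differently across coordinates. The first term $\fdist(\ufvec,[\zvec^-,\zvec^+])$ is \emph{separable}: by Observation~\ref{obs:distance_formalization} it splits into an independent contribution per coordinate $i\in[\numue]$. The second term $\maxdist_g([\zvec^-,\zvec^+])=\max_{\uevec\in\UE_g}\dist(\uevec,[\zvec^-,\zvec^+])$ is the \emph{only} source of coupling between coordinates, since the inner $l_1$-distance itself separates but the maximization over $\UE_g$ does not. I would take the embedding $\embed$ to be the identity, introduce one coordinate per Boolean variable together with an always-active auxiliary coordinate, and place \emph{interior} breakpoints $z_{ij}$ so that the query point $\ufvec$ (the normalized incumbent $\ufvec_{gk}/s_{gk}$) can be placed strictly inside $\UFOG_g\setminus\UF_g$; this is essential, because genuine violations live only there---with a single breakpoint per coordinate the folding is an affine bijection, $\UFOG_g=\UF_g$, and the separation value is identically zero. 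The per-coordinate box choice then encodes the truth value of the corresponding literal through the separable reward $\fdist$, while $\UE_g$ is chosen as the polytope whose support function $\mathbf{x}\mapsto\max_{\uevec\in\UE_g}\mathbf{x}^\intercal\uevec$ realizes the clauses, one vertex of $\UE_g$ per clause, with the auxiliary coordinate supplying the per-clause constants. Under this encoding the separation problem collapses to a max--min (equivalently, a Boolean feasibility) problem over the admissible box states, whose value clears the threshold exactly when all clauses can be satisfied simultaneously.

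The main obstacle is precisely this coupling term $\maxdist_g$, and two things must be controlled at once. First, the equivalence must be argued in \emph{both} directions over the full, exponentially large family $\mathcal{Z}$: I must rule out that some unintended box---in particular one mixing the several admissible corner states available in each coordinate---pushes the objective above the threshold without a satisfying assignment, and this demands a tight handle on how $\max_{\uevec\in\UE_g}\dist(\uevec,\cdot)$ reacts to every combination of corner choices. Second, the instance must be \emph{legitimate}, i.e.\ the query point has to be a bona fide element of $\UFOG_g$ so that $\fold^+(\ufvec)\in\UE_g$ and the cut is meaningful; this constrains the admissible reward vector and interacts with the requirement that a strict violation exists at all. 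Showing that these requirements are jointly satisfiable while keeping every coefficient of constant size is the delicate step, and the separability of $\fdist$ together with the vertex/support-function description of $\maxdist_g$ are the two levers that make the argument go through.
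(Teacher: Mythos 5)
Your proposal is a plan rather than a proof: the two steps you yourself flag as delicate---constructing the clause gadget (the polytope $\UE_g$ with one vertex per clause plus an auxiliary coordinate) and proving soundness over the full exponential family $\mathcal{Z}$ (no ``unintended'' mixed box exceeds the threshold without a satisfying assignment)---are precisely the content of an NP-hardness proof, and they are left open. Nothing in the write-up certifies that the max--min value collapses to a Boolean satisfiability condition. Worse, the additive structure of the objective works against your encoding: $\dist(\uevec,[\zvec^-,\zvec^+])$ separates as a sum of per-coordinate distances, so $\maxdist_g$ couples coordinates only through sums evaluated at vertices of $\UE_g$. Sums naturally express \emph{covering} conditions (``enough total weight must touch every edge''), not disjunctions, so turning a 3-literal OR into a threshold on a sum of three coordinate distances is exactly the nontrivial step, and it is not accomplished here. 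The paper exploits this additive structure head-on by reducing from \textsc{Minimum Vertex Cover} instead: it takes $G=1$, identity embedding, $\U=\{\uvec\geq\nullvec \,\colon\, (\wmat,\unitytary)\,\uvec\leq2\unitvec\}$ for the incidence matrix $\wmat$ of a graph, observes that the inner maximization defining $\maxdist$ is an LP whose dual is a covering LP, and then shows by exchange arguments that optimal dual solutions can be taken binary, recovering vertex cover exactly.

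Second, a structural claim you rely on is false. You assert that with a single breakpoint per coordinate the folding is an affine bijection, hence $\UFOG_g=\UF_g$ and the separation value is identically zero, and you use this to justify introducing several interior breakpoints. With one breakpoint, $\fold$ maps the interval $[\underline{\uescal}_i,\overline{\uescal}_i]$ to an L-shaped \emph{nonconvex} curve in $\mathbb{R}^2$ (this is exactly Figure~1 of the paper); the map is injective and piecewise affine but not affine, and $\UFOG_g$---the intersection of the affine relaxation $\{\ufvec\,\colon\,\fold^+(\ufvec)\in\UE_g\}$ with the convex hull $\FBOXg$---is strictly larger than $\UF_g$ already in this case. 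For instance, with $\UE=[-2,2]$ and a breakpoint at $0$, the point $(1,1)$ satisfies $\fold^+((1,1))=0\in\UE$ and lies in the convex hull of the folded interval, yet it is not in $\UF$. Indeed, the paper's hardness construction uses exactly one breakpoint per coordinate and the incumbent $\ufscal_{ij}=1$ for all $i,j$, so strong NP-hardness already lives in the regime your argument dismisses as trivial; this misreading of where the gap $\UFOG_g\setminus\UF_g$ comes from propagates into the design of your gadget and would have to be repaired before the reduction could be completed.
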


Note, however, that $\UFO$ remains a valid outer approximation of $\UF$ for any subset of the constraints in $\mathcal{Z}$, and thus we can solve the separation problem~\eqref{eq:subproblem} heuristically and/or terminate our constraint generation approach before all violated constraints have been included. These approaches are well-established in the cutting plane literature, and we do not elaborate upon them further in the following. Instead, Section~\ref{sec:symmetric} shows how the separation problem~\eqref{eq:subproblem} can be solved exactly in polynomial time for permutation invariant uncertainty embeddings, and Section~\ref{sec:general} uses these insights \mbox{to approximately solve the separation problem for general embeddings.}

\subsection{Permutation Invariant Uncertainty Embeddings}
\label{sec:symmetric}

In this subsection, we assume that $G = 1$, that is, we focus on stochastic and robust problems (\emph{cf.}~Examples~\ref{ex:SP} and~\ref{ex:RP}). Consequently, we drop the subscript $g$ to minimize clutter. Denote by $\mathcal{S}$ the set of all signed permutation matrices in $\mathbb{R}^{\numue}$, that is, all matrices $\bm{S} \in \mathbb{R}^{\numue \times \numue}$ with elements $-1$, $0$ and $1$ that have exactly one non-zero element in each row and each column. Throughout this subsection, we make the following two assumptions about $Z$ and $\UE$.

\begin{assumption}[Permutation Invariance of Breakpoints]
\label{ass:symmetric_Z} The breakpoints $Z$ are permutation invariant, that is, $\smat \zvec \in Z$ whenever $\zvec \in Z$ and $\smat \in \mathcal{S}$. Additionally, $\nullvec\in Z$.
\end{assumption}

\begin{assumption}[Permutation Invariance of Embedded Support]
\label{ass:symmetric_concave}
\,
\begin{enumerate}[label=(\roman*)]
\item \label{ass:symmetric_U} The embedded support $\UE$ is permutation invariant, that is, $\smat \uevec \in \UE$ if $\uevec \in \UE$ and $\smat \in \mathcal{S}$.
\item \label{ass:concave} The maximal $l_1$-norm of the first $i$ components of the embedded support $\UE$,
$$
\eta(i) = \max_{\uevec \in \UE} \sum_{i'\in[i]} \lvert \uescal_{i'} \rvert,
$$
has non-increasing differences, that is, $\eta(i+1) - \eta(i) \leq \eta(i) - \eta(i-1)$ for all $i \in[\numue-1]$.
\end{enumerate}
\end{assumption}

Assumption~\ref{ass:symmetric_Z} implies that the breakpoints are symmetric around the origin as well as uniform across all axes. The assumption is relatively benign as the breakpoints are a design choice of the decision maker rather than a characteristic of the problem setting. 
Due to Assumption~\ref{ass:symmetric_Z}, the numbers of breakpoints $\numuef_i$ as well as the breakpoint locations $z_{ij}$ no longer depend on the index $i$, which allows us to drop the index in both quantities to further reduce clutter. Assumption~\ref{ass:symmetric_concave}~\ref{ass:symmetric_U} applies a similar condition as Assumption~\ref{ass:symmetric_Z} to the geometry of the embedded support~$\UE$. 
Additionally, Assumption~\ref{ass:symmetric_concave}~\ref{ass:concave} implies that as we consider additional components of $\uevec$, the incremental growth of its maximum $l_1$-norm over $\UE$ is monotonically non-increasing. 
Intuitively, this can be interpreted as a ``diminishing returns'' behavior, where each new component of $\uevec$ contributes less to increasing the maximum norm over $\UE$. 
Note that the condition on $\eta$ in Assumption~\ref{ass:symmetric_concave}~\ref{ass:concave} resembles the notion of submodularity. However, submodularity is typically leveraged when minimizing a function, whereas our setting involves maximization over $\eta$ (\emph{cf.}~Lemma~\ref{lem:symmetric_dist} in Appendix~\ref{appendix:proof:thm:symmetric}), and maximizing a submodular function is known to be NP-hard in general. Instead, one may intuitively regard the non-increasing differences condition as a discrete version of concavity, which enables efficient maximization over $\eta$. 
In contrast to the permutation invariance property, the non-increasing differences of $\eta$ do not appear to admit a straightforward geometric interpretation. 
However, by Lemma~\ref{lem:rotational_solution} in Appendix~\ref{appendix:proof:thm:symmetric}, Assumption~\ref{ass:symmetric_concave}~\ref{ass:concave} can be verified numerically by solving $\numue$ convex problems of the form $\max_{\uevec\in\UE}\sum_{i'\in[i]}\uescal_{i'}$ for $i\in [\numue]$, which is computationally tractable under the assumed convexity of $\U$. 
Additionally, we will see that Assumption~\ref{ass:symmetric_concave} is satisfied by various prominent uncertainty sets, for example by ellipsoids, budgets, norm balls, and their intersections. Together, Assumptions~\ref{ass:symmetric_Z} and~\ref{ass:symmetric_concave} allow us to dramatically reduce the search space when we determine the maximally violated constraint in~\eqref{eq:subproblem}.

\begin{theorem}[Polynomial-Time Separability]
\label{thm:symmetric}
    Assume that Assumptions~\ref{ass:symmetric_Z} and~\ref{ass:symmetric_concave} both hold.
    \begin{enumerate}[label=(\roman*)]
    \item \label{thm:symmetric_algo} The separation problem~\eqref{eq:subproblem} can be solved in time $\mathcal{O}(\numue \numuef \log \numue)$.
    \item \label{thm:symmetric_full} If for every $j \in [\numuef]$ there is at most one $i \in [\numue]$ such that $\eta(i) - \eta(i-1) \in [z_{j-1}, z_j)$, then the separation problem~\eqref{eq:subproblem} has a square solution $[z_j \bm{e}, -z_j \bm{e}]$ for some $j \in [\numuef / 2]$.
    \end{enumerate}
\end{theorem}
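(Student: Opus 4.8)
The plan is to exploit the separability of $\fdist$ together with the symmetry in Assumptions~\ref{ass:symmetric_Z} and~\ref{ass:symmetric_concave} to collapse the search over the combinatorial family $\mathcal{Z}$ into a single sort followed by a one-dimensional sweep. First I would decompose the objective of~\eqref{eq:subproblem}. Writing a candidate box as $z^-_i = z_{a_i}$ and $z^+_i = z_{b_i}$ with $0 \le a_i \le b_i \le \numuef$, the definition of $\fdist$ from Observation~\ref{obs:distance_formalization} yields $\fdist(\ufvec, [\zvec^-, \zvec^+]) = \sum_{i\in[\numue]} \fdist_i$ with $\fdist_i = \sum_{j = b_i+1}^{\numuef} \ufscal_{ij} + z_{a_i} - z_0 - \sum_{j=1}^{a_i} \ufscal_{ij}$, so the reward term is fully separable across coordinates and depends on coordinate $i$ only through its two box edges. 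The entire coupling between coordinates therefore resides in the penalty $\maxdist$.

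The crux is a tractable description of $\maxdist$, which I would obtain in two steps. Since the per-coordinate $l_1$ distance to the interval $[z^-_i, z^+_i]$ equals $(|\uescal_i| - \rho_i)_+$ with $\rho_i = \min\{z^+_i, -z^-_i\}$ once we flip the sign of $\uescal_i$ towards the nearer box face, and since Assumption~\ref{ass:symmetric_concave}~(i) lets us flip signs and permute the coordinates of any $\uevec \in \UE$ while staying in $\UE$, I would first establish that $\maxdist([\zvec^-,\zvec^+]) = \max_{\uevec\in\UE} \sum_{i\in[\numue]} (|\uescal_i| - \rho_i)_+$, restricting without loss to boxes containing the origin. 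Second, fixing the set $A$ of coordinates on which the adversary exceeds its threshold, permutation invariance gives $\max_{\uevec\in\UE}\sum_{i\in A}|\uescal_i| = \eta(|A|)$, so the adversary's value is $\eta(|A|) - \sum_{i\in A}\rho_i$, maximized by letting $A$ collect the smallest thresholds. After sorting $\rho_{(1)}\le\cdots\le\rho_{(\numue)}$ this yields the closed form $\maxdist = \max_{k\in[\numue]_0}(\eta(k) - \sum_{\ell\in[k]}\rho_{(\ell)})$. Assumption~\ref{ass:symmetric_concave}~(ii) is what makes this exact: because the marginals $\eta(\ell)-\eta(\ell-1)$ are non-increasing while the sorted thresholds $\rho_{(\ell)}$ are non-decreasing, the inner objective is concave in $k$, its optimal $A$ is a prefix of the sorted thresholds, and the maximizer of $\eta(k)$ can be routed so as to genuinely exceed the thresholds on $A$.

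Substituting this closed form turns~\eqref{eq:subproblem} into $\max_{\text{box}} [\sum_i \fdist_i - \max_k (\eta(k) - \sum_{\ell\in[k]}\rho_{(\ell)})]$, a max--min problem in which we select the per-coordinate edges (hence the thresholds $\rho_i$ and the separable rewards $\fdist_i$) while the adversary selects the crossover $k$. I expect this to be the main obstacle, since the sorted thresholds couple the coordinates and the order of $\max$ and $\min$ cannot be exchanged naively. I would resolve it by a rank-based exchange argument driven by Assumption~\ref{ass:symmetric_concave}~(ii): because the $\eta$-marginals are sorted in non-increasing order, in an optimal box the coordinate receiving the $\ell$-th smallest threshold is the one whose $\ufvec$-profile makes it cheapest to protect against the $\ell$-th largest marginal, so the threshold-to-coordinate assignment is governed by a single monotone matching. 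Concretely, I would sort the coordinates once ($\mathcal{O}(\numue\log\numue)$) and then sweep over the $\mathcal{O}(\numuef)$ admissible threshold levels $z_j$; for each level I optimize every $\fdist_i$ over its free edge ($\mathcal{O}(\numue)$) and evaluate the closed form for $\maxdist$ after sorting the induced thresholds ($\mathcal{O}(\numue\log\numue)$), for an overall bound of $\mathcal{O}(\numue\numuef\log\numue)$, establishing part (i).

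Finally, for part (ii) I would symmetrize. The hypothesis that each grid cell $[z_{j-1}, z_j)$ contains at most one marginal $\eta(i)-\eta(i-1)$ pins the crossover count to the number of marginals exceeding the common level $z_j$ and removes the degeneracy that would otherwise reward unequal thresholds. An exchange argument then shows that, under this hypothesis, any optimal box with unequal per-coordinate thresholds can be replaced by the symmetric hypercube $[z_j\unitvec, -z_j\unitvec]$ at a suitable level $j\in[\numuef/2]$ without decreasing the separation objective, because the cell condition freezes the threshold-to-marginal matching when the thresholds are equalized, so the induced changes in $\sum_i\fdist_i$ and in $\maxdist$ do not create a net loss; this establishes the existence of a square optimizer.
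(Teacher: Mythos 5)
Your first half is sound and essentially parallels the paper's own development: the separability of $\fdist$, the reduction to boxes that are symmetric about the origin, and the closed form $\maxdist = \max_k \bigl(\eta(k) - \sum_{\ell\in[k]}\rho_{(\ell)}\bigr)$ correspond to the paper's Lemmas on rotational solutions, symmetric optimal boxes, and the analytical representation of $\maxdist$. (One misattribution: that closed form needs only permutation invariance, i.e.\ Assumption~\ref{ass:symmetric_concave}~(i); the non-increasing differences in part~(ii) of the assumption are not what makes it exact, and this matters because concavity is in fact reserved for the step you leave open.)

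The genuine gap is in the step you yourself flag as the main obstacle: resolving the max--min coupling. You invoke a ``rank-based exchange argument'' and a sweep over levels $z_j$ in which you ``optimize every $\fdist_i$ over its free edge'' independently, but this candidate family is never defined precisely and is never shown to contain an optimum --- and per-coordinate independent optimization is exactly what the coupling through the sorted thresholds in $\maxdist$ forbids. The paper closes this gap with a specific structural lemma (its Lemma~EC.5), which uses the non-increasing differences of $\eta$ to prove that some optimal symmetric box has half-widths taking only \emph{two adjacent} breakpoint values $z_j$ and $z_{j+1}$, with the coordinate-to-level assignment given by a non-ascending sort of $\delta_{ij} = z_{j+1}-z_j+\altufscal_{i,j+1}-\altufscal_{i,\numuef-j}$. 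Note that this sorting key depends on $j$, so the ordering must be recomputed for each of the $\numuef/2$ levels (hence the $\mathcal{O}(\numue\numuef\log\numue)$ bound), contradicting your plan to ``sort the coordinates once.'' Without this two-adjacent-level lemma, your sweep either enumerates only single-level (square) boxes --- which are not optimal in general; that is precisely why part~(ii) needs an extra hypothesis --- or enumerates an unspecified set whose optimality is unproven. The same gap propagates to part~(ii): your exchange argument is asserted but not carried out, whereas the paper's proof compares the algorithm's running objective at the mixed-level optimum $(i^\star,j^\star)$ against the two square candidates at levels $z_{j^\star}$ and $z_{j^\star+1}$, using the at-most-one-marginal-per-cell hypothesis together with the inequality $\altufscal_{i,j+1}\leq\altufscal_{i,\numuef-j}$ for $j\geq\numuef/2$ (which follows from $\altufvec\in\UFOG$, Lemma~\ref{lem:rectangular_hull}, and Assumption~\ref{ass:symmetric_Z}) --- ingredients your sketch never engages.
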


While Theorem~\ref{thm:symmetric}~\ref{thm:symmetric_algo} guarantees that the separation problem~\eqref{eq:subproblem} can be solved in polynomial time, our constraint generation approach may still require exponentially many iterations. In contrast, the condition of Theorem~\ref{thm:symmetric}~\ref{thm:symmetric_full} ensures that $J / 2$ cuts are sufficient to solve the separation problem~\eqref{eq:subproblem} for all possible incumbent solutions. In this case, we can directly add those $J / 2$ cuts to each lifted support subset $\UFO_g$, instead of executing a constraint generation approach. As $Z$ is a design choice, we can select it such that condition~\ref{thm:symmetric_full} of Theorem~\ref{thm:symmetric} holds whenever $\eta$ has strictly decreasing differences, that is, whenever $\eta(i+1) - \eta(i) < \eta(i) - \eta(i-1)$ for all $i$. In this context, a straightforward choice of permutation invariant breakpoints that satisfies the property of strictly decreasing differences is
\begin{equation}
\label{eq:full_break_points}
    Z = \Big( \left\{ \eta(i) - \eta(i-1) \, \colon \, i \in [\numue] \right\} \, \cup\, \{0\} \cup \, \left\{ -(\eta(i) - \eta(i-1)) \, \colon \, i \in [\numue] \right\} \Big)^\numue,
\end{equation}
or any superset thereof.

\begin{proposition}[Efficient Separation for Norm-Balls]
\label{pro:pballs}
Any $l_p$-norm ball embedded support $\{\uevec \,\colon\, \norm{\uevec}_{p} \leq \delta\}$ with $1\leq p \leq \infty$ and $\delta>0$ satisfies Assumption~\ref{ass:symmetric_concave}. 
\end{proposition}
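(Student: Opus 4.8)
The plan is to verify the two conditions of Assumption~\ref{ass:symmetric_concave} separately for the embedded support $\UE = \{\uevec : \norm{\uevec}_p \leq \delta\}$. Condition~\ref{ass:symmetric_U}, permutation invariance, is immediate: every signed permutation matrix $\smat \in \mathcal{S}$ satisfies $\norm{\smat \uevec}_p = \norm{\uevec}_p$, since applying $\smat$ only reorders the components of $\uevec$ and flips some of their signs, neither of which alters the multiset $\{|\uescal_1|, \dots, |\uescal_\numue|\}$ and hence $\sum_i |\uescal_i|^p$ (resp.\ $\max_i |\uescal_i|$ for $p=\infty$). Thus $\uevec\in\UE$ if and only if $\smat\uevec\in\UE$.

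For condition~\ref{ass:concave}, I would first derive a closed form for $\eta(i)$. Since the components $\uescal_{i'}$ with $i' > i$ do not enter the objective $\sum_{i'\in[i]}|\uescal_{i'}|$ but do consume the norm budget, an optimal $\uevec$ sets them to zero, reducing the problem to maximizing the $l_1$-norm of an $i$-dimensional vector subject to an $l_p$-norm bound of $\delta$. By Hölder's inequality (or, equivalently, the standard finite-dimensional norm inequality $\norm{\cdot}_1 \leq i^{1-1/p}\norm{\cdot}_p$ on $\mathbb{R}^i$), this maximum equals $\delta\, i^{1-1/p}$, attained when the $i$ free components share the common magnitude $\delta / i^{1/p}$ (with the convention $1/\infty = 0$, which recovers $\eta(i) = i\delta$ for $p = \infty$ and $\eta(i) = \delta$ for $p = 1$). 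Hence $\eta(i) = \delta\, i^{\alpha}$ for $i\geq 1$, with exponent $\alpha = 1 - 1/p \in [0,1]$, while $\eta(0) = 0$.

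It then remains to check that the differences $\eta(i) - \eta(i-1)$ are non-increasing. Because $t \mapsto \delta t^\alpha$ is concave on $[1,\infty)$ for $\alpha \in [0,1]$, its consecutive increments $\eta(i) - \eta(i-1) = \delta\bigl(i^\alpha - (i-1)^\alpha\bigr)$ are non-increasing for $i \geq 2$. The only remaining comparison involves the first difference $\eta(1) - \eta(0) = \delta$, for which I would note that $\eta(2) - \eta(1) = \delta(2^\alpha - 1) \leq \delta$ since $2^\alpha \leq 2$ whenever $\alpha \leq 1$; this yields $\eta(1) - \eta(0) \geq \eta(2) - \eta(1)$ and completes the verification. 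I do not anticipate a genuine obstacle: the argument rests entirely on the elementary closed form for $\eta$, and the only point that warrants care is the behaviour at the boundary index $i = 0$ (together with the degenerate cases $p \in \{1, \infty\}$), which the concavity argument handles once the first difference is treated explicitly.
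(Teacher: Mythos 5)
Your proposal is correct and takes essentially the same approach as the paper: both establish permutation invariance directly from the norm's invariance under signed permutations, derive the closed form $\eta(i)=\delta\, i^{1-1/p}$, and conclude that the non-increasing differences follow from concavity of the power function $i \mapsto i^{1-1/p}$. The only minor difference is that you obtain the closed form via H\"older's inequality, whereas the paper invokes its Lemma~\ref{lem:rotational_solution} to reduce to equal-component maximizers; your explicit treatment of the boundary comparison $\eta(2)-\eta(1)\leq\eta(1)-\eta(0)$ and of the degenerate cases $p\in\{1,\infty\}$ is, if anything, slightly more careful than the paper's direct appeal to concavity.
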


\begin{proposition}[Efficient Separation for Intersections]
\label{pro:intersections}
Let $\UE_1, \dots, \UE_n$ with $\eta_1,\dots,\eta_n $ satisfy Assumption~\ref{ass:symmetric_concave}. 
Then $\UE =\bigcap_{h\in [n]} \UE_h$ also satisfies Assumption~\ref{ass:symmetric_concave} for $\eta(i) = \min \{ \eta_h(i) \, \colon \, h\in[n] \}$.
\end{proposition}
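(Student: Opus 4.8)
The plan is to verify both parts of Assumption~\ref{ass:symmetric_concave} for $\UE=\bigcap_{h\in[n]}\UE_h$ and, along the way, to show that the quantity $\eta_{\UE}(i)=\max_{\uevec\in\UE}\sum_{i'\in[i]}\lvert\uescal_{i'}\rvert$ associated with $\UE$ coincides with $\min\{\eta_h(i):h\in[n]\}$. I would split this into three steps: (i) permutation invariance of $\UE$; (ii) the identity $\eta_{\UE}(i)=\min_h\eta_h(i)$ for every $i\in[\numue]$; and (iii) the non-increasing-differences property of the resulting function. Throughout I use that each $\UE_h$ is convex, which holds for all sets of interest (norm balls, ellipsoids, budgets, and their intersections); this is essential, since the claimed identity is in fact false for general non-convex permutation-invariant sets.

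Part (i) is immediate: if $\uevec\in\UE$, then $\uevec\in\UE_h$ for every $h$, so $\smat\uevec\in\UE_h$ for every $h$ by Assumption~\ref{ass:symmetric_concave}~\ref{ass:symmetric_U}, whence $\smat\uevec\in\UE$. The easy direction of (ii) is equally direct: since $\UE\subseteq\UE_h$, maximizing over the smaller set gives $\eta_{\UE}(i)\leq\eta_h(i)$ for every $h$, hence $\eta_{\UE}(i)\leq\min_h\eta_h(i)$. I also record one structural fact used below: every nonempty convex permutation-invariant set $\UE_h$ contains $\nullvec$, because $-\uevec\in\UE_h$ for any $\uevec\in\UE_h$ (apply $-\unitytary\in\mathcal{S}$), and then $\nullvec=\tfrac12(\uevec+(-\uevec))\in\UE_h$ by convexity.

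Part (ii), the reverse inequality, is the crux. Let $m=\min_h\eta_h(i)$, and consider the single point $\uevec^{(i)}=\tfrac{m}{i}\sum_{i'\in[i]}\unitvec_{i'}$, which carries $l_1$-mass exactly $m$ on its first $i$ coordinates. I will argue $\uevec^{(i)}\in\UE_h$ for every $h$, so that $\uevec^{(i)}\in\UE$ and therefore $\eta_{\UE}(i)\geq m$. Fix $h$; since $\eta_h(i)\geq m$, compactness of $\UE_h$ yields a maximizer $\uevec\in\UE_h$ with $\sum_{i'\in[i]}\lvert\uescal_{i'}\rvert=\eta_h(i)$. I then symmetrize this maximizer in three convexity-preserving steps, each of which remains in $\UE_h$ because it averages signed-permutation images of a point of $\UE_h$ (each image lies in $\UE_h$ by Assumption~\ref{ass:symmetric_concave}~\ref{ass:symmetric_U}, and the average lies in $\UE_h$ by convexity): first flip signs so that the first $i$ components are nonnegative; then average over the $i!$ coordinate permutations of the first $i$ indices, obtaining a point whose first $i$ components all equal $\eta_h(i)/i$; then average over the sign flips of the trailing coordinates $i+1,\dots,\numue$, obtaining $\tfrac{\eta_h(i)}{i}\sum_{i'\in[i]}\unitvec_{i'}\in\UE_h$. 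Since $\nullvec\in\UE_h$ and $0\leq m\leq\eta_h(i)$, the point $\uevec^{(i)}$ lies on the segment between $\nullvec$ and this symmetrized point, hence in $\UE_h$ by convexity. As $h$ was arbitrary, $\uevec^{(i)}\in\UE$, which establishes $\eta_{\UE}(i)\geq m$ and completes (ii).

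Part (iii) then follows for free from the fact that the pointwise minimum of concave sequences is concave, together with the observation that ``non-increasing differences'' is exactly discrete concavity. Concretely, for each $i\in[\numue-1]$ pick $h^\star$ attaining $\min_h\eta_h(i)$; concavity of $\eta_{h^\star}$ gives $2\eta(i)=2\eta_{h^\star}(i)\geq\eta_{h^\star}(i-1)+\eta_{h^\star}(i+1)\geq\eta(i-1)+\eta(i+1)$, which rearranges to $\eta(i+1)-\eta(i)\leq\eta(i)-\eta(i-1)$, as required. I expect the main obstacle to be Part (ii): a naive attempt to transplant a maximizer of one $\UE_h$ into another fails, and the whole point is that convexity \emph{together} with permutation invariance forces the canonical uniform point $\uevec^{(i)}$ into every $\UE_h$ with $\eta_h(i)\geq m$. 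This is also precisely where convexity is indispensable, as witnessed by a non-convex counterexample: an $l_1$-ball enlarged by axis spikes intersected with one enlarged by diagonal spikes retains only the $l_1$-ball, whose $\eta$ is strictly smaller than $\min_h\eta_h$.
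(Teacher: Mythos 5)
Your proof is correct, and at the level of this proposition it follows the same blueprint as the paper's: permutation invariance passes trivially to the intersection, the identity $\eta(i)=\min_{h\in[n]}\eta_h(i)$ is established through the uniform diagonal point $\tfrac{\eta_h(i)}{i}\sum_{i'\in[i]}\unitvec_{i'}$ together with convexity and $\nullvec\in\UE_h$, and the non-increasing-differences property is inherited by the pointwise minimum. The differences lie in what you prove versus what the paper cites or asserts. Where you symmetrize a maximizer by hand (sign flips, then averaging over the $i!$ permutations of the leading coordinates, then averaging out the trailing coordinates), the paper invokes its Lemma~\ref{lem:rotational_solution}, which reduces to the nonnegative orthant and defers to Lemma~3 of \citet{BenTal2020}; your argument is thus a self-contained re-derivation of that lemma for convex sets. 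You also supply proofs of the two facts the paper uses without argument: that $\nullvec\in\UE_h$ (via $-\unitytary\in\mathcal{S}$ and convexity) and that the minimum of functions with non-increasing differences again has non-increasing differences. Finally, your remark that convexity is indispensable is a fair observation about the statement itself: Assumption~\ref{ass:symmetric_concave} does not require convexity, yet the paper's own proof of Proposition~\ref{pro:intersections} also invokes convexity and $\nullvec$-membership of each set, and your spike counterexample correctly shows that the formula $\eta=\min_h\eta_h$ can fail for non-convex permutation-invariant sets (the intersection there collapses to the $l_1$-ball, whose $\eta$ is strictly below the minimum). So your proof is sound and, if anything, more complete than the paper's, with the implicit convexity hypothesis made explicit; this is consistent with the paper's ambient setting, where the supports are conic representable and hence convex.
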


Combining Propositions~\ref{pro:pballs} and~\ref{pro:intersections}, we cover several commonly used supports, including ellipsoidal, budgeted, and intersected norm-ball supports.

\subsection{General Uncertainty Embeddings}
\label{sec:general}

We now revisit the general case where $G \geq 1$, which includes the stochastic, robust and data-driven settings (\emph{cf}.~Examples~\ref{ex:SP}--\ref{ex:DP}). Whenever $G > 1$, Assumption~\ref{ass:symmetric_concave} is most likely violated, and thus the polynomial-time separability of Theorem~\ref{thm:symmetric} no longer applies. We can, however, continue to apply the polynomial-time separation algorithms of Theorem~\ref{thm:symmetric} as if Assumption~\ref{ass:symmetric_concave} was satisfied. We next show that this heuristic has attractive approximation guarantees.

\begin{proposition}
    \label{pro:bounding_by_symmetric}
    Let $Z$ satisfy Assumption~\ref{ass:symmetric_Z}, 
    let $\UE = \{\UE_g\}_{g\in[G]}$ be the embedded support, and let
    $\UE^\mathrm{P} = \{ \UE^\mathrm{P}_g \}_{g \in [G]}$ be such that $\UE^\mathrm{P}_g \supseteq \UE_g$ and $\UE^\mathrm{P}_g$ satisfies Assumption~\ref{ass:symmetric_concave} for all $g \in [G]$. 
    Let $\UFO_g$ and $\UFOV{}_g^\mathrm{P}$ be the outer approximations of $\UF_g$ and $\U^\mathrm{P}_g{}'$ generated by the exponential-sized description~\eqref{eq:new-and-shiny-outer-approximation}.
    \begin{enumerate}[label=(\roman*)]
    \item \label{pro:bounding_by_symmetric:partial} The outer approximation $\UFOV{}^\mathrm{A}$ of $\UF$ generated by the separation algorithm of Section~\ref{sec:symmetric} satisfies 
        $$
        \aff\lproblem(\PUF, \UFO) \leq \aff\lproblem(\PUF, \UFOV{}^\mathrm{A}) \leq \aff\lproblem(\PUF, \UFOV{}^\mathrm{P}).
        $$
    \item \label{pro:bounding_by_symmetric:full} If $\UE^\mathrm{P}$ satisfies condition~\ref{thm:symmetric_full} of Theorem~\ref{thm:symmetric}, then statement (i) of this proposition continues to hold if we restrict each $\UFOV{}^\mathrm{A}_g$ to square cuts $[z_j \bm{e}, -z_j \bm{e}]$ for all $j \in [\numuef / 2]$.
    \end{enumerate}
\end{proposition}

Proposition~\ref{pro:bounding_by_symmetric}~\ref{pro:bounding_by_symmetric:partial} shows that applying our polynomial time separation algorithm of Section~\ref{sec:symmetric} directly to a general family of support sets $\U$ provides a conservative approximation of problem~\ref{eq:P} that is at least as tight as first circumscribing $\U$ by any outer approximation that satisfies Assumptions~\ref{ass:symmetric_Z} and~\ref{ass:symmetric_concave} and then applying our separation algorithm to that outer circumscription. Proposition~\ref{pro:bounding_by_symmetric}~\ref{pro:bounding_by_symmetric:full} strengthens this result to the subclass of circumscriptions that satisfy the strictly decreasing differences condition of Theorem~\ref{thm:symmetric}~\ref{thm:symmetric_full}.

\section{Approximation Guarantees}
\label{sec:bounds}

As an immediate implication of \citet[Theorem~6]{Feigea2007}, no finite approximation bounds exist for the general setting of $\baseproblem(\PU, \U)$ presented in Section~\ref{sec:lift} unless $P=NP$. 
However, our policies admit strong approximation bounds under assumptions commonly required in the literature on theoretical bounds for adjustable decision making \citep[see, e.g.,][]{Bertsimas2012, BenTal2020, Han2025}.

\begin{assumption}
\label{ass:approximation}
\,
\begin{enumerate}[label=(\roman*)]
    \item \label{ass:approximation:obj} The objective function $\cvec(\uvec)^\intercal \xvec(\uvec)$ is deterministic: $\cvec(\uvec)^\intercal \xvec(\uvec) = \cvec^\intercal\xvec_0$. 
    \item \label{ass:approximation:rhs} The affine function $\bvec_g(\uvec)$ in the constraint right-hand side has non-positive coefficients for each $g\in [G]$: $\bvec_g(\uvec) = \bmat_g \uvec + \hat{\bvec}_g$ with $\bmat_g, \hat{\bvec}_g \leq \nullvec$. 
    \item \label{ass:approximation:U} The support $\U_g$ is non-negative, full-dimensional, and down-monotone for each $g\in [G]$: $\forall \uvec\in\U_g, \nullvec\leq\hat{\uvec}\leq\uvec \Rightarrow \hat{\uvec}\in\U_g$. 
\end{enumerate}
\end{assumption}

While significantly restricting the general setting of $\baseproblem(\PU, \U)$, Assumption~\ref{ass:approximation} still covers a wide range of relevant problem classes, including robust capacity expansion \citep{Laguna1998}, inventory control \citep{Bertsimas2015}, and network design \citep{Wang2020}. 
Given part~\ref{ass:approximation:U} of Assumption~\ref{ass:approximation}, we can always choose $\embed$, such that the following also holds. 
\begin{assumption}
\label{ass:approximation:UE} The embedded support $\UE_g$ is convex, non-negative, full-dimensional with $e_i\in\UE_g$ for all $i\in[\numue]$, and down-monotone for each $g\in[G]$. 
\end{assumption}

Clearly, our piecewise affine policies inherit all approximation guarantees of affine policies. 
We further tighten these guarantees using \textit{scaling factors} $\beta(\uevec) = \min\{\beta'\in\mathbb{R}_+ \colon \exists \uevec' \in \bigcap_{g\in [G]}\UE_g, \beta'\uevec' \geq \uevec\}$  and the concept of \textit{dominating sets}, that is, sets $\hat{\UE}$ such that $\max_{\uevec\in \hat{\UE}} \beta(\uevec)$ is finite \citep{BenTal2020}.

\begin{proposition}
\label{pro:bounds}
Let $\baseproblem(\PU, \U)$ be a problem instance that satisfies Assumption~\ref{ass:approximation}, and let $\embed$ be such that Assumption~\ref{ass:approximation:UE} holds. Then,
$$
\aff\lproblem(\PUF, \UFO) \leq \min_{\zvec\in Z} \{\beta(\zvec) + \max_{g\in[G]}\maxdist_g([\nullvec, \zvec])\} \cdot \baseproblem(\PU, \U)
$$
and 
$$
\aff\lproblem(\PUF, \UFOV{}^\mathrm{A}) \leq \min_{\zvec\in Z^\mathrm{A}} \{\beta(\zvec) + \max_{g\in[G]}\maxdist_g([\nullvec, \zvec])\} \cdot \baseproblem(\PU, \U),
$$
where $Z^\mathrm{A}\subseteq Z$ is the subset of all gridpoints $\zvec$ such that $[\nullvec, \zvec]\in\mathcal{Z}$ satisfies all cuts of type \eqref{eq:counterpart_dual_cuts}.
\end{proposition}
Proposition~\ref{pro:bounds} directly connects the approximation bounds to gridpoints $\zvec$ such that $[\nullvec, \zvec]$ satisfies our new cuts~\eqref{eq:counterpart_dual_cuts}. 
Enforcing cuts for critical gridpoints secures strong guarantees, even if not all cuts were added. 
Moreover, choosing the right breakpoints has a significant impact on the bounds.
In particular, including a single breakpoint per dimension motivated by the policies introduced by \citet{BenTal2020} replicates all approximation bounds of their policies. 
These bounds also extend to the multi-stage setting and are, to the best of our knowledge, the tightest a priori bounds known so far. 

\section{Numerical Experiments}
\label{sec:numerical}
We test our piecewise affine decision rules on a variant of the single-product multi-period inventory replenishment problem studied by \citet{See2010} and \citet{Bertsimas2023}:

\begin{equation*}
\begin{aligned}
\minimize_{\xvec, y, \bm{I}} \quad& \mathbb{E}_\PU \left[\sum_{t\in[T]} c_0 y + c_t \lvert x_t(\suvec^{t-1})\rvert + h_t (I_t(\suvec^t))_+ + b_t (-I_t(\suvec^t))_+\right]\\
\st \quad& I_{t}(\suvec^{t}) = I_{t-1}(\suvec^{t-1}) + y + x_t(\suvec^{t-1}) - \suscal_t && \PU \pas,\forall t\in[T] \\
& \sum_{t\in[T]} (-I_t(\suvec^t))_+ \leq s && \PU \pas\\
&y\geq0,\; \underline{x}_t \leq x_t(\suvec^{t-1}) \leq \overline{x}_t && \PU \pas, \forall t\in[T]
\end{aligned}
\end{equation*}

In this problem, the decision maker pre-commits to a single replenishment quantity $y$ that applies to each time period $t \in [T]$ here-and-now, and reactively adjusts this quantity by $x_t(\suvec^{t-1})$ wait-and-see upon observing the uncertain customer demands $\suscal_{t-1}$. The objective function minimizes the aggregate sum of preorder costs $c_0 y$, adjustment costs $c_t \lvert x_t(\suvec^{t-1})\rvert$, inventory holding costs $h_t (I_t(\suvec^t))_+$ and backlogging costs $b_t (-I_t(\suvec^t))_+$ across all time periods. The constraints record the inventory levels $I_t(\suvec^{t})$, restrict the backlogged demands to at most $s$ units, and enforce non-negativity as well as the adjustment limits $[\underline{x}_t, \overline{x}_t]$, respectively. 

In the variant studied by \citet{See2010} and \citet{Bertsimas2023}, affine decision rules yield close to optimal policies. 
To make the problem more challenging, we added a pre-committed replenishment decision $y$ that is dynamically adjustable by $x_t$, and the service level constraint involving $s$. 
Other than that, we use parameters similar to \citet{See2010} and \citet{Bertsimas2023}. 
We set the replenishment costs to $c_0=0.01$, the adjustment costs to $c_t = 0.1$, the inventory holding costs to $h_t=0.04$, the backlogging costs to $b_t = 0.2$ for $t \in [T-1]$ and $b_T=2$, and the adjustment limits to $[\underline{x}_t, \overline{x}_t]=[-200, 200]$ for all $t\in[T]$. 
Further, we fix the initial inventory to $I_0(\cdot)=0$. 
We assume that the demands $\suvec$ follow the autoregressive process $\suscal_t = \mu + \nu\big(\tilde{\uscalbase}_t + \alpha\sum_{t'\in[t-1]} \tilde{\uscalbase}_{t'}\big)$, $t\in[T]$, where $\mu = 200$, $\nu = \mu / \sqrt{T}$, $\alpha\in\{0, 0.25, 0.5\}$ and the constituent uncertainties $\tilde{\uvecbase}$ are uniformly distributed over the unit 2-norm ball $\{ \uvecbase \, \colon \, \norm{\uvecbase}_2 \leq 1 \}$, restricted to satisfy the limits $\suvec_t\in[0, 2\mu]$. 
This demand structure reflects independent demand shocks $\tilde{\uscalbase}_t$ that have a permanent impact on future demands, and it is commonly used in the literature to capture long-term impacts of disruptions \citep{Muth1960, Graves1999, See2010, Bertsimas2023}. 
We assume a service level of $s = 0.2 \nu T$ and use the identity embedding $\embed=\unitytary$. 
Thus, $\UE = \U$ only satisfies Assumption~\ref{ass:symmetric_concave} when $\alpha = 0$, allowing us to investigate the performance of $\UFOV{}^\mathrm{A}$ when Assumption~\ref{ass:symmetric_concave} is violated. 
Figure~\ref{fig:experiment:uncertainty} visualizes the retraction of our new cuts and the autoregressive support $\U$ for different values of $\alpha$. 
Note that we do not display the retraction of the outer approximations $\UFO$, as they always retract to $\U$ by construction (i.e., $\retr(\UFO) = \U$) and would thus make the visualization uninformative. As $\alpha$ increases, the figure shows that our cuts become progressively weaker. 

\begin{figure}
    \centering
\begin{tikzpicture}
\color{black}

\newcommand{\makebase}[1]{
\coordinate (z1) at (2,-.2);
\draw[thin, dashed, opacity=.5] (z1) -- ++(0,4.1);
\draw (z1) node[below]{$\mu$};

\coordinate (z2) at (-.2,2);
\draw[thin, dashed, opacity=.5] (z2) -- ++(4,0);
\draw (z2) node[left]{$\mu$};

\fill [black, opacity=.5] (2,2) circle (.05);

\draw[thick, ->] (-0.3,0) -- (4,0) node[below] {$\uscal_1$};
\draw[thick, ->] (0,-.3) -- (0,4) node[above] {$\uscal_2$};

\draw (2, 4.6) node {$\alpha = #1$};
}

\begin{scope}[shift={(0,0)}]
\makebase{0}

\filldraw[thick, blue, fill opacity=.02] (3.5, 1.38) -- (2.62, 0.5) -- (1.38, 0.5) -- (0.5, 1.38) -- (0.5, 2.62) -- (1.38, 3.5) -- (2.62, 3.5) -- (3.5, 2.62) -- cycle;
\begin{scope}[shift={(2,2)}]
  \filldraw[thick,green,rotate=-45,fill opacity=.15] (0,0) ellipse [x radius=1.5, y radius=1.5];
\end{scope}
\end{scope}

\begin{scope}[shift={(5,0)}]
\makebase{0.25}

\filldraw[thick, blue, fill opacity=.02] (3.5, 1.1) -- (2.86, 0.45) -- (1.14, 0.45) -- (0.5, 1.1) -- (0.5, 2.9) -- (1.14, 3.55) -- (2.86, 3.55) -- (3.5, 2.9) -- cycle;
\begin{scope}[shift={(2,2)}]
  \filldraw[thick,green,rotate=-41.4,fill opacity=.15] (0,0) ellipse [x radius=1.32, y radius=1.7];
\end{scope}
\end{scope}

\begin{scope}[shift={(10,0)}]
\makebase{0.5}

\filldraw[thick, blue, fill opacity=.02] (3.5, 0.8) -- (3.02, 0.32) -- (0.98, 0.32) -- (0.5, 0.8) -- (0.5, 3.2) -- (0.98, 3.68) -- (3.02, 3.68) -- (3.5, 3.2) -- cycle;
\begin{scope}[shift={(2,2)}]
  \filldraw[thick,green,rotate=-38,fill opacity=.15] (0,0) ellipse [x radius=1.17, y radius=1.92];
\end{scope}
\end{scope}
\end{tikzpicture}
    \caption{Visualization of the autoregressive support $\U$ (green, ellipsis) and the retracted cuts $\retr(\{ 
\ufvec \in \FBOX \, \colon \,
\fdist(\ufvec, [\mu\unitvec, \mu\unitvec]) 
\leq 
\maxdist ([\mu\unitvec, \mu\unitvec]) 
\})$ (blue) for different autoregressive factors $\alpha$ with a single breakpoint $\mu$ in two dimensions.}
    \label{fig:experiment:uncertainty}
\end{figure}
    
In our experiments we benchmark our piecewise affine policies with one breakpoint at $\mu$ in each dimension (LIFT1), our piecewise affine policies with three equidistant breakpoints $\{\mu/2,\mu,3\mu/2\}$ (LIFT3), our piecewise affine policies with three breakpoints $\{\mu+\eta(\lfloor\numue/2\rfloor)-\eta(\lfloor\numue/2\rfloor+1), \mu, \mu+\eta(\lfloor\numue/2\rfloor+1)-\eta(\lfloor\numue/2\rfloor)\}$ (LIFT3') as well as our piecewise affine policies with all $2T-1$ breakpoints as constructed in~\eqref{eq:full_break_points} (LIFTF) against their counterparts GLIFT1, GLIFT3, GLIFT3', and GLIFTF proposed by \cite{Georghiou2015} as well as classical affine policies (AFF). 
For the cut-based policies LIFT1, LIFT3, LIFT3', and LIFTF, we only use square cuts $[z_j\unitvec, -z_j\unitvec]$ for all $j\in[\numuef/2]$. 
These cuts are sufficient to achieve significant improvements. 
Appendix~\ref{appendix:exp:dyn-cut} demonstrates that dynamically adding more cuts only yields marginal benefits at the cost of significant increases in solution times. 

We compare all policies to dynamic programming benchmarks obtained by backward induction over a discretized grid-based state space combined with a grid search over the pre-committed replenishment quantity $y$. 
The discretized state comprises the 
inventory level $I_t(\uvec^t)$, 
the accumulated backlogged demand $\sum_{t'\in[t]} (-I_{t'}(\uvec^{t'}))_+$, 
the summed constituent uncertainty $\sum_{t'\in[t]} \uscalbase_{t'}$ 
and its squared 2-norm $\lvert\lvert\uscalbase^{t}\rvert\rvert^2_2$ for each $t\in[T]$. 
Together, the summed constituent uncertainty and its squared 2-norm uniquely determine the conditional distribution of the remaining uncertainties. 
We discretize the inventory level and backlogged demand by $\delta = 2.5$, the summed constituent uncertainty by $\delta/\nu$ and its squared 2-norm by $(\delta/\nu)^2$. 
This resolution corresponds to an effective demand discretization of approximately $1\%$ of the expected value $\mathbb{E}_{\PU}(\suscal_t) = 200$, and numerical tests, included in the accompanying repository, indicate that further reducing $\delta$ has a negligible impact on the results.
All experiments were run on an AMD EPYC 9334 2.70 GHz processor using Gurobi 11 with a single core per instance.

\begin{figure}[tb]
    \centering
    \includegraphics[width=.8\textwidth]{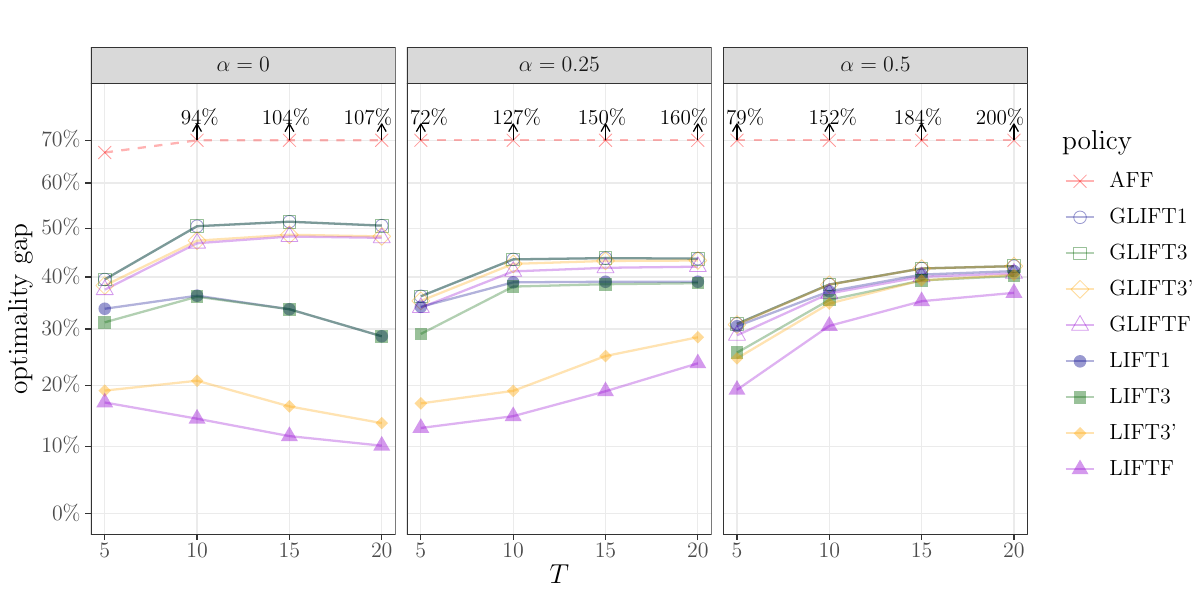}
    \caption{\textbf{Stochastic Programming.} Relative objective improvements of different piecewise affine policies over affine policies for different serial demand correlations $\alpha$ and time horizons $T$. 
    Values of AFF exceeding a $70\%$ optimality gap are not plotted at scale to ensure the visibility of relevant effects from piecewise affine policies.}
    \label{fig:inventory_service_so_performance}
\end{figure}

\begin{table}[tb]
\centering
\begin{tabular}{rrrrrrrrrr}
  \hline
T & AFF & GLIFT1 & GLIFT3 & GLIFT3' & GLIFTF & LIFT1 & LIFT3 & LIFT3' & LIFTF \\ 
  \hline
  5 & 0.00 & 0.01 & 0.01 & 0.02 & 0.04 & 0.01 & 0.01 & 0.02 & 0.08 \\ 
   10 & 0.02 & 0.04 & 0.09 & 0.10 & 3.34 & 0.04 & 0.09 & 0.08 & 6.30 \\ 
   15 & 0.08 & 0.18 & 0.54 & 0.69 & 34.17 & 0.12 & 0.52 & 0.61 & 81.12 \\ 
   20 & 0.24 & 0.47 & 1.20 & 1.44 & 192.49 & 0.31 & 1.18 & 1.59 & 384.49 \\ 
   \hline
\end{tabular}
\caption{\textbf{Stochastic Programming.} Average solution times (in seconds) for the different policies from Figure~\ref{fig:inventory_service_so_performance}.}
    \label{tab:inventory_service_so_runtime}
\end{table}

\paragraph{Stochastic Programming}

Figure~\ref{fig:inventory_service_so_performance} compares the performance of the different policies against the dynamic programming solutions. 
Piecewise affine policies reduce the optimality gap by approximately 50\% relative to their affine counterparts on symmetric instances with $\alpha = 0$, with the cuts proposed in this paper halving the remaining gap once more.
For instances violating Assumption~\ref{ass:symmetric_concave}, the improvements of the new cuts are smaller but remain significant, still almost halving the gap for $\alpha=0.25$ and reducing the gap by around $10\%$ for $\alpha=0.5$.
Table~\ref{tab:inventory_service_so_runtime} shows that these improvements need to be balanced against increases in computation times. 
Interestingly, LIFT3' consistently outperforms LIFT3, without significant increases in computational time, indicating the importance of the breakpoint selection. 
In particular, breakpoints should separate the support into subdomains with significant probability mass where the optimal policy exhibits significant non-linearities. 
In LIFT3' and LIFTF, we separate the support into subdomains based on geometry. 
Identifying subdomains where optimal affine policies change requires domain knowledge or extensive numerical testing. 
In our numerical experiments, for example, we found that placing a breakpoint at the mean demand $\mu$ was beneficial for all tested piecewise affine policies. 
Moreover, LIFT3' achieves similar results as LIFTF while incurring a fraction of the solution times. 
We thus conclude that the additional solution time from constructing breakpoints according to \eqref{eq:full_break_points} may not be worthwhile in practice.

\paragraph{Robust Optimization}

\begin{figure}[tb]
    \centering
    \includegraphics[width=.8\textwidth]{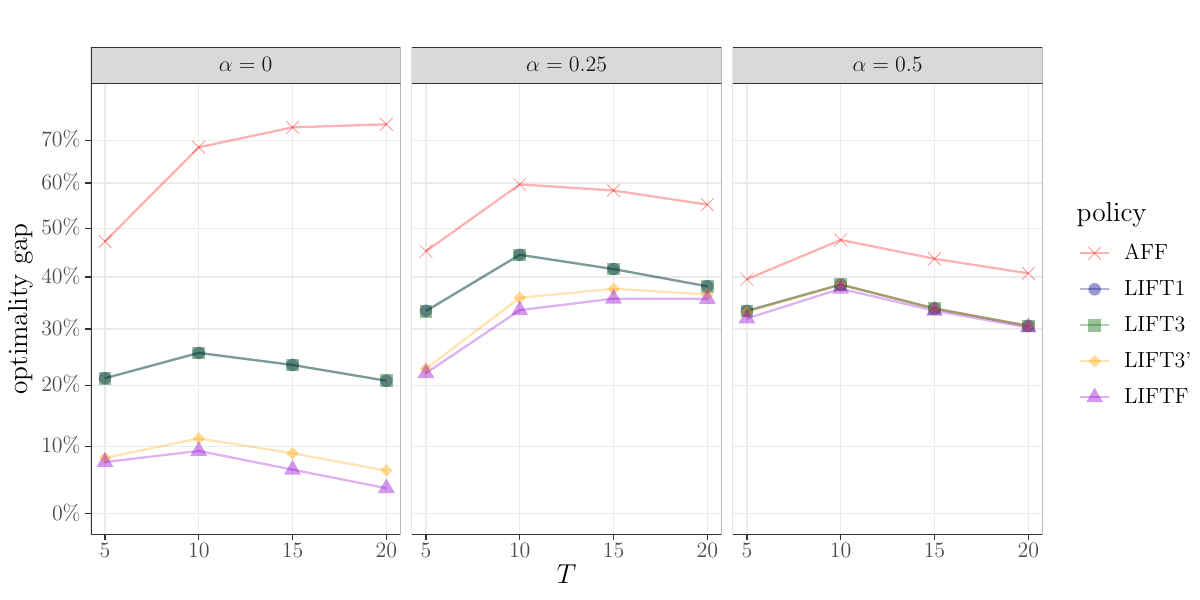}
    \caption{\textbf{Robust Optimization.} Relative objective improvements of different piecewise affine policies over affine policies.
    }
    \label{fig:inventory_service_ro_performance}
\end{figure}

We continue to use the same autoregressive demand process, but we now consider a worst-case objective and require all constraints to be satisfied for every possible demand realization. Figure~\ref{fig:inventory_service_ro_performance} reports the optimality gap improvements of our piecewise affine policies; we omit solution times, which follow a similar pattern as those for the stochastic case. 
We again observe significant improvements of our piecewise affine policies over the affine policies, which amount to an almost ten-fold reduction of the optimality gap for larger instances with a low serial demand correlation. 
Again, these improvements significantly decrease for instances violating Assumption~\ref{ass:symmetric_concave}. 
Note that the piecewise affine policies of \cite{Georghiou2015} coincide with the affine policies in this experiment and are thus omitted.

\paragraph{Data-Driven Optimization} 

\begin{figure}[tb]
    \centering
    \includegraphics[width=.8\textwidth]{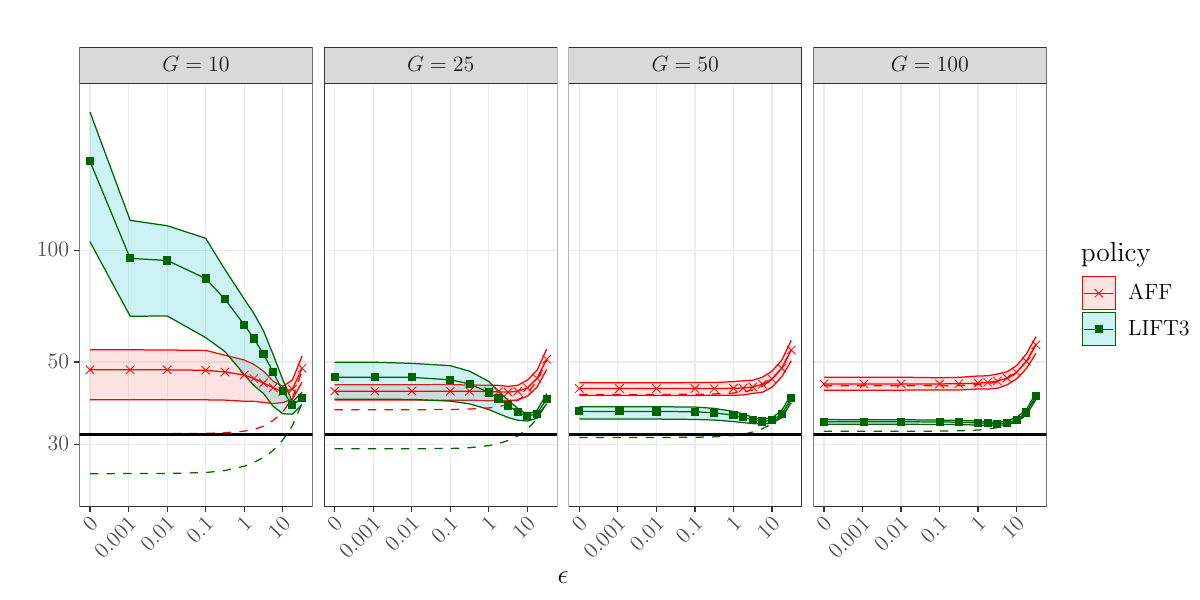}
    \caption{\textbf{Data-Driven Optimization.} Out-of-sample performance of affine and our piecewise affine policies for different radii $\epsilon$. The shaded regions represent the $(20\%, 80\%)$-confidence regions, the dashed lines report average performances on the training data, and the solid black line reports the dynamic programming solution.}
    \label{fig:inventory_dd_performance}
\end{figure}

\begin{figure}[tb]
    \centering
    \includegraphics[width=.8\textwidth]{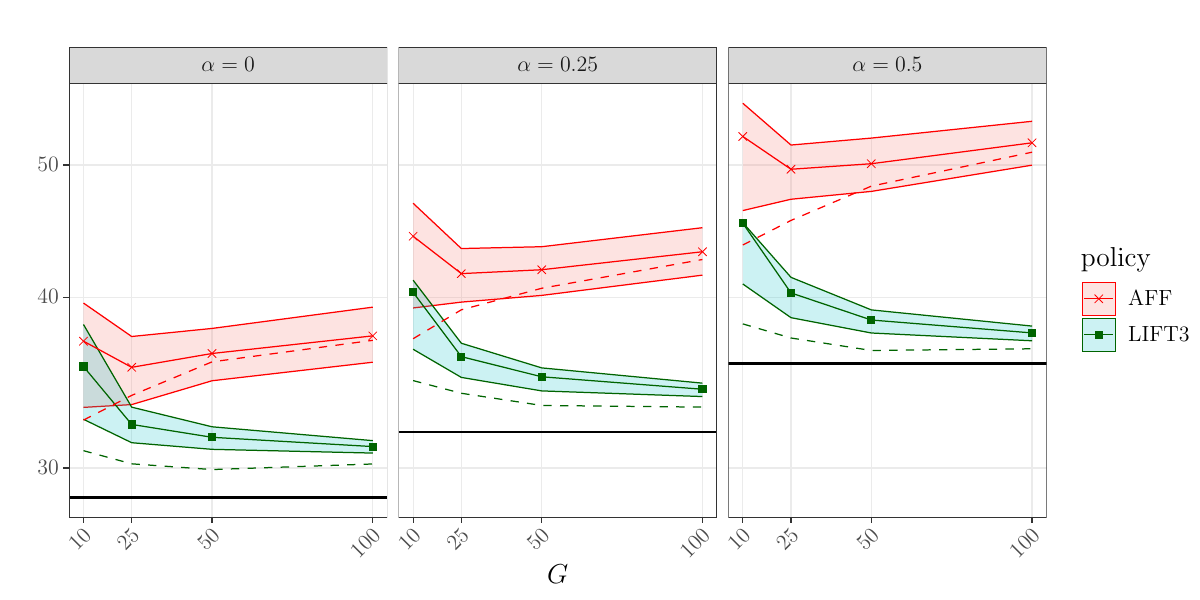}
    \caption{\textbf{Data-Driven Optimization.} Out-of-sample performance of affine and our piecewise affine policies where the radii $\epsilon$ are selected via 5-fold cross-validation. The shaded regions, dashed lines, and solid black line have the same interpretation as in Figure~\ref{fig:inventory_dd_performance}.}
    \label{fig:inventory_cv_performance}
\end{figure}

\begin{table}[tb]
\centering
\begin{tabular}{rrlrl}
  \hline
G & \multicolumn{2}{c}{AFF} & \multicolumn{2}{c}{LIFT3} \\ 
  \hline
 10 & 3.84 & 2.8\% & 38.18 & 3.2\% \\ 
   25 & 20.82 & 0.7\% & 217.31 & 1.1\% \\ 
   50 & 59.81 & 0.3\% & 693.90 & 0.8\% \\ 
  100 & 221.57 & 0.0\% & 2,684.33 & 0.4\% \\ 
   \hline
\end{tabular}
\caption{\textbf{Data-Driven Optimization.} Average solution times (in seconds) and percentage of service level violations for the different policies from Figure~\ref{fig:inventory_cv_performance}.}
\label{tab:inventory_dd_runtime}
\end{table}

For the data-driven framework, we generate training sample paths $\hat{\uvec}_1,\dots,\hat{\uvec}_G$ from the demand distribution and use the rectangular support sets $\U_g = \{\uvec \, \colon \, \lVert \uvec - \hat{\uvec}_g \rVert_\infty \leq \epsilon \}$ around each sample path $\hat{\uvec}_g$ with radii $\epsilon \in \{ 0 \} \cup \{ 10^x \, \colon \, x = -3, -2,\allowbreak -1,\allowbreak -1/2,\allowbreak 0, 1/4, 1/2, 3/4, 1, 5/4, 3/2 \}$. 
Our policy construction assumes no prior knowledge of the demand distribution. In particular, we choose the three equidistant breakpoints $\{x(\overline{\uvec}-\underline{\uvec})/4\, \colon \, x = 1, 2, 3\}$ for LIFT3 within the bounding box of the empirical domain $\U=\bigcup_{g\in[G]}\U_g$. 
Figure~\ref{fig:inventory_dd_performance} reports the out-of-sample performance of affine and piecewise affine policies over $10,{}000$ uncertainty realizations and $200$ randomly generated instances with demand correlation $\alpha=0.25$ for different training set sizes $G$ and radii $\epsilon$, and Figure~\ref{fig:inventory_cv_performance} reports the out-of-sample performances for different training set sizes $G$ and serial demand correlations $\alpha$ if the radii $\epsilon$ in our piecewise affine policies are selected via 5-fold cross-validation. 
While our hyperparameter tuning is purely based on the objective values, Table~\ref{tab:inventory_dd_runtime} shows that both policies fulfill the service level constraint with high probability. 
Both figures demonstrate that we can expect significant performance improvements when we employ our piecewise affine policies instead of affine ones. 
Table~\ref{tab:inventory_dd_runtime} shows that these improvements again have to be balanced against increases in computation times, which increase by around a factor of $10$. 
Both policies' training times increase sub-quadratically with the size $G$ of the training set. 
The effect of $\alpha$ on the training times is negligible and thus omitted.

{
\singlespacing
\bibliography{bibliography}

\begin{thebibliography}{}

\bibitem[Abbeel and Ng, 2004]{Abbeel2004}
Abbeel, P. and Ng, A.~Y. (2004).
\newblock Apprenticeship learning via inverse reinforcement learning.
\newblock In {\em Twenty-first International Conference on Machine Learning},
  pages 1--8, Banff, AB. ACM Press.

\bibitem[Bampou and Kuhn, 2011]{Bampou2011}
Bampou, D. and Kuhn, D. (2011).
\newblock Scenario-free stochastic programming with polynomial decision rules.
\newblock In {\em {IEEE} Conference on Decision and Control and European
  Control Conference}, pages 7806--7812, Orlando, FL. {IEEE}.

\bibitem[Baty et~al., 2024]{Baty2024}
Baty, L., Jungel, K., Klein, P.~S., Parmentier, A., and Schiffer, M. (2024).
\newblock Combinatorial optimization-enriched machine learning to solve the
  dynamic vehicle routing problem with time windows.
\newblock {\em Transportation Science}, 58(4):708--725.

\bibitem[Beck and Ben-Tal, 2009]{Beck2009}
Beck, A. and Ben-Tal, A. (2009).
\newblock Duality in robust optimization: Primal worst equals dual best.
\newblock {\em Operations Research Letters}, 37(1):1--6.

\bibitem[Ben-Tal et~al., 2004]{BenTal2004}
Ben-Tal, A., Goryashko, A., Guslitzer, E., and Nemirovski, A. (2004).
\newblock Adjustable robust solutions of uncertain linear programs.
\newblock {\em Mathematical Programming}, 99(2):351--376.

\bibitem[Ben-Tal et~al., 2020]{BenTal2020}
Ben-Tal, A., Housni, O.~E., and Goyal, V. (2020).
\newblock A tractable approach for designing piecewise affine policies in
  two-stage adjustable robust optimization.
\newblock {\em Mathematical Programming}, 182(1-2):57--102.

\bibitem[Bertsimas and Bidkhori, 2015]{Bertsimas2015}
Bertsimas, D. and Bidkhori, H. (2015).
\newblock On the performance of affine policies for two-stage adaptive
  optimization: a geometric perspective.
\newblock {\em Mathematical Programming}, 153(2):577--594.

\bibitem[Bertsimas and Georghiou, 2015]{Bertsimas2015b}
Bertsimas, D. and Georghiou, A. (2015).
\newblock Design of near optimal decision rules in multistage adaptive
  mixed-integer optimization.
\newblock {\em Operations Research}, 63(3):610--627.

\bibitem[Bertsimas and Goyal, 2012]{Bertsimas2012}
Bertsimas, D. and Goyal, V. (2012).
\newblock On the power and limitations of affine policies in two-stage adaptive
  optimization.
\newblock {\em Mathematical Programming}, 134(2):491--531.

\bibitem[Bertsimas et~al., 2011a]{Bertsimas2011}
Bertsimas, D., Goyal, V., and Sun, X.~A. (2011a).
\newblock A geometric characterization of the power of finite adaptability in
  multistage stochastic and adaptive optimization.
\newblock {\em Mathematics of Operations Research}, 36(1):24--54.

\bibitem[Bertsimas et~al., 2011b]{Bertsimas2011b}
Bertsimas, D., Iancu, D.~A., and Parrilo, P.~A. (2011b).
\newblock A hierarchy of near-optimal policies for multistage adaptive
  optimization.
\newblock {\em {IEEE} Transactions on Automatic Control}, 56(12):2809--2824.

\bibitem[Bertsimas and Kallus, 2020]{Bertsimas2020}
Bertsimas, D. and Kallus, N. (2020).
\newblock From predictive to prescriptive analytics.
\newblock {\em Management Science}, 66(3):1025--1044.

\bibitem[Bertsimas et~al., 2023]{Bertsimas2023}
Bertsimas, D., Shtern, S., and Sturt, B. (2023).
\newblock A data-driven approach to multistage stochastic linear optimization.
\newblock {\em Management Science}, 69(1):51--74.

\bibitem[Blanchet et~al., 2019]{Blanchet2019}
Blanchet, J., Kang, Y., and Murthy, K. (2019).
\newblock Robust {W}asserstein profile inference and applications to machine
  learning.
\newblock {\em Journal of Applied Probability}, 56(3):830--857.

\bibitem[Chen et~al., 2008]{Chen2008}
Chen, X., Sim, M., Sun, P., and Zhang, J. (2008).
\newblock A linear decision-based approximation approach to stochastic
  programming.
\newblock {\em Operations Research}, 56(2):344--357.

\bibitem[Chen and Zhang, 2009]{Chen2009}
Chen, X. and Zhang, Y. (2009).
\newblock Uncertain linear programs: Extended affinely adjustable robust
  counterparts.
\newblock {\em Operations Research}, 57(6):1469--1482.

\bibitem[Feige et~al., 2007]{Feigea2007}
Feige, U., Jain, K., Mahdian, M., and Mirrokni, V. (2007).
\newblock Robust combinatorial optimization with exponential scenarios.
\newblock In {\em Integer Programming and Combinatorial Optimization}, pages
  439--453. Springer.

\bibitem[François-Lavet et~al., 2018]{FrancoisLavet2018}
François-Lavet, V., Henderson, P., Islam, R., Bellemare, M.~G., and Pineau, J.
  (2018).
\newblock An introduction to deep reinforcement learning.
\newblock {\em Foundations and Trends® in Machine Learning},
  11(3–4):219--354.

\bibitem[Gao and Kleywegt, 2023]{Gao2023}
Gao, R. and Kleywegt, A. (2023).
\newblock Distributionally robust stochastic optimization with {W}asserstein
  distance.
\newblock {\em Mathematics of Operations Research}, 48(2):603--655.

\bibitem[Georghiou et~al., 2015]{Georghiou2015}
Georghiou, A., Wiesemann, W., and Kuhn, D. (2015).
\newblock Generalized decision rule approximations for stochastic programming
  via liftings.
\newblock {\em Mathematical Programming}, 152(1-2):301--338.

\bibitem[Goh and Sim, 2010]{GS10:dro_tractable_approximations}
Goh, J. and Sim, M. (2010).
\newblock Distributionally robust optimization and its tractable
  approximations.
\newblock {\em Operations Research}, 58(4):902--917.

\bibitem[Graves, 1999]{Graves1999}
Graves, S.~C. (1999).
\newblock A single-item inventory model for a nonstationary demand process.
\newblock {\em Manufacturing \& Service Operations Management}, 1(1):50--61.

\bibitem[Guslitser, 2002]{G02:immunized_solutions}
Guslitser, E. (2002).
\newblock Uncertainty-immunized solutions in linear programming.
\newblock Master's thesis, Technion.

\bibitem[Han and Nohadani, 2025]{Han2025}
Han, E. and Nohadani, O. (2025).
\newblock Nonlinear decision rules made scalable by nonparametric liftings.
\newblock {\em Management Science}, 71(4):3449--3471.

\bibitem[Karp, 1972]{Karp1972}
Karp, R.~M. (1972).
\newblock Reducibility among combinatorial problems.
\newblock In Miller, R.~E., Thatcher, J.~W., and Bohlinger, J.~D., editors,
  {\em Complexity of Computer Computations}, pages 85--103. Springer US.

\bibitem[Kuhn et~al., 2011]{Kuhn2011}
Kuhn, D., Wiesemann, W., and Georghiou, A. (2011).
\newblock Primal and dual linear decision rules in stochastic and robust
  optimization.
\newblock {\em Mathematical Programming}, 130(1):177--209.

\bibitem[Laguna, 1998]{Laguna1998}
Laguna, M. (1998).
\newblock Applying robust optimization to capacity expansion of one location in
  telecommunications with demand uncertainty.
\newblock {\em Management Science}, 44(11-part-2):101--110.

\bibitem[Mohajerin~Esfahani and Kuhn, 2018]{Esfahani2018}
Mohajerin~Esfahani, P. and Kuhn, D. (2018).
\newblock Data-driven distributionally robust optimization using the
  {W}asserstein metric: performance guarantees and tractable reformulations.
\newblock {\em Mathematical Programming}, 171(1-2):115--166.

\bibitem[Muth, 1960]{Muth1960}
Muth, J.~F. (1960).
\newblock Optimal properties of exponentially weighted forecasts.
\newblock {\em Journal of the American Statistical Association},
  55(290):299--306.

\bibitem[Powell, 2011]{Powell2011}
Powell, W.~B. (2011).
\newblock {\em Approximate Dynamic Programming: Solving the Curses of
  Dimensionality}.
\newblock Wiley.

\bibitem[Puterman, 1994]{Puterman1994}
Puterman, M.~L. (1994).
\newblock {\em Markov Decision Processes: Discrete Stochastic Dynamic
  Programming}.
\newblock Wiley.

\bibitem[Rahal et~al., 2021]{Rahal2021}
Rahal, S., Papageorgiou, D.~J., and Li, Z. (2021).
\newblock Hybrid strategies using linear and piecewise-linear decision rules
  for multistage adaptive linear optimization.
\newblock {\em European Journal of Operational Research}, 290(3):1014--1030.

\bibitem[See and Sim, 2010]{See2010}
See, C.-T. and Sim, M. (2010).
\newblock Robust approximation to multiperiod inventory management.
\newblock {\em Operations Research}, 58(3):583--594.

\bibitem[Thomä et~al., 2024]{Thomae2024}
Thomä, S., Walther, G., and Schiffer, M. (2024).
\newblock Designing tractable piecewise affine policies for multi-stage
  adjustable robust optimization.
\newblock {\em Mathematical Programming}, 208(1–2):661--716.

\bibitem[Vayanos et~al., 2011]{Vayanos2011}
Vayanos, P., Kuhn, D., and Rustem, B. (2011).
\newblock Decision rules for information discovery in multi-stage stochastic
  programming.
\newblock In {\em {IEEE} Conference on Decision and Control and European
  Control Conference}, pages 7368--7373, Orlando, FL. {IEEE}.

\bibitem[Wang and Qi, 2020]{Wang2020}
Wang, Z. and Qi, M. (2020).
\newblock Robust service network design under demand uncertainty.
\newblock {\em Transportation Science}, 54(3):676--689.

\bibitem[Zhen et~al., 2025]{Zhen2025}
Zhen, J., Kuhn, D., and Wiesemann, W. (2025).
\newblock A unified theory of robust and distributionally robust optimization
  via the primal-worst-equals-dual-best principle.
\newblock {\em Operations Research}, 73(2):862--878.

\end{thebibliography}
}
\clearpage

\section*{E-Companion}

\appendix

\renewcommand{\thesection}{EC.\arabic{section}}

\section{Proof of Lemma \ref{lem:rectangular_hull}}
\label{appendix:proof:lem:rectangular_hull}
Lemma~\ref{lem:rectangular_hull} generalizes Lemma 4.3 of \citet{Georghiou2015}, which covers the special case where $\nums=1$ and thus $[\underline{\uevec}_1, \overline{\uevec}_1]=\EBOX$. 
The key difference in our setting is that $\EBOXg\neq\EBOX$ in general. 
In this case, the outer approximation of \citet{Georghiou2015}, which utilizes $\FBOX$ instead of $\FBOXg$, contains realizations $\ufvec$ that are not contained in the convex hull $\FBOXg$. In particular, components $\ufscal_{ij}$ corresponding to intervals $[ z_{i,j-1}, z_{ij} ]$ that are (partially) outside of the box $\EBOXg$, that is, $z_{i,j-1}<\underline{\uescal}_{gi}$ or $z_{ij}>\overline{\uescal}_{gi}$, may be smaller or larger than the respective components of any realization in the convex hull, respectively, that is, $\ufscal_{ij} < \altaltufscal_{ij}$ (when $z_{i,j-1}<\underline{\uescal}_{gi}$) or $\ufscal_{ij} > \altaltufscal_{ij}$ (when $z_{ij}>\overline{\uescal}_{gi}$) for all $\altaltufvec\in\FBOXg$.
Lemma~\ref{lem:rectangular_hull} addresses this issue by tightening the formulation for these components.

\begin{proof}[Proof of Lemma~\ref{lem:rectangular_hull}]
We prove the lemma by first showing that $\FBOXg$ decomposes by components $i\in[\numue]$ and then dealing with each component separately. 
By rectangularity of $\EBOXg$, we find 
$$
\FBOXg 
= 
\convex(\fold (\bigtimes_{i\in[\numue]} [\underline{\uescal}_{gi}, \overline{\uescal}_{gi}])) 
\overset{\textnormal{(a)}}{=} 
\convex(\bigtimes_{i\in[\numue]}\fold_i([\underline{\uescal}_{gi}, \overline{\uescal}_{gi}]))
\overset{\textnormal{(b)}}{=} 
\bigtimes_{i\in[\numue]}\convex(\fold_i([\underline{\uescal}_{gi}, \overline{\uescal}_{gi}])),
$$
where (a) follows from $\fold$ decomposing by components $\uescal_i$, and 
(b) follows from standard convexity properties of Cartesian products. 

For the remainder of the proof, we fix $i\in[\numue]$ and consider $\convex(\fold_i([\underline{\uescal}_{gi}, \overline{\uescal}_{gi}]))$. 
Our arguments are based on the intuition that $\fold_i$ maps to edges of the hyperrectangle spanned by the breakpoints visualized in Figure~\ref{fig:visualization:fold}~(b). 
Let $\hat{Z} = \{z_{ij} \,\colon\, \underline{\uescal}_{gi} \leq z_{ij} \leq \overline{\uescal}_{gi},\, j\in[\numuef_i]\} \cup \{\underline{\uescal}_{gi}, \overline{\uescal}_{gi}\}$ be the union of all breakpoints contained in $[\underline{\uescal}_{gi}, \overline{\uescal}_{gi}]$ and the bounds of the interval. 
Geometrically, $\fold_i(\hat{Z})$ coincides with the extreme points of $\fold_i([\underline{\uescal}_{gi}, \overline{\uescal}_{gi}])$, and thus $\convex(\fold_i([\underline{\uescal}_{gi}, \overline{\uescal}_{gi}])) = \convex(\fold_i(\hat{Z}))$. 
Let $H\colon [0,1]^{\numuef_i}\to \mathbb{R}^{\numuef_i}$ be the affine transformation defined component-wise via 
$
H_j(\zeta_j) = \fold_{ij}(\underline{\uescal}_i) + \zeta_j (\fold_{ij}(\overline{\uescal}_i) - \fold_{ij}(\underline{\uescal}_i))
$,
and let $\unitvec^j = \sum_{j'\in[j]}\unitvec_{j'}$ be the vector with ones in the first $j$ components and zero otherwise. 
One readily verifies that
$$
H(\unitvec^j) = \begin{cases}
    \fold_i(\overline{\uescal}_{gi}) & \case z_{ij} \geq \overline{\uescal}_{gi} \\
    \fold_i(z_{ij}) & \case \underline{\uescal}_{gi} \leq z_{ij} \leq \overline{\uescal}_{gi} \\
    \fold_i(\underline{\uescal}_{gi}) & \case z_{ij} \leq \underline{\uescal}_{gi}.
\end{cases}
$$
Thus, $\fold_i(\hat{Z}) = H(\{\unitvec^j\,\colon\, j\in [\numuef_i]\})$ and consequently
$$
\convex(\fold_i(\hat{Z}))
=
\convex(H(\{\unitvec^j\,\colon\, j\in [\numuef_i]\})) 
=
H(\convex(\{\unitvec^j\,\colon\, j\in [\numuef_i]\})),
$$
where the second identity follows from $H$ being affine. 
Note that
\begin{equation*}
\convex(\{\unitvec^j\,\colon\, j\in [\numuef_i]\}) = \{\bm{\zeta}\in[0,1]^{\numuef_i}\,\colon\,\zeta_j\geq\zeta_{j+1}\,\forall j\in [\numuef_i-1]\},
\end{equation*}
where the ``$\subseteq$" direction is immediate and the ``$\supseteq$" direction follows from $\bm{\zeta} = \sum_{j\in [\numuef_i]_{0}} (\zeta_j - \zeta_{j+1}) \unitvec^j$, where we use $\zeta_0=1$ and $\zeta_{\numuef_i+1} = 0$ for notational convenience.
Substituting $\ufvec = H(\bm{\zeta})$ and noting that $H^{-1}_j(\ufscal_j) = \frac{\ufscal_j - \fold_{ij}(\underline{\uescal}_{gi})}{\fold_{ij}(\overline{\uescal}_{gi}) - \fold_{ij}(\underline{\uescal}_{gi})}$ yields
\begin{align*}
&H(\convex(\{\unitvec^j\,\colon\, j\in [\numuef_i]\})) = 
\\&\quad\quad
\left\{
\begin{aligned}
\ufvec\in\mathbb{R}^{\numuef_i} \colon 
& \fold_{ij}(\underline{\uescal}_{gi}) \leq \ufscal_j \leq \fold_{ij}(\overline{\uescal}_{gi}) && \forall j \in [\numuef_i]\\
& \frac{\ufscal_j - \fold_{ij}(\underline{\uescal}_{gi})}{\fold_{ij}(\overline{\uescal}_{gi}) - \fold_{ij}(\underline{\uescal}_{gi})} \geq \frac{\ufscal_{j+1} - \fold_{i,j+1}(\underline{\uescal}_{gi})}{\fold_{i,j+1}(\overline{\uescal}_{gi}) - \fold_{i,j+1}(\underline{\uescal}_{gi})} && \forall j\in[\numuef_i-1]
\end{aligned}
\right\},
\end{align*}
which is the $i$\textsuperscript{th} component of the right-hand-side in the statement of the lemma.
\end{proof}

\section{Proof of Theorem \ref{thm:negative_SP}}
\label{appendix:proof:thm:negative_SP}
We prove a slightly more general version of Theorem \ref{thm:negative_SP}, where the constraint matrices $\amat_g$ may depend on the uncertainty realizations. 
In particular, we study the problem
\begin{equation}
\tag{$\baseproblem_{\mathrm{A}} (\PU, \U)$}
\label{eq:P:general}
\begin{aligned}
    \minimize_{\xvec\in\X} \quad& \mathbb{E}_\PU \left[ \cvec(\suvec)^\intercal\xvec(\suvec)\right]\\
    \st \quad& \amat_g(\uvec) \xvec(\uvec) \leq \bvec_g(\uvec) && \forall g\in[\nums], \uvec\in \U_g
\end{aligned}  
\end{equation}
and its lifted version
\begin{equation}
\tag{$\lbaseproblem_{\mathrm{A}} (\PUF, \UF)$}
\begin{aligned}
    \minimize_{\xfvec\in\XF} \quad& \mathbb{E}_\PUF \left[ \cvec(\retr(\sufvec))^\intercal\xfvec(\sufvec)\right]\\
    \st \quad& \amat_g(\retr(\ufvec)) \xfvec(\ufvec) \leq \bvec_g(\retr(\ufvec)) && \forall g\in[\nums], \ufvec\in \UF_g.
\end{aligned}  
\end{equation}
These problems are significantly more general than the problems studied in the main part of the paper and, among others, also include robust optimization with uncertain objectives $\cvec(\uvec)$ via the epigraph reformulation from Example~\ref{ex:RP}. 
In contrast to $\baseproblem(\PU, \U)$ studied in the main paper, affine relaxations of $\baseproblem_{\mathrm{A}}(\PU, \U)$ no longer yield tractable counterparts in general \citep[Example~3.1]{BenTal2004}. 
Thus, finding optimal affine policies for these problems is generally intractable. 
From a theoretical perspective, it is nonetheless interesting to see that the limitations of lifting policies using outer approximations $\UFOG$ extend to this more general setting. 
{
\renewcommand{\thetheorem}{\ref{thm:negative_SP}$'$}
\begin{theorem}
\label{thm:negative_SP:strong}

For $\nums = 1$, there exists a subset $\overline{\Psi}{}'\subseteq\UFOG$ with $\retr(\overline{\Psi}{}') = \U$ such that every feasible solution $\xfvec$ in $\aff\lbaseproblem_{\mathrm{A}}(\PUF, \UFOG)$ has a corresponding feasible solution $\xvec$ in $\aff\baseproblem_{\mathrm{A}}(\PU, \U)$ with $\xfvec(\ufvec) = \xvec(\retr(\ufvec))$ for all $\ufvec\in\overline{\Psi}{}'$.
\end{theorem}
}

\begin{remark}
Theorem~\ref{thm:negative_SP:strong} still holds when we replace the requirement $G=1$ with the weaker requirement $\EBOXg = \EBOX$ for all $g\in [G]$.
As we are not aware of any interesting problems with $G>1$ that meet this weaker requirement, we focus on the relevant cases where $G=1$.
\end{remark}

\begin{proof}[Proof of Theorem~\ref{thm:negative_SP:strong}]
Consider the affine approximation
$$
\overline{\fold}_{ij}(\uescal_i)
\; = \;
(z_{ij} - z_{i,j-1}) \cdot \frac{\uescal_i - \underline{\uescal}_i}{\overline{\uescal}_i-\underline{\uescal}_i} \qquad \forall i \in [\numue], j \in [\numuef_i]
$$
of the folding operator $\fold$ and the associated affine approximation $\overline{\lift} = \overline{\fold}\circ\embed$ of the lifting operator $\lift$. We prove the statement of the theorem in three steps: (i) we show that $\overline{\Psi}{'} = \overline{L}(\U)$ satisfies $\overline{\Psi}{'}\subseteq \UFOG$; (ii) we verify that $\retr(\overline{\Psi}{'})=\U$; and (iii) we show that for every feasible solution $\xfvec$ in $\aff\lbaseproblem_{\mathrm{A}}(\PUF, \UFOG)$, the corresponding solution $\xvec=\xfvec\circ\overline{\lift}$ is feasible in $\aff\baseproblem_{\mathrm{A}}(\PU, \U)$ and satisfies $\xfvec(\ufvec) = \xvec(\retr(\ufvec))$ for all $\ufvec\in\overline{\Psi}{}'$. Since $G = 1$, we drop the subscript $g$ throughout the proof.

In view of Step~(i), we show that $\overline{\Psi}{'}\subseteq \UFOG$ by verifying that $\overline{\Psi}{'}$ is contained in each of the two sets on the right-hand side of~\eqref{eq:outer_general}. We have $\overline{\Psi}{'} \subseteq \{ \ufvec \, \colon \,
\fold^+ (\ufvec) \in\UE \}$ whenever $\fold^+(\overline{\Psi}{'}) \subseteq \UE$, that is, whenever $\fold^+ \circ \overline{\fold} \circ \embed \U \subseteq \UE$. To this end, we note that $\fold^+$ is a left-inverse of $\overline{\fold}$ since
\begin{equation*}
    \flatten_i(\overline{\fold}(\uevec)) 
    \; \overset{\text{(a)}}{=} \;
    \underline{\uescal}_i + \sum_{j\in[\numuef_i]} (z_{ij} - z_{i,j-1}) \cdot \frac{\uescal_i - \underline{\uescal}_i}{\overline{\uescal}_i-\underline{\uescal}_i}
    \; \overset{\text{(b)}}{=} \;
    \underline{\uescal}_i + (\overline{\uescal}_i-\underline{\uescal}_i) \cdot \frac{\uescal_i - \underline{\uescal}_i}{\overline{\uescal}_i-\underline{\uescal}_i}
    \; = \;
    \uescal_i
    \qquad \forall i\in[\numue], \uevec \in \UE,
\end{equation*}
where (a) uses the definitions of $\flatten$ and $\overline{\fold}$ and (b) contracts the sum. The set $\overline{\Psi}{'}$ is therefore contained in the first set on the right-hand side of~\eqref{eq:outer_general} whenever $\embed \U \subseteq \UE$, which holds by construction. To see that $\overline{\Psi}{'} = \overline{\fold}(\UE)$ is also contained in the second set on the right-hand side of~\eqref{eq:outer_general}, we show that any $\overline{\fold}(\uevec)$, $\uevec\in\UE$, satisfies the conditions in the representation of $\FBOX$ offered by Lemma~\ref{lem:rectangular_hull}. To see that $\fold_{ij}(\underline{\uescal}_i) \leq \overline{\fold}_{ij}(\uescal_i) \leq \fold_{ij}(\overline{\uescal}_i)$, we observe that
\begin{equation*}
    \fold_{ij}(\underline{\uescal}_i)
    \; \overset{\text{(a)}}{=} \;
    0 
    \; \overset{\text{(b)}}{\leq} \;
    (z_{ij} - z_{i,j-1}) \cdot \frac{\uescal_i - \underline{\uescal}_i}{\overline{\uescal}_i-\underline{\uescal}_i}
    \; \overset{\text{(c)}}{\leq} \;
    (z_{ij} - z_{i,j-1})
    \; \overset{\text{(d)}}{=} \;
    \fold_{ij}(\overline{\uescal}_i)
    \qquad \forall \uevec \in \UE, i\in[\numue], j\in[\numuef_i],
\end{equation*}
where (a) and (d) follow from the definition of $\fold_{ij}$ on the bounds $\underline{\uescal}_i$ and $\overline{\uescal}_i$ respectively; 
(b) and (c) follow from $z_{ij}\geq z_{i,j-1}$ and $\underline{\uescal}_i\leq \uescal_i \leq \overline{\uescal}_i$. To see that the second condition in the representation of $\FBOX$ is satisfied as well, we note that
\begin{equation*}
\begin{aligned}
    \frac{\overline{\fold}_{ij}(\uescal_i) - \fold_{ij}(\underline{\uescal}_i)}{\fold_{ij}(\overline{\uescal}_i) - \fold_{ij}(\underline{\uescal}_i)} 
    \overset{\text{(a)}}{=} 
    \frac{\overline{\fold}_{ij}(\uescal_i)}{z_{ij}-z_{i,j-1}}
    \; \overset{\text{(b)}}{=} \;
    \frac{z_{ij} - z_{i,j-1}}{z_{ij}-z_{i,j-1}} \cdot
    \frac{\uescal_i - \underline{\uescal}_i}{\overline{\uescal}_i-\underline{\uescal}_i}
    \; \overset{\text{(c)}}{=} \;
    \frac{\uescal_i - \underline{\uescal}_i}{\overline{\uescal}_i-\underline{\uescal}_i}
    \overset{\text{(d)}}{=}
    \frac{\overline{\fold}_{i,j+1}(\uescal_i) - \fold_{i,j+1}(\underline{\uescal}_i)}{\fold_{i,j+1}(\overline{\uescal}_i) - \fold_{i,j+1}(\underline{\uescal}_i)},
\end{aligned}
\end{equation*}
where (a) is due to the definition of $\fold_{ij}$ and the bounds $\underline{\uescal}_i$ and $\overline{\uescal}_i$,
(b) follows from the definition of $\overline{\fold}$, 
(c) simplifies the term,
and (d) follows by replacing $j$ with $j+1$ in our chain of equations (a)--(c). This concludes Step~(i).

As for Step~(ii), we note that $\retr = \embed^+ \circ \fold^+$ and that Step~(i) has shown that $\fold^+$ is a left-inverse of $\overline{\fold}$. Thus, $\retr$ is a left-inverse of $\overline{\lift}$, which implies that $\retr(\overline{\Psi}{'})=\retr(\overline{\lift}(\U))=\U$. 

In view of Step~(iii), finally, fix any $\xfvec$ feasible in $\aff\lbaseproblem_{\mathrm{A}}(\PUF, \UFOG)$ and set $\xvec=\xfvec\circ\overline{\lift}$. 
Let $\ufvec\in\overline{\Psi}{'}$. 
By definition of $\overline{\Psi}{}'=\overline{\lift}(\U)$, there exists some $\uvec\in \U$ such that $\ufvec = \overline{\lift}(\uvec)$. With this we find
$$
\xvec(\retr(\ufvec)) 
\overset{\text{(a)}}{=} \xfvec(\overline{\lift}(\retr(\ufvec))) 
\overset{\text{(b)}}{=} \xfvec(\overline{\lift}(\retr(\overline{\lift}(\uvec)))) 
\overset{\text{(c)}}{=} \xfvec(\overline{\lift}(\uvec)) 
\overset{\text{(d)}}{=} \xfvec(\ufvec),
$$
where (a) is due to the definition of $\xvec$, (b) and (d) follow from $\ufvec=\overline{\lift}(\uvec)$, 
and (c) holds since $\retr$ is a left-inverse of $\overline{\lift}$.
As both $\xfvec$ and $\overline{\lift}$ are affine, so is $\xvec$. 
Further, $\xvec$ is feasible in $\aff\baseproblem_{\mathrm{A}}(\PU, \U)$ since
$$
\amat(\uvec) \xvec(\uvec) 
\overset{\text{(a)}}{=} \amat(\retr(\overline{\lift}(\uvec)) \xfvec (\overline{\lift}(\uvec)) 
\overset{\text{(b)}}{\leq} \bvec(\retr(\overline{\lift}(\uvec))) 
\overset{\text{(c)}}{=} \bvec(\uvec) \qquad \forall \uvec \in \U, 
$$
where (a) is due to the definition of $\xvec$ and $\retr$ being a left-inverse of $\overline{\lift}$, 
(b) holds since $\xfvec$ is feasible in $\aff\lbaseproblem_{\mathrm{A}}(\PUF, \UFOG)$, and 
(c) follows from $\retr$ being a left-inverse of $\overline{\lift}$.
\end{proof}

\section{Proof of Observation~\ref{obs:distance_formalization}}
\label{appendix:proof:obs:distance_formalization}
\begin{proof}[Proof of Observation~\ref{obs:distance_formalization}]
We prove the inequality~\eqref{eq:the-mother-of-all-distances} by showing that for all $\ufvec\in\UFOG$, we have
\begin{align*}
\dist(\fold^+(\ufvec), [\zvec^-, \zvec^+]) 
&= \min_{\uevec'\in[\zvec^-, \zvec^+]} \norm{\fold^+(\ufvec) - \uevec'}_1 = \sum_{i\in[\numue]} \min_{\uescal_i'\in[z_i^-, z_i^+]} \lvert \fold_i^+(\ufvec) - \uescal_i'\rvert \\
&= \sum_{i\in[\numue]} (\fold_i^+(\ufvec) - z_i^+)_+ + (z_i^- - \fold_i^+(\ufvec))_+ \\
&\overset{\text{(i)}}{=} 
\sum_{i\in[\numue]}\left(\sum_{j\in \numuef^+_i}\fold_{ij}(\fold_i^+(\ufvec)) + z^-_i - \underline{\uescal}_i-\sum_{j\in \numuef^-_i}\fold_{ij}(\fold_i^+(\ufvec))\right)\\
&\overset{\text{(ii)}}{\leq} 
\sum_{i\in[\numue]}\left(\sum_{j\in \numuef^+_i}\ufscal_{ij} + z^-_i - \underline{\uescal}_i-\sum_{j\in \numuef^-_i}\ufscal_{ij}\right).
\end{align*}
Here, the first equality holds by construction of $d$, and the subsequent two equalities are immediate. In the remainder of the proof, we show that (i) and (ii) hold. 
Since $\fold_{ij}(\fold_i^+(\ufvec)) = \ufscal_{ij}$ for all $\ufvec\in\UF$, our above argument also implies that~\eqref{eq:the-mother-of-all-distances} is tight for all $\ufvec\in\UF$.

As for (i), recall the following property of $\fold$,
\begin{equation}
\label{eq:proof_distance_formalization:helper-a}
    \underline{\uescal}_i + \sum_{j' \in [j]} \fold_{ij'}(\uescal_i) = \min \{ z_{ij}, \uescal_i \} \quad\quad \forall i\in[\numue], j\in[\numuef_i], \uevec\in\UE,
\end{equation}
and note that $\fold^+(\ufvec)\in \UE$ for any $\ufvec\in\UFOG$. 
With this we find
\begin{align*}
    (\fold_i^+(\ufvec) - z_i^+)_+
    &= \fold_i^+(\ufvec) - \min \{ \fold_i^+(\ufvec), z_i^+ \}
    \overset{\text{(a)}}{=} \min\{\fold_i^+(\ufvec), \overline{\uescal}_i\} - \min \{ \fold_i^+(\ufvec), z_i^+ \}
    \\&\overset{\text{(b)}}{=} \sum_{j\in [\numuef_i]}\fold_{ij}(\fold_i^+(\ufvec)) + \underline{\uescal}_i
    - \sum_{j\in[\numuef_i], z_{ij}\leq z^+_i}\fold_{ij}(\fold_i^+(\ufvec)) - \underline{\uescal}_i
    \overset{\text{(c)}}{=} \sum_{j\in \numuef^+_i}\fold_{ij}(\fold_i^+(\ufvec)),
\end{align*}
where (a) holds since $\fold^+(\ufvec)\in \UE \subseteq \EBOX$, (b) follows from $\overline{\uescal}_i = z_{i \numuef_i}$ and applying \eqref{eq:proof_distance_formalization:helper-a} to each of the minima, and (c) is due to $\{j\in[\numuef_i]\,\colon\, z_{ij}\leq z^+_i\} = [\numuef_i]\setminus\numuef^+_i$. 
We further find 
\begin{equation*}
    (z_i^- - \fold_i^+(\ufvec))_+
    = z_i^- - \min \{ z_i^-, \fold_i^+(\ufvec) \}
    \overset{\text{(a)}}{=} z_i^- - \sum_{j\in \numuef^-_i}\fold_{ij}(\fold_i^+(\ufvec)) - \underline{\uescal}_i,
\end{equation*}
where (a) holds due to \eqref{eq:proof_distance_formalization:helper-a}.

To prove (ii), we will show that 
\begin{equation}\label{eq:part-b}
    \sum_{j \in \numuef^-_i} \ufscal_{ij} \leq
    \sum_{j \in \numuef^-_i} \fold_{ij}(\fold_i^+(\ufvec))
    \quad \text{and} \quad
    \sum_{j \in \numuef^+_i} \ufscal_{ij} \geq
    \sum_{j \in \numuef^+_i} \fold_{ij}(\fold_i^+(\ufvec))
    \qquad \forall i\in[\numue], \ufvec\in\UFOG.
\end{equation}
In view of the first inequality in~\eqref{eq:part-b}, we show the more general property
\begin{equation}
\label{eq:proof_distance_formalization:helper-b}
    \sum_{j' \in [j]} \ufscal_{ij'} \overset{\text{(a)}}{\leq} \min\{ \fold^+_i(\ufvec), z_{ij} \} - \underline{\uescal}_i \overset{\text{(b)}}{=}
    \sum_{j' \in [j]} \fold_{ij'}(\fold_i^+(\ufvec)) \qquad \forall i\in[\numue], j\in[\numuef_i], \ufvec\in\UFOG,
\end{equation}
where the identity (b) follows from \eqref{eq:proof_distance_formalization:helper-a}. 
For (a), we observe that
\begin{equation*}
    \fold^+_i(\ufvec) - \underline{\uescal}_i \overset{\text{(c)}}{=} \sum_{j \in [\numuef_i]} \ufscal_{ij} \overset{\text{(d)}}{\geq} \sum_{j' \in [j]} \ufscal_{ij'} 
    \quad \text{and} \quad
    z_{ij} - \underline{\uescal}_i
    \overset{\text{(e)}}{=} \sum_{j' \in [j]} z_{ij'}-z_{i,j'-1}
    \overset{\text{(f)}}{\geq}
    \sum_{j' \in [j]} \ufscal_{ij'},
\end{equation*}
where (c) uses the definition of $\fold^+$,
(d) holds since $\ufscal_{ij}\geq \fold_{ij}(\underline{\uescal}_i) = 0$,
(e) follows from contracting the sum, and
(f) holds since $\ufscal_{ij'} \leq \fold_{ij'}(\overline{\uescal}_i) = z_{ij'}-z_{i,j'-1}$.
The second inequality in~\eqref{eq:part-b}, finally, holds since
\begin{equation*}
    \begin{aligned}
        \sum_{j \in \numuef^+_i} \ufscal_{ij} &= 
        \sum_{j \in [\numuef_i]} \ufscal_{ij} - \sum_{j \in [\numuef_i]\setminus \numuef^+_i} \ufscal_{ij} \overset{\text{(a)}}{=} 
        \sum_{j \in [\numuef_i]} \fold_{ij}(\fold_i^+(\ufvec))
        - \sum_{j \in [\numuef_i]\setminus \numuef^+_i} \ufscal_{ij} \\&\overset{\text{(b)}}{\geq} 
        \sum_{j \in [\numuef_i]} \fold_{ij}(\fold_i^+(\ufvec)) - \sum_{j \in [\numuef_i]\setminus \numuef^+_i} \fold_{ij}(\fold_i^+(\ufvec)) = 
        \sum_{j \in \numuef^+_i} \fold_{ij}(\fold_i^+(\ufvec)),
    \end{aligned}
\end{equation*}
where (a) follows from the definition of $\fold$ and $\fold^+$ and (b) follows from \eqref{eq:proof_distance_formalization:helper-b}.
\end{proof}

\section{Proof of Observation~\ref{obs:conic-reformulation}}
\label{appendix:proof:obs:conic-reformulation}
\begin{proof}[Proof of Observation~\ref{obs:conic-reformulation}]
The proofs of \ref{obs:conic-reformulation-UFOG} and \ref{obs:conic-reformulation-UFO} largely follow the same arguments. 
In the following, we explicitly show \ref{obs:conic-reformulation-UFO} and point out which parts need to be adjusted for the proof of \ref{obs:conic-reformulation-UFOG}. 
Our proof employs the primal-worst-dual-best strong duality argument of \citet{Zhen2025}, which extends the duality argument of \citet{Beck2009} to problem classes that encompass $\aff \lproblem(\PUF, \UFO)$. 
Under this scheme, the dual of $\aff \lproblem(\PUF, \UFO)$ is

\begin{equation}
\label{eq:duality:proof:rawdual}
\begin{aligned}
	\maximize_{\ymat, \yvec, \zmat, \zvec, \ufvec, \bm{s}} \quad&
	-\sup_{\xmat, \xvec} \left(\langle\xmat,\ymat\rangle + \langle\xvec,\yvec\rangle 
	- \mathbb{E}_\PUF \left[\cvec(\retr(\sufvec))^\intercal(\xmat\sufvec + \xvec)\right]\right) \\&
    \begin{aligned}
	-\sum_{g\in[\nums], k\in[\numc_g]}\sup_{\xmat, \xvec} \Big(
	    &\langle\xmat,\zmat_{gk}\rangle + \langle\xvec,\zvec_{gk}\rangle 
	\\[-2ex]&
    - s_{gk} \left(\amat_{gk}(\xmat\ufvec_{gk}+\xvec) - b_{gk}(\retr(\ufvec_{gk}))\right)
	\Big)\end{aligned}\span\span\\
	\st\quad
	&\ymat+\sum_{g\in[\nums], k\in[\numc_g]} \zmat_{gk} = \nullvec\\
	&\yvec+\sum_{g\in[\nums], k\in[\numc_g]} \zvec_{gk} = \nullvec\\
	&\ufvec_{gk}\in\UFO_g, \quad s_{gk}\geq 0 &&\forall g\in[\nums], k\in[\numc_g],
\end{aligned}
\end{equation}
where we use the affine representation $\xfvec(\ufvec)=\xmat\ufvec + \xvec$.

By assumption, each set $\UFO_g$ is bounded, and $\aff \lproblem(\PUF, \UFO)$ affords a Slater point. 
Thus, strong duality holds by Theorem 4 (ii) of \citet{Zhen2025}. 
Together with the assumed feasibility of $\aff \lproblem(\PUF, \UFO)$, this, in particular, implies that the dual has a finite optimal value. 
The suprema in the objective function only yield finite values, when $\ymat$, $\yvec$, $\zmat$, and $\zvec$ omit certain conditions. 
In particular, for the first term of the objective function in \eqref{eq:duality:proof:rawdual}, we find
\begin{align*}
	&\sup_{\xmat, \xvec} \left(\langle\xmat,\ymat\rangle + \langle\xvec,\yvec\rangle 
	- \mathbb{E}_\PUF \left[\cvec(\retr(\sufvec))^\intercal(\xmat\sufvec + \xvec)\right]\right)
	\\\overset{\text{(a)}}{=}&
	\sup_{\xmat, \xvec} \left(\langle\xmat,\ymat-\mathbb{E}_{\PUF}[c(\retr(\sufvec))\sufvec^\intercal]\rangle + \langle\xvec,\yvec-\mathbb{E}_{\PUF}[\cvec(\retr(\sufvec))]\rangle \right)
	\\\overset{\text{(b)}}{=}&
	\begin{cases}
		0 & \case \ymat=\mathbb{E}_{\PUF}[c(\retr(\sufvec))\sufvec^\intercal] \text{ and } \yvec=\mathbb{E}_{\PUF}[\cvec(\retr(\sufvec))] \\
		\infty & \otherwise,
	\end{cases}
\end{align*}
where (a) follows from the linearity of the expectation operator, and
(b) holds since $\xmat$ and $\xvec$ are free variables.
For the second term of the objective function in \eqref{eq:duality:proof:rawdual}, we find
\begin{align*}
	&\sup_{\xmat, \xvec} \left(\langle\xmat,\zmat_{gk}\rangle + \langle\xvec,\zvec_{gk}\rangle 
	- s_{gk} \left(\amat_{gk}(\xmat\ufvec_{gk}+\xvec) - b_{gk}(\retr(\ufvec_{gk}))\right)
	\right)
	\\\overset{\text{(a)}}{=}&
	\sup_{\xmat, \xvec} \left(\langle\xmat,\zmat_{gk}-s_{gk} \amat^\intercal_{gk} \ufvec^\intercal_{gk} \rangle + \langle\xvec,\zvec_{gk}-s_{gk}\amat^\intercal_{gk}\rangle 
	+ s_{gk} b_{gk}(\retr(\ufvec_{gk}))
	\right)
	\\\overset{\text{(b)}}{=}&
	\begin{cases}
		s_{gk} b_{gk}(\retr(\ufvec_{gk})) &\case \zmat_{gk}=s_{gk} \amat^\intercal_{gk} \ufvec^\intercal_{gk} \text{ and } \zvec_{gk}=s_{gk}\amat^\intercal_{gk}\\
		\infty &\otherwise,
	\end{cases}
\end{align*}
where (a) reorders terms and 
(b) holds since $\xmat$ and $\xvec$ are free variables. 
Using this, we substitute $\ymat$, $\yvec$, $\zmat$, and $\zvec$ in~\eqref{eq:duality:proof:rawdual} to arrive at the equivalent problem 
\begin{equation}
\label{eq:duality:proof:refineddual}
\begin{aligned}
	\maximize_{\ufvec, \bm{s}} \quad 
	& - \sum_{g\in[\nums],k\in[\numc_g]} s_{gk}b_{gk}( \retr( \ufvec_{gk})) \\
	\st \quad & \sum_{g\in[\nums],k\in[\numc_g]} s_{gk}\amat_{gk}^\intercal  \ufvec^\intercal_{gk} = - \mathbb{E}_{\PUF}[\cvec(\retr(\sufvec))\sufvec^\intercal]\\
	&\sum_{g\in[\nums],k\in[\numc_g]} s_{gk} \amat_{gk}^\intercal = -\mathbb{E}_{\PUF}[\cvec(\retr(\sufvec))] && \\
	& \ufvec_{gk}\in \UFO_g, \quad s_{gk}\geq 0 &&\forall g\in[\nums], k\in[\numc_g].
\end{aligned}
\end{equation}%
Problem~\eqref{eq:duality:proof:refineddual} contains bi-linear terms in $\ufvec$ and $\bm{s}$ and is thus non-convex in general. 
Next, we eliminate these bi-linear terms via another variable substitution. 
In particular, we reformulate the problem to have only bi-linearities of the form $s_{gk}\ufvec_{gk}$. 
For the term $s_{gk}b_{gk}(\retr(\ufvec_{gk}))$ in the objective of \eqref{eq:duality:proof:refineddual}, note that $b_{gk}\circ \retr$ is an affine function as both $b_{gk}$ and $\retr$ are affine. 
Thus, $b_{gk}(\retr(\cdot))-b_{gk}(\retr(\nullvec))$ is linear, which implies that $s_{gk}b_{gk}(\retr(\ufvec_{gk})) = b_{gk}(\retr(s_{gk}\ufvec_{gk}))+(s_{gk}-1)b_{gk}(\retr(\nullvec))$ for all $\ufvec_{gk}\in \UFO_g$ and $s_{gk}\geq 0$. 
For the term $s_{gk}\amat^\intercal_{gk}\ufvec^\intercal_{gk}$ in the first constraint of \eqref{eq:duality:proof:refineddual}, we find that $s_{gk}\amat^\intercal_{gk}\ufvec^\intercal_{gk} = \amat^\intercal_{gk}(s_{gk}\ufvec_{gk})^\intercal$ by the linearity of matrix multiplications and transpositions. 
Substituting $\altufvec_{gk} = s_{gk}\ufvec_{gk}$, where $\altufvec_{gk} \in \{s_{gk}\ufvec \,\colon\, \ufvec\in \UFO_g\} = s_{gk}\UFO_g$, we find that \eqref{eq:duality:proof:refineddual} is equivalent to 
\begin{equation*}
\begin{aligned}
	\maximize_{\altufvec, \bm{s}} \quad 
	& - \sum_{g\in[\nums],k\in[\numc_g]} b_{gk}(\retr(\altufvec_{gk}))+(s_{gk}-1)b_{gk}(\retr(\nullvec)) \\
	\st \quad & \sum_{g\in[\nums],k\in[\numc_g]} \amat_{gk}^\intercal  \altufvecT_{gk} = - \mathbb{E}_{\PUF}[\cvec(\retr(\sufvec))\sufvec^\intercal]\\
	&\sum_{g\in[\nums],k\in[\numc_g]} s_{gk} \amat_{gk}^\intercal = -\mathbb{E}_{\PUF}[\cvec(\retr(\sufvec))] && \\
	& \altufvec_{gk}\in s_{gk}\UFO_g, \quad s_{gk}\geq 0 &&\forall g\in[\nums], k\in[\numc_g].
\end{aligned}
\end{equation*}
We conclude the proof by showing that the condition $\altufvec_{gk}\in s_{gk}\UFO_g$ is conic representable for conic representable sets $\U_g = \{\uvec\in\mathbb{R}^\numu\;\colon\; \vmat_g\uvec\succeq_{\mathcal{K}_g}\dvec_g\}$. 
Indeed, using the definitions of $\UFO_g$ and $\U_g$, we find that
\begin{equation}
\label{eq:duality:proof:sUFOraw}
\begin{aligned}
	s\UFO_g = \Big\{
			s\ufvec\;\colon\;& \ufvec\in\mathbb{R}^\numuf_+,\; \uvec\in\mathbb{R}^\numu,
            \\& 
			\vmat_g\uvec  \succeq_{\mathcal{K}_g} \dvec_g 
			,\;
			\flatten(\ufvec) = \embed\uvec
			\\&\fold_{ij}(\underline{\uescal}_{gi}) \leq \ufscal_{ij} \leq \fold_{ij}(\overline{\uescal}_{gi}) \;&&
				\forall i\in[\numue],j\in[\numuef_i]
			\\&\frac{\ufscal_{ij} - \fold_{ij}(\underline{\uescal}_{gi})}
			{\fold_{ij}(\overline{\uescal}_{gi}) - \fold_{ij}(\underline{\uescal}_{gi})}
			\geq
			\frac{\ufscal_{i,j+1} - \fold_{i,j+1}(\underline{\uescal}_{gi})}
			{\fold_{i,j+1}(\overline{\uescal}_{gi}) - \fold_{i,j+1}(\underline{\uescal}_{gi})}
			\; &&\begin{aligned}
				\forall	& i\in[\numue],j\in[\numuef_i] \\[-1ex]
				&\mathrm{ with }\, \underline{\uescal}_{gi} < z_{ij} < \overline{\uescal}_{gi}
			\end{aligned}
			\\&
			\fdist(\ufvec, [\zvec^-, \zvec^+]) \leq \maxdist_g ([\zvec^-, \zvec^+]) && \forall [\zvec^-, \zvec^+]\in \mathcal{Z}
		\Big\}
\end{aligned}
\end{equation}
for any $s\geq 0$. 
Scaling all conditions in \eqref{eq:duality:proof:sUFOraw} by $s$ and using the linearity of matrix multiplications, \eqref{eq:duality:proof:sUFOraw} is equivalent to  
\begin{equation}
\label{eq:duality:proof:sUFOscaled}
\begin{aligned}
	\Big\{
			s\ufvec\;\colon\;& \ufvec\in\mathbb{R}^\numuf_+,\; \uvec\in\mathbb{R}^\numu,
            \\& 
			\vmat_g (s\uvec)  \succeq_{\mathcal{K}_g} s\dvec_g 
			,\;
			s\flatten(\ufvec) = \embed (s\uvec)
			\\& s\fold_{ij}(\underline{\uescal}_{gi}) \leq s\ufscal_{ij} \leq s\fold_{ij}(\overline{\uescal}_{gi}) \;&&
				\forall i\in[\numue],j\in[\numuef_i]
			\\&\frac{s\ufscal_{ij} - s\fold_{ij}(\underline{\uescal}_{gi})}
			{\fold_{ij}(\overline{\uescal}_{gi}) - \fold_{ij}(\underline{\uescal}_{gi})}
			\geq
			\frac{s\ufscal_{i,j+1} - s\fold_{i,j+1}(\underline{\uescal}_{gi})}
			{\fold_{i,j+1}(\overline{\uescal}_{gi}) - \fold_{i,j+1}(\underline{\uescal}_{gi})}
			\; &&\begin{aligned}
				\forall	& i\in[\numue],j\in[\numuef_i] \\[-1ex]
				&\mathrm{ with }\, \underline{\uescal}_{gi} < z_{ij} < \overline{\uescal}_{gi}
			\end{aligned}
			\\&
			s\fdist(\ufvec, [\zvec^-, \zvec^+]) \leq s\maxdist_g ([\zvec^-, \zvec^+]) && \forall [\zvec^-, \zvec^+]\in \mathcal{Z}
		\Big\}.
\end{aligned}
\end{equation}
Using that $\fold^+$ is affine, we find that $s\fold^+(\ufvec) = \fold^+(s \ufvec) + (s-1)\fold^+(\nullvec)$. 
Similarly, $\fdist(\cdot, [\zvec^-, \zvec^+])$ being affine implies that $s\fdist(\ufvec, [\zvec^-, \zvec^+]) = \fdist(s\ufvec, [\zvec^-, \zvec^+]) + (s-1)\fdist(\nullvec, [\zvec^-, \zvec^+])$. 
Thus, substituting $\altuvec = s\uvec$ and $\altufvec = s\ufvec$ in \eqref{eq:duality:proof:sUFOscaled} yields the equivalent formulation 
\begin{equation}
\label{eq:duality:proof:sUFOfinal}
\begin{aligned}
	\Big\{
			\altufvec\;\colon\;& \altufvec\in\mathbb{R}^\numuf_+,\; \altuvec\in\mathbb{R}^\numu,
            \\& 
			\vmat_g \altuvec  \succeq_{\mathcal{K}_g} s\dvec_g 
			,\;
			\fold^+(\altufvec) + (s-1)\fold^+(\nullvec) = \embed \altuvec
			\\& s\fold_{ij}(\underline{\uescal}_{gi}) \leq \altufscal_{ij} \leq s\fold_{ij}(\overline{\uescal}_{gi}) \;&&
				\forall i\in[\numue],j\in[\numuef_i]
			\\&\frac{\altufscal_{ij} - s\fold_{ij}(\underline{\uescal}_{gi})}
			{\fold_{ij}(\overline{\uescal}_{gi}) - \fold_{ij}(\underline{\uescal}_{gi})}
			\geq
			\frac{\altufscal_{i,j+1} - s\fold_{i,j+1}(\underline{\uescal}_{gi})}
			{\fold_{i,j+1}(\overline{\uescal}_{gi}) - \fold_{i,j+1}(\underline{\uescal}_{gi})}
			\; &&\begin{aligned}
				\forall	& i\in[\numue],j\in[\numuef_i] \\[-1ex]
				&\mathrm{ with }\, \underline{\uescal}_{gi} < z_{ij} < \overline{\uescal}_{gi}
			\end{aligned}
			\\&
			\fdist(\altufvec, [\zvec^-, \zvec^+]) + (s-1)\fdist(\nullvec, [\zvec^-, \zvec^+]) \leq s\maxdist_g ([\zvec^-, \zvec^+]) && \forall [\zvec^-, \zvec^+]\in \mathcal{Z}
		\Big\},
\end{aligned}
\end{equation}
which is manifestly conic representable. 
Finally, note that the last condition of~\eqref{eq:duality:proof:sUFOfinal} resembles the affine cuts~\eqref{eq:counterpart_dual_cuts} since
\begin{align*}
    &
    \fdist(\altufvec, [\zvec^-, \zvec^+]) + (s-1)\fdist(\nullvec, [\zvec^-, \zvec^+])
    \\
    \overset{\text{(a)}}{=}
    &
    \sum_{i\in[\numue]} \left(
    \sum\limits_{j \in \numuef^+_i} \altufscal_{ij} 
    + z^-_i - \underline{\uescal}_i
    -\sum\limits_{j \in \numuef^-_i} \altufscal_{ij} \right)
    + (s-1) (z^-_i - \underline{\uescal}_i)
    \\
    \overset{\text{(b)}}{=}
    &
    s \sum_{i\in[\numue]} \left(
    \sum\limits_{j \in \numuef^+_i} \frac{1}{s}\altufscal_{ij} 
    + z^-_i - \underline{\uescal}_i
    -\sum\limits_{j \in \numuef^-_i} \frac{1}{s}\altufscal_{ij} \right)
    \\
    \overset{\text{(c)}}{=}
    &
    s \fdist(\altufvec / s, [\zvec^-, \zvec^+]),
\end{align*}
where (a) and (c) follow from the definition of $\fdist$ and (b) follows from simplifying the term.

The proof for \ref{obs:conic-reformulation-UFOG} follows the same line of arguments if we replace $\UFO$ with $\UFOG$ and drop the constraints in our representation of $s\UFO_g$ that pertain to the cuts $\fdist(\ufvec, [\zvec^-, \zvec^+]) \leq \maxdist_g ([\zvec^-, \zvec^+])$, $[\zvec^-, \zvec^+] \in \mathcal{Z}$.
\end{proof}

\section{Proof of Proposition~\ref{pro:np_hard}}
\label{appendix:proof:pro:np_hard}
\begin{proof}[Proof of Proposition~\ref{pro:np_hard}]
We show strong NP-hardness of the separation problem~\eqref{eq:subproblem} via a reduction from \textsc{Minimum Vertex Cover}, which is known to be strongly NP-hard \citep{Karp1972}. 
To this end, consider an undirected connected graph $(\mathcal{V}, \mathcal{E})$ with $n$ vertices and $m$ edges, and let $\wmat\in\mathbb{R}^{n\times m}$ be the incidence matrix that satisfies $W_{ij} = 1$ if the $i$\textsuperscript{th} vertex is incident to the $j$\textsuperscript{th} edge; $W_{ij} = 0$ otherwise. The optimal value of \textsc{Minimum Vertex Cover} is then given by the optimal value of the binary linear program
\begin{equation}
\label{eq:proof_np_hard:mvc}
    \min\{\unitvec^\intercal \yvec \,\colon\,
    \wmat^\intercal \yvec \geq \unitvec
    \,,\, \yvec \in \{0,1\}^n\}.
\end{equation}
In the remainder of the proof, we define a valid instance to the separation problem~\eqref{eq:subproblem} whose optimal value coincides with that of problem~\eqref{eq:proof_np_hard:mvc}, modulo some affine transformation.

Consider the instance of problem~\ref{eq:P} with $G=1$ support set of the form $\U = \{\uvec\in\mathbb{R}_+^\numu\,\colon\, \overline{\wmat}\uvec\leq2\unitvec\}$, $\numu=m+n$ and $\overline{\wmat} = (\wmat, \unitytary)$, as well as the embedding $\embed = \unitytary$ satisfying $\numue=\numu$ and $\UE = \U$. We omit the other problem data as it is irrelevant for the separation problem~\eqref{eq:subproblem}. We choose the bounding box $\EBOX = [-2\unitvec, 2\unitvec]$, which is not tight but allows for a more intuitive proof; the extension to a tight support is tedious but straightforward. Our folding operator $F$ employs a single breakpoint at $0$ in every dimension, that is, we have
$\numuef_i=2$ and $z_{i0} = -2 < z_{i1} = 0 < z_{i2} = 2$ for all $i\in[\numue]$. We wish to separate the lifted vector $\ufvec\in\mathbb{R}^{2\numue}$ with components $\ufscal_{ij} = 1$ for all $i\in[\numue], j\in[\numuef_i]$ and the scaling factor $s=1$. Throughout the proof, we omit the indices $g$ and $k$ for better readability.

To confirm that our choice of $\U$, $\UE$, $\mathcal{Z}$, $\ufvec$, and $s$ induces a valid instance of the separation problem~\eqref{eq:subproblem}, we show that $\ufvec\in\UFOG$. By the definition of $\UFOG$ in~\eqref{eq:outer_general} and the representation of $\FBOXg$ in Lemma \ref{lem:rectangular_hull}, this holds whenever the following three conditions are satisfied:
\begin{enumerate}
    \item $\fold^+ (\ufvec) \in\UE$: This holds since $\fold^+(\ufvec)=\nullvec\in\UE$ by definition of $\fold^+$, $\ufvec$ and $\UE$.
    \item $\fold_{ij}(\underline{\uescal}_i)\leq \ufscal_{ij}\leq \fold_{ij}(\overline{\uescal}_i)$ for all $i\in[I]$ and $j\in[\numuef_i]$: This holds since $\fold_{ij}(\underline{\uescal}_i) = 0$ and $\fold_{ij}(\overline{\uescal}_i) = 2$ by construction of $Z$, as well as $\ufscal_{ij} = 1$, for all $i\in[\numue]$ and $j\in[\numuef_i]$.
    \item $\frac{\ufscal_{ij} - \fold_{ij}(\underline{\uescal}_i)} {\fold_{ij}(\overline{\uescal}_i) - \fold_{ij}(\underline{\uescal}_i)} \geq \frac{\ufscal_{i,j+1} - \fold_{i,j+1}(\underline{\uescal}_i)}{\fold_{i,j+1}(\overline{\uescal}_i) - \fold_{i,j+1}(\underline{\uescal}_i)}$ for all $i\in[I]$ and $j\in[\numuef_i]$ with $\underline{\uescal}_i<z_{ij}<\overline{\uescal}_i$: This holds since $\fold_{ij}(\underline{\uescal}_i) = 0$ and $\fold_{ij}(\overline{\uescal}_i) = 2$ by construction of $Z$, as well as $\ufscal_{ij} = 1$, for all $i\in[\numue]$ and $j\in[\numuef_i]$.
\end{enumerate}

We now prove the main claim by transforming our instance of the separation problem to the \textsc{Minimum Vertex Cover} instance through a series of equivalent problems. To this end, we first reformulate the distances $\fdist$ and $\maxdist$ in the separation problem. For $\fdist$, we find that
\begin{align*}
\fdist(\ufvec/s, [\zvec^-, \zvec^+]) 
    \overset{\text{(a)}}{=}&
    \sum_{i\in[\numue]} \left(
    \sum\limits_{j \in \numuef^+_i} \ufscal_{ij} 
    + z^-_i - \underline{\uescal}_i
    -\sum\limits_{j \in \numuef^-_i} \ufscal_{ij} \right)\\
    =&
    \sum_{i\in[\numue]} \left(
    \sum\limits_{j \in \numuef^+_i} 1 
    + z^-_i + 2
    -\sum\limits_{j \in \numuef^-_i} 1 \right)
    \overset{\text{(b)}}{=}
    \sum_{i\in[\numue]} \left(
    \frac{z_i^- -z_i^+}{2}  + 2
    \right),
\end{align*}
where (a) immediately follows from the definition of $\fdist$ and $s=1$, while (b) is due to $\lvert\numuef^+_i\rvert = 1 - \frac{z^+_i}{2}$ and $\lvert\numuef^-_i\rvert = \frac{z^-_i}{2}+1$. For $\maxdist$, we find that
\begin{align*}
    \maxdist([\zvec^-, \zvec^+])&=\max_{\uevec\in\UE}\min_{\uevec'\in[\zvec^-, \zvec^+]} \norm{\uevec - \uevec'}_1 \\&\overset{\text{(a)}}{=} \max_{\uevec\in\UE}\sum_{i\in[\numue]} \min_{\uescal'\in[z_i^-, z_i^+]} \lvert\uescal_i-\uescal'\rvert \overset{\text{(b)}}{=} 
    \max_{\uevec\in\UE} \sum_{i\in[\numue]} \begin{cases}
        z_i^--\uescal_i & \case z_i^- = z_i^+ = 2\\
        \uescal_i - z_i^+ & \case z_i^+\leq 0\\
        0 & \otherwise,
    \end{cases}
\end{align*}
where (a) follows from the properties of the $1$-norm and 
(b) is due to a case distinction and $\UE\subseteq [0,2]^\numue$. 
Combining both reformulations, we equivalently express the separation problem as
{\allowdisplaybreaks
\begin{align}
    &\max_{[\zvec^-, \zvec^+]\in\mathcal{Z}}
    \fdist(\ufvec/s, [\zvec^-, \zvec^+]) 
    - 
    \maxdist([\zvec^-, \zvec^+])
    \nonumber\\
    \overset{\text{(a)}}{=}&
    \max_{[\zvec^-, \zvec^+]\in\mathcal{Z}}
    \sum_{i\in[\numue]} \left(
    \frac{z_i^- -z_i^+}{2}  + 2
    \right)
    - 
    \max_{\uevec\in\UE} \sum_{i\in[\numue]} \begin{cases}
        2-\uescal_i & \case z_i^- = z^+_i = 2\\
        \uescal_i - z_i^+ & \case z_i^+\leq 0\\
        0 & \otherwise
    \end{cases}  \nonumber\\
    \overset{\text{(b)}}{=}&
    \max_{[\zvec^-, \zvec^+]\in\mathcal{Z}}
    \min_{\uevec\in\UE} \sum_{i\in[\numue]} 
    \begin{cases}
        \frac{z_i^- -z_i^+}{2}
        +\uescal_i & \case z_i^- = z^+_i = 2\\
    \frac{z_i^- -z_i^+}{2}  + 2 +
     z_i^+ - \uescal_i & \case z_i^+\leq 0\\
    \frac{z_i^- -z_i^+}{2}  + 2
     & \otherwise
    \end{cases} \nonumber\\
    \overset{\text{(c)}}{=}&
    \max_{\zvec^+ \in \{0,2\}^\numue}
    \min_{\uevec\in\UE} \sum_{i\in[\numue]} 
    \begin{cases}
    -\frac{z_i^+}{2} + 2 + z_i^+ - \uescal_i & \case z_i^+=0\\
    -\frac{z_i^+}{2} + 2
     & \case z_i^+=2
    \end{cases} \nonumber\\
    \overset{\text{(d)}}{=}&
    \max_{\zvec^+ \in \{0,2\}^\numue}
    \min_{\uevec\in\UE} \sum_{i\in[\numue]} 
    \left(-\frac{z_i^+}{2} + 2 - \uescal_i\left(1-\frac{z_i^+}{2}\right)\right) \nonumber\\
    \overset{\text{(e)}}{=}&
    \max_{\overline{\deltavec} \in \{0,1\}^\numue}
    \min_{\uevec\in\UE} \sum_{i\in[\numue]} 
    \left(1+\overline{\delta}_i - \uescal_i\overline{\delta}_i\right) \nonumber\\
    \overset{\text{(f)}}{=}&
    \max\{\unitvec^\intercal\overline{\deltavec} + 
    \min\{-\overline{\deltavec}^\intercal\uevec \,\colon\,
    \overline{\wmat} \uevec \leq 2 \unitvec \,,\, \uevec \geq \nullvec \}
    \,\colon\,
    \overline{\deltavec} \in \{0,1\}^\numue\} + \numue \nonumber\\
    \overset{\text{(g)}}{=}&
    \max\{\unitvec^\intercal\overline{\deltavec} - 
    \max\{\overline{\deltavec}^\intercal\uevec \,\colon\,
    \overline{\wmat} \uevec \leq 2 \unitvec \,,\, \uevec \geq \nullvec \}
    \,\colon\,
    \overline{\deltavec} \in \{0,1\}^\numue\} + \numue. \label{eq:proof_np_hard:subproblem}
\end{align}
}%
Here, 
(a) follows from the earlier reformulations of $\fdist$ and $\maxdist$, 
(b) merges the two terms, and
(c) holds since there always exists an optimal solution $[\zvec^-, \zvec^+]$ satisfying $\zvec^- = \nullvec$. Indeed, if $z^-_i = 2$ for some $i \in [I]$, implying that $z^+_i = 2$ as well (case~1), then setting $z^-_i = 0$ (and thus switching to case~3) weakly increases the objective value. Likewise, if $z^-_i = -2$ for some $i \in [I]$, then the expressions in cases~2 and~3 imply that increasing $z^-_i$---and, possibly in case~2, $z^+_i$---to $0$ increases the objective value.
Equation (d) holds by construction, and (e) substitutes $\overline{\delta}_i = 1-z^+_i/2$. Equation (f) reformulates the problem and replaces $\UE$ with its definition, and (g) negates the inner optimization problem.

Dualizing the inner maximization in \eqref{eq:proof_np_hard:subproblem} yields 
\begin{align}
   &\max\{\unitvec^\intercal\overline{\deltavec} - \min\{2\unitvec^\intercal\yvec \,\colon\,
    \overline{\wmat}^\intercal \yvec \geq \overline{\deltavec}\,,\,
    \yvec \geq \nullvec\}
    \,,\, 
    \overline{\deltavec} \in \{0,1\}^\numue\} + I \nonumber\\
    \overset{\text{(a)}}{=}&-
    \min\{2\unitvec^\intercal\yvec - \unitvec^\intercal\deltavec - \unitvec^\intercal\gammavec \,\colon\,
    \wmat^\intercal \yvec \geq \gammavec
    \,,\, 
    \yvec \geq \deltavec
    \,,\, 
    \yvec \geq \nullvec
    \,,\, 
    (\gammavec, \deltavec) \in \{0,1\}^{(m+n)}\} + I.\label{eq:proof_np_hard:mvcrel2}
\end{align}
Strong LP duality and finiteness of the optimal objective values hold since $\uevec=\nullvec$ is a primal feasible solution and $\yvec=\unitvec$ is a dual feasible solution for any $\overline{\deltavec}$. Equation (a) negates the maximum and replaces $\overline{\deltavec}$ with $(\bm{\gamma}, \bm{\delta})$. Thus, the values of~\eqref{eq:proof_np_hard:subproblem} and~\eqref{eq:proof_np_hard:mvcrel2} coincide.

We next claim that there is always an optimal solution $(\yvec, \gammavec, \deltavec)$ to problem~\eqref{eq:proof_np_hard:mvcrel2} with $\gammavec=\unitvec$. Indeed, assume that $\gamma_e = 0$ for some edge $e=\{ v, w \} \in\mathcal{E}$, and observe that $2y_v - \delta_v - \gamma_e \geq 0$ since $y_v \geq \delta_v$. By setting $y_v = \delta_v = \gamma_e = 1$, we obtain another feasible solution whose contribution $2y_v - \delta_v - \gamma_e = 0$ to the objective function of~\eqref{eq:proof_np_hard:mvcrel2} is weakly less than before. Fixing $\gammavec=\unitvec$ simplifies the optimization problem~\eqref{eq:proof_np_hard:mvcrel2} to
\begin{equation}
\label{eq:proof_np_hard:mvcrel}
   - \min\{2\unitvec^\intercal\yvec - \unitvec^\intercal\deltavec \,\colon\,
    \wmat^\intercal \yvec \geq \unitvec
    \,,\, 
    \yvec \geq \deltavec
    \,,\, 
    \yvec \geq \nullvec
    \,,\, 
    \deltavec \in \{0,1\}^n\} + I + m.
\end{equation}

We conclude the proof by arguing that (\ref{eq:proof_np_hard:mvc}) attains the same optimal value as the minimization problem inside (\ref{eq:proof_np_hard:mvcrel}), that is, that $\eqref{eq:proof_np_hard:mvc} = -\eqref{eq:proof_np_hard:mvcrel} + I + m$. To this end, observe first that any solution $\yvec$ feasible in~\eqref{eq:proof_np_hard:mvc} can be augmented to a feasible solution
$(\yvec, \deltavec)$ to~\eqref{eq:proof_np_hard:mvcrel} that attains the same objective value in the minimization problem inside~\eqref{eq:proof_np_hard:mvcrel} by setting $\deltavec = \yvec$. Conversely, we claim that there is always an optimal solution $(\yvec, \deltavec)$ to~\eqref{eq:proof_np_hard:mvcrel} with $\yvec \in \{0,1\}^n$; the partial solution $\yvec$ then attains the same objective value in (\ref{eq:proof_np_hard:mvc}) as $(\yvec, \deltavec)$ does in the minimization problem inside~\eqref{eq:proof_np_hard:mvcrel}. Assume to the contrary that all optimal solutions $(\yvec, \deltavec)$ to~\eqref{eq:proof_np_hard:mvcrel} are non-binary, and fix an optimal solution $(\yvec, \deltavec)$ to~\eqref{eq:proof_np_hard:mvcrel} with the fewest number of non-binary components in $\yvec$. Fix any $v \in \mathcal{V}$ with $y_v \in (0, 1)$. Then there must be $\{ v, w \} \in \mathcal{E}$ such that $\wmat^\intercal \yvec \geq \unitvec$ imposes the covering condition $y_v + y_w \geq 1$, which implies that either $y_v \geq 1/2$ or $y_w \geq 1/2$ or both. Assume without loss of generality that $y_v \geq 1/2$. In that case, setting $y_v = \delta_v = 1$ results in another feasible solution with fewer non-binary components that is at least as good as the previous one. This leads to a contradiction, and we thus conclude that there is indeed always an optimal solution $(\yvec, \deltavec)$ to~\eqref{eq:proof_np_hard:mvcrel} with $\yvec \in \{0,1\}^n$.
\end{proof}

\section{Proof of Theorem~\ref{thm:symmetric}}
\label{appendix:proof:thm:symmetric}
We structure the proof of Theorem~\ref{thm:symmetric} into multiple lemmas. 
We first use the permutation invariance of Assumptions~\ref{ass:symmetric_Z} and~\ref{ass:symmetric_concave} to identify analytical representations for the distance function $\maxdist$. 
We split this simplification into three lemmas. 
Lemma~\ref{lem:rotational_solution} generalizes Lemma~3 of \citet{BenTal2020} to show that for each $i\in[\numue]$, $\UE$ contains realizations with one-norm $\eta(i)$ that are constant on their first $i$ components and zero on their remaining components. 
Lemma~\ref{lem:symmetric_solution} shows that there always exists an optimal solution to the separation problem that is symmetric around the origin. 
Finally, Lemma~\ref{lem:symmetric_dist} combines these two results to derive an analytical representation for $\maxdist$. 
Next, we use this representation to identify a small subset of solutions guaranteed to contain an optimal solution for the separation problem in Lemma~\ref{lem:reduced_symmetric_solution_structure}. 
We conclude the proof of part~\ref{thm:symmetric_algo} of Theorem~\ref{thm:symmetric} by describing how to efficiently enumerate this subset in Algorithm~\ref{alg:symmetric} and prove its validity in Lemma~\ref{lem:symmetric_alg}. 
For part~\ref{thm:symmetric_full} of Theorem~\ref{thm:symmetric}, Lemma~\ref{lem:symmetric_full} shows that the additional property in the statement of the theorem further reduces the set of possible solutions to the set of squares. 
We omit the index $k\in [\numc]$ of the separation problem considered and use $\altufvec = \ufvec/s$ for increased readability throughout the proof. 
Further, we assume that $\EBOX$ is a tight bounding box for $\UE$, that is $\underline{\uescal}_i\unitvec_i, \overline{\uescal}_i\unitvec_i\in\UE$ for all $i\in[\numue]$.
The extension to general bounding boxes $\EBOX$ is straightforward, but it requires additional tedious case distinctions leading to a less intuitive proof.

\begin{lemma}
\label{lem:rotational_solution}
    Let $\UE$ satisfy property \ref{ass:symmetric_U} of Assumption~\ref{ass:symmetric_concave}. 
    Then for each $i\in[\numue]$ there exists a $\lambda(i)$ such that
    $
        \sum_{i'\in[i]} \lambda(i) \unitvec_{i'} \in \UE,
    $
    and
    $
        i \lambda(i) = \eta(i).
    $
\end{lemma}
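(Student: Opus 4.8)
The plan is to realize $\eta(i)$ as the optimal value of $\max_{\uevec\in\UE}\sum_{i'\in[i]}\lvert\uescal_{i'}\rvert$, fix a maximizer $\uevec^\star$ (which exists since $\UE=\embed\,\U$ is a linear image of a compact set and hence compact), and then symmetrize it into the desired constant vector using the signed permutation invariance of Assumption~\ref{ass:symmetric_concave}\ref{ass:symmetric_U} together with the convexity of $\UE$. The convexity is available because the support sets are conic representable, hence convex, and $\embed$ is linear; I would make this explicit, as it is genuinely needed (see the final paragraph).

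First I would exploit the sign flips contained in $\mathcal{S}$: applying the diagonal signed permutation that negates every negative component among the first $i$ coordinates keeps the point in $\UE$ by Assumption~\ref{ass:symmetric_concave}\ref{ass:symmetric_U} and leaves $\sum_{i'\in[i]}\lvert\uescal_{i'}\rvert$ unchanged, so I may assume $\uescal^\star_{i'}\geq 0$ for all $i'\in[i]$. Next I would zero out the trailing coordinates: for each $i'>i$, averaging $\uevec^\star$ with its reflection $\smat\uevec^\star$, where $\smat\in\mathcal{S}$ flips only coordinate $i'$, produces by convexity a point still in $\UE$ whose $i'$-th component equals $0$ while its first $i$ components are untouched. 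Iterating over all $i'>i$ yields a point $\uevec'\in\UE$ that is nonnegative, supported on its first $i$ coordinates, and satisfies $\sum_{i'\in[i]}\uescal'_{i'}=\eta(i)$.

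The core step is the symmetrization over coordinate permutations. Letting $\smat_\pi\in\mathcal{S}$ denote the permutation matrix of $\pi\in\mathrm{Sym}([i])$ that acts on the first $i$ coordinates and fixes the rest, each $\smat_\pi\uevec'\in\UE$ by Assumption~\ref{ass:symmetric_concave}\ref{ass:symmetric_U}, and convexity gives $\bar{\uevec}=\frac{1}{i!}\sum_{\pi}\smat_\pi\uevec'\in\UE$. By construction $\bar{\uevec}$ is invariant under permutations of its first $i$ components and therefore constant on them, equal to $\frac{1}{i}\sum_{i'\in[i]}\uescal'_{i'}=\eta(i)/i$, while its remaining components stay zero. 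Setting $\lambda(i)=\eta(i)/i$ then gives $\bar{\uevec}=\sum_{i'\in[i]}\lambda(i)\unitvec_{i'}\in\UE$ with $i\,\lambda(i)=\eta(i)$, as claimed; note that $i\geq 1$ on $[\numue]$, so the division is well defined.

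I expect the main obstacle to be the reliance on convexity: the averaging operations only remain inside $\UE$ because $\UE$ is convex, and the statement in fact fails for non-convex permutation-invariant supports (for instance $\{\pm\unitvec_1,\pm\unitvec_2\}$ with $i=2$ admits no constant point of $\ell_1$-norm $\eta(2)=1$), so I would flag the convexity of $\UE$ as the essential ingredient. A secondary point to handle carefully is the simultaneous need to both equalize the first $i$ components and annihilate the trailing ones, which is why I separate the reflection step that zeroes coordinates $i'>i$ from the permutation-averaging step that equalizes coordinates $i'\leq i$.
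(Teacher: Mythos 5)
Your proof is correct and, at its core, follows the same strategy as the paper's: fix a maximizer of $\sum_{i'\in[i]}\lvert\uescal_{i'}\rvert$ over $\UE$, use the sign flips contained in $\mathcal{S}$ to pass to a nonnegative maximizer, and then symmetrize it into a point that is constant on its first $i$ coordinates. The difference is in how the symmetrization is handled: the paper stops after the sign-flip reduction and delegates the rest to Lemma~3 of \citet{BenTal2020}, which covers the nonnegative case, whereas you carry it out explicitly---averaging with single-coordinate reflections to annihilate the coordinates with index larger than $i$, then averaging over permutations of the first $i$ coordinates to equalize them. What your self-contained version buys is transparency about the hypotheses: both of your averaging steps rely on convexity of $\UE$, which is not part of Assumption~\ref{ass:symmetric_concave}\ref{ass:symmetric_U} and is never mentioned in the paper's proof; it is hidden inside the citation (the uncertainty sets of \citet{BenTal2020} are convex by standing assumption) and inside the paper's broader setting, where the supports are conic representable (\emph{cf.}~Observation~\ref{obs:conic-reformulation}) and $\UE=\embed\,\U$ is a linear image of a compact convex set. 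Your counterexample $\{\pm\unitvec_1,\pm\unitvec_2\}$ (signed-permutation invariant, $\eta(2)=1$, yet containing no point $(\tfrac{1}{2},\tfrac{1}{2})$) shows this is not pedantry: the lemma as literally stated is false without convexity. So your write-up is both correct and, on this point, more careful than the paper's; the only cosmetic remark is that your reflection-averaging step to zero out the trailing coordinates, and the subsequent permutation averaging, could be merged into a single average over the subgroup of $\mathcal{S}$ that permutes the first $i$ coordinates and arbitrarily flips the signs of the remaining ones, but this changes nothing of substance.
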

\begin{proof}[Proof of Lemma~\ref{lem:rotational_solution}]
Fix any $\uevec \in \argmax_{\uevec \in \UE} \sum_{i'\in[i]} \lvert \uescal_{i'} \rvert$. 
Let $\hat{\uevec}$ be the vector with absolute values of $\uevec$ in each component, that is, $\hat{\uescal}_{i'} = \lvert\uescal_{i'}\rvert$. 
By axis symmetry of $\UE$, we have $\hat{\uevec} \in \UE$. 
Thus, we assume w.l.o.g.~that $\uevec\geq 0$. 
The remainder of the proof follows from Lemma~3 of \citet{BenTal2020}, which considers the special case of Lemma~\ref{lem:rotational_solution} where $\UE$ is non-negative.
\end{proof}

\begin{lemma}
    \label{lem:symmetric_solution}
    Let $Z$ satisfy Assumption~\ref{ass:symmetric_Z} and let $\UE$ satisfy property \ref{ass:symmetric_U} of Assumption~\ref{ass:symmetric_concave}.
    Then there exists an optimal solution $[\zvec^-,\zvec^+]$ for the separation problem that satisfies
    $
    \zvec^- = - \zvec^+.
    $
\end{lemma}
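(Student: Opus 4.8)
The plan is to start from an arbitrary optimal box $B = [\zvec^-, \zvec^+] \in \mathcal{Z}$ and to exhibit an origin-symmetric box that is no worse. The natural candidate is the smallest symmetric box containing $B$, namely $\hat{B} = [-\boldsymbol{r}, \boldsymbol{r}]$ with $r_i = \max\{|z^-_i|, |z^+_i|\}$. Because the grid $Z$ is invariant under all signed permutations $\mathcal{S}$ (Assumption~\ref{ass:symmetric_Z}), each $r_i$ is again a grid coordinate and hence $\hat{B} \in \mathcal{Z}$; moreover $\hat{B}$ is symmetric by construction and $B \subseteq \hat{B}$. It therefore suffices to prove $\fdist(\altufvec, \hat{B}) - \maxdist(\hat{B}) \geq \fdist(\altufvec, B) - \maxdist(B)$, since then $\hat{B}$ is itself optimal and satisfies $\zvec^- = -\zvec^+$.

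The two terms of the objective respond oppositely to enlarging $B$ to $\hat{B}$, so the heart of the argument is to show that the loss incurred in $\fdist$ is dominated by the gain in $\maxdist$. For the loss, I would use that $\fdist(\altufvec, \cdot)$ is separable in the box endpoints: its coordinate-$i$ contribution $\sum_{j \in \numuef^+_i}\altufscal_{ij} + z^-_i - \underline{\uescal}_i - \sum_{j \in \numuef^-_i}\altufscal_{ij}$ is a nondecreasing function of $z^-_i$ and a nonincreasing function of $z^+_i$. Passing from $B$ to $\hat{B}$ displaces exactly one endpoint of each coordinate outward, across the origin if necessary, and the membership bound $\altufscal_{ij} \leq \fold_{ij}(\overline{\uevec}) = z_{ij} - z_{i,j-1}$ together with $\altufscal_{ij} \geq 0$ shows that the resulting decrease of $\fdist$ in coordinate $i$ is at most the displacement of the moved endpoint. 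For the gain, $\maxdist(\hat{B}) \leq \maxdist(B)$ holds trivially because $\hat{B} \supseteq B$; to quantify it I would take a maximizer $\uevec^\star \in \UE$ of $\dist(\cdot, \hat{B})$ and bound $\maxdist(B) - \maxdist(\hat{B}) \geq \dist(\uevec^\star, B) - \dist(\uevec^\star, \hat{B})$, whose coordinate-$i$ term equals the full endpoint displacement precisely when $\uevec^\star$ lies beyond the enlarged face of that coordinate.

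The crux, and the step I expect to be the main obstacle, is to certify this gain simultaneously across all enlarged coordinates, that is, to choose the maximizer $\uevec^\star$ of the symmetric box $\hat{B}$ so that it points beyond the displaced face (with $\uescal^\star_i \leq -r_i$ if the lower endpoint moved and $\uescal^\star_i \geq r_i$ if the upper one did) in every active coordinate at once. This is where the permutation invariance of $\UE$ enters (property~\ref{ass:symmetric_U} of Assumption~\ref{ass:symmetric_concave}): since $\hat{B}$ is symmetric, the signs of $\uevec^\star$ may be flipped coordinatewise without leaving $\UE$ and without altering $\dist(\uevec^\star, \hat{B})$, which lets me orient each active coordinate away from $B$; and Lemma~\ref{lem:rotational_solution}, underpinned by the diminishing-returns property of $\eta$ (property~\ref{ass:concave} of Assumption~\ref{ass:symmetric_concave}), supplies maximizers whose $l_1$-mass is spread across the coordinates so that every displaced coordinate is in fact active. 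The coupling inherent in the maximization over $\UE$, namely that a single realization must witness the gain in all coordinates, is the genuine difficulty; the reflection $\uevec \mapsto -\uevec$ and the resulting identity $\maxdist([-\zvec^+, -\zvec^-]) = \maxdist([\zvec^-, \zvec^+])$ are the tools I would use to reduce the general case to this symmetric, spread-out situation.
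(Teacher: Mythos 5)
Your construction symmetrizes in the wrong direction, and the inequality you declare sufficient---that the smallest origin-symmetric box $\hat{B}\supseteq B$ satisfies $\fdist(\altufvec,\hat{B})-\maxdist(\hat{B})\geq\fdist(\altufvec,B)-\maxdist(B)$---is false, even when $B$ is optimal. Concretely, take $\numue=2$, $\UE=\{\uevec\,\colon\,\norm{\uevec}_1\leq 1\}$ (admissible by Proposition~\ref{pro:pballs}), one breakpoint at $0$ per coordinate so that the per-coordinate grid is $\{-1,0,1\}$, and the incumbent $\altufvec$ with $(\altufscal_{11},\altufscal_{12})=(1/2,1/2)$ and $(\altufscal_{21},\altufscal_{22})=(1,1)$; this point lies in $\UFOG$ because the bound and monotonicity conditions of Lemma~\ref{lem:rectangular_hull} hold and $\fold^+(\altufvec)=(0,1)\in\UE$. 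The degenerate box $B$ with $\zvec^-=\zvec^+=(0,-1)$ attains $\fdist(\altufvec,B)-\maxdist(B)=3-2=1$, and enumerating all $36$ grid boxes confirms that $1$ is the optimal value of~\eqref{eq:subproblem} (it is also attained by $[\nullvec,\nullvec]$, consistent with the lemma). Your hull of $B$ is $\hat{B}=\{0\}\times[-1,1]$, for which $\fdist(\altufvec,\hat{B})-\maxdist(\hat{B})=1-1=0<1$, so $\hat{B}$ is not even optimal. The failure sits exactly at the step you flag as the crux: enlarging coordinate $2$ from $\{-1\}$ to $[-1,1]$ loses $\altufscal_{21}+\altufscal_{22}=2$ in $\fdist$, but the compensating decrease of $\maxdist$ is only $1$, because after the enlargement the former maximizer $(0,1)$ lies inside $\hat{B}$ and no point of $\UE$ protrudes beyond the displaced face at all. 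No sign flip or appeal to Lemma~\ref{lem:rotational_solution} can repair this: a single realization $\uevec\in\UE$ must witness the gain in every displaced coordinate simultaneously, and its total $l_1$ mass is capped (this is precisely the diminishing-returns behavior of $\eta$), so the decrease of $\maxdist$ is in general strictly smaller than the total displacement that governs your $\fdist$ loss.

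The paper symmetrizes the other way around, in two phases, each by a one-coordinate exchange embedded in a minimal-violation contradiction argument. First, any endpoint lying strictly on the wrong side of the origin is moved only up to the origin ($z^+_{i^\star}<0\mapsto 0$); for this limited enlargement the decrease of $\maxdist$ does match the $\fdist$ loss, because by symmetry of $\UE$ the distance maximizer of the enlarged box can be taken on the opposite side of coordinate $i^\star$. Second, the longer side of each coordinate is \emph{shrunk} onto the reflection of the shorter side ($z^+_{i^\star}\mapsto -z^-_{i^\star}$ when $z^+_{i^\star}>-z^-_{i^\star}$); shrinking can only increase $\fdist$, and $\maxdist$ does not increase because the now-symmetric coordinate lets the maximizer be reflected onto the untouched side. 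In the example above this pipeline maps the optimal $B=\{(0,-1)\}$ first to $\{0\}\times[-1,0]$ and then to $[\nullvec,\nullvec]$, preserving the optimal value $1$ at every step, whereas your hull jumps to $\{0\}\times[-1,1]$ and loses optimality. In short, the correct symmetric candidate is essentially the largest symmetric box inside the origin-clipped $B$, not the smallest symmetric box containing $B$.
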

\begin{proof}[Proof of Lemma \ref{lem:symmetric_solution}]
We split the proof into two parts. 
First, we show that there always exists an optimal solution $[\zvec^-,\zvec^+]$ to the separation problem that satisfies $\zvec^-\leq \nullvec \leq \zvec^+$. 
We then use this property to prove the actual claim. 
We prove both parts using contradiction arguments. 

Let $[\zvec^-,\zvec^+]$ be an optimal solution to the separation problem with a minimal number of indices $i\in[\numue]$ such that $z^+_i<0$. 
Assume there was such an index $i^\star$. 
Define $\hat{\zvec}{}^+$ via
$$
\hat{z}{}^+_i = \begin{cases}
    z^+_i & \case i\neq i^\star\\
    0 & \case i=i^\star.
\end{cases}
$$
Then $[\zvec^-,\hat{\zvec}{}^+]\in\mathcal{Z}$, as $z^-_i \leq z^+_i \leq \hat{z}{}^+_i$ for all $i\in[\numue]$. 
Fix any $\uevec\in\argmax_{\uevec\in\UE}\dist(\uevec, [\zvec^-, \hat{\zvec}^+])$ maximizing the distance from the new rectangle. 
By $z^-_{i^\star} \leq z^+_{i^\star} < \hat{z}^+_{i^\star} = 0$ we have $\lvert \uescal_{i^\star} \rvert-\hat{z}{}^+_{i^\star} > \lvert \uescal_{i^\star} \rvert + z^-_{i^\star}$, which by maximality of $\uevec$ and symmetry of $\UE$ implies $\uescal_{i^\star} \geq 0$.
With this we find
\begin{align*}
    \maxdist([\zvec^-,\hat{\zvec}{}^+])
    &\overset{\text{(a)}}{=} \sum_{i\in[\numue]} 
    \left(\uescal_i - \hat{z}{}^+_i\right)_+ +
    \left(z^-_i - \uescal_i\right)_+\\
    &\overset{\text{(b)}}{=}
    \sum_{i\in[\numue]} 
    \left(\uescal_i - z^+_i\right)_+ +
    \left(z^-_i - \uescal_i\right)_+
    +z^+_{i^\star}
    \overset{\text{(c)}}{\leq}
    \maxdist([\zvec^-,\zvec^+]) + z^+_{i^\star},
\end{align*}
Here, (a) follows from the choice of $\uevec$,
(b) is due to $\uescal_{i^\star}\geq 0$, 
and
(c) holds since $\uevec$ is a feasible, though not necessarily optimal, solution for the maximization problem represented by $\maxdist([\zvec^-,\zvec^+])$.
Similarly we find
\begin{align*}
    \fdist(\altufvec, [\zvec^-,\hat{\zvec}{}^+]) 
    &\overset{\text{(a)}}{=} \sum_{i\in[\numue]} 
    \sum\limits_{j \in\hat{\numuef}{}_i^+} \altufscal_{ij} 
    + z^-_i - \underline{\uescal}_i - \sum\limits_{j \in \numuef_i^-} \altufscal_{ij}\\
    &\overset{\text{(b)}}{=} \sum_{i\in[\numue]} 
    \sum\limits_{j \in\numuef_i^+} \altufscal_{ij} 
    + z^-_i - \underline{\uescal}_i - \sum\limits_{j \in \numuef_i^-} \altufscal_{ij}
    - \sum\limits_{j \in \numuef_{i^\star}^+\setminus\hat{\numuef}{}_{i^\star}^+} \altufscal_{i^\star j} \\
    &\overset{\text{(c)}}{=} 
    \fdist(\altufvec, [\zvec^-,\zvec^+]) 
    - \sum\limits_{j \in \numuef_{i^\star}^+\setminus\hat{\numuef}{}_{i^\star}^+} \altufscal_{i^\star j}
    \\&\overset{\text{(d)}}{\geq} 
    \fdist(\altufvec, [\zvec^-,\zvec^+]) 
    + z^+_{i^\star},
\end{align*}
where we use $\hat{\numuef}{}_i^+=\{j\in[\numuef_i] \colon \hat{z}^+_i < z_{ij}\}$ analogously to $\numuef^+_i$. 
Here, (a) and (c) are due to the definition of $\fdist$,
(b) follows from $z^+_{i^\star}<\hat{z}^+_{i^\star}$ and splitting the sum, and
(d) follows from $\altufscal_{ij} \leq \fold_{ij}(\overline{\uescal}_i) \leq z_{j}-z_{j-1}$ by Lemma~\ref{lem:rectangular_hull}, contracting the sum, and $\hat{z}{}^+_{i^\star}=0$.
Combining the newly found inequalities for $\maxdist$ and $\fdist$ yields $\fdist(\altufvec, [\zvec^-,\hat{\zvec}{}^+]) - \maxdist([\zvec^-,\hat{\zvec}{}^+]) \geq \fdist(\altufvec, [\zvec^-,\zvec^+]) - \maxdist([\zvec^-,\zvec^+])$.
This contradicts the assumption that $[\zvec^-,\zvec^+]$ had a minimal number of indices $i\in[\numue]$ such that $z^+_i<0$.
The case for indices $i\in[\numue]$ with $z^-_i>0$ follows analogously, and we thus conclude that the separation problem always has an optimal solution $[\zvec^-,\zvec^+]$ with $\zvec^-\leq \nullvec \leq \zvec^+$. 

We now use this result to prove the claim of the lemma. 
Let $[\zvec^-,\zvec^+]$ be an optimal solution to the separation problem that satisfies $\zvec^-\leq \nullvec \leq \zvec^+$ with a minimal number of indices $i\in[\numue]$ such that $z^+_{i}>-z^-_{i}$.
Assume there was such an index $i^\star$.
Define $\hat{\zvec}{}^+$ via
$$
\hat{z}{}^+_i = \begin{cases}
    z^+_i & \case i\neq i^\star\\
    -z^-_i & \case i=i^\star.
\end{cases}
$$
By $z^-_{i^\star} \leq 0 \leq -z^-_{i^\star} = \hat{z}{}^+_{i^\star}$, we have $[\zvec^-, \hat{\zvec}{}^+]\in\mathcal{Z}$. 
Fix any $\uevec\in\argmax_{\uevec\in\UE}\dist(\uevec, [\zvec^-, \hat{\zvec}^+])$ maximizing the distance from the new rectangle. 
Then by $\hat{z}^+_{i^\star} = -z^-_{i^\star}$ we have $\lvert \uescal_{i^\star} \rvert-\hat{z}{}^+_{i^\star} = \lvert \uescal_{i^\star} \rvert + z^-_{i^\star}$.
By the symmetry of $\UE$ we can assume w.l.o.g.~that $\uescal_{i^\star} \leq 0$.
We thus have 
$$
\left(\uescal_{i^\star} - \hat{z}{}^+_{i^\star}\right)_+ + 
\left(z^-_{i^\star} - \uescal_{i^\star}\right)_+
= 
\left(z^-_{i^\star} - \uescal_{i^\star}\right)_+
= 
\left(\uescal_{i^\star} - z^+_{i^\star}\right)_+ + 
\left(z^-_{i^\star} - \uescal_{i^\star}\right)_+.
$$
With this we find that
\begin{align*}
    \maxdist([\zvec^-,\hat{\zvec}{}^+])
    &= \sum_{i\in[\numue]} 
    \left(\uescal_i - \hat{z}{}^+_i\right)_+ +
    \left(z^-_i - \uescal_i\right)_+\\
    &=
    \sum_{i\in[\numue]} 
    \left(\uescal_i - z^+_i\right)_+ +
    \left(z^-_i - \uescal_i\right)_+
    \leq
    \maxdist([\zvec^-,\zvec^+]).
\end{align*}
Using a similar argument as above for $\fdist$, we additionally find that
\begin{align*}
    \fdist(\altufvec, [\zvec^-,\hat{\zvec}{}^+]) 
    &\overset{\text{(a)}}{=} \sum_{i\in[\numue]} 
    \sum\limits_{j \in\hat{\numuef}{}_i^+} \altufscal_{ij} 
    + z^-_i - \underline{\uescal}_i - \sum\limits_{j \in \numuef_i^-} \altufscal_{ij}\\
    &\overset{\text{(b)}}{=} \sum_{i\in[\numue]} 
    \sum\limits_{j \in\numuef_i^+} \altufscal_{ij} 
    + z^-_i - \underline{\uescal}_i - \sum\limits_{j \in \numuef_i^-} \altufscal_{ij}
    + \sum\limits_{j \in \hat{\numuef}{}_{i^\star}^+\setminus\numuef_{i^\star}^+} \altufscal_{i^\star j} \\
    &\overset{\text{(c)}}{=} 
    \fdist(\altufvec, [\zvec^-,\zvec^+]) 
    + \sum\limits_{j \in \hat{\numuef}{}_{i^\star}^+\setminus\numuef_{i^\star}^+} \altufscal_{i^\star j}
    \\&\overset{\text{(d)}}{\geq} 
    \fdist(\altufvec, [\zvec^-,\zvec^+]).
\end{align*}
Here, (a) and (c) are due to the definition of $\fdist$,
(b) follows from $\hat{z}{}^+_{i^\star}\leq z^+_{i^\star}$ and splitting the sum, and
(d) holds since $\altufvec\geq \nullvec$.
Combining the inequalities for $\maxdist$ and $\fdist$ yields $\fdist(\altufvec, [\zvec^-,\hat{\zvec}{}^+]) - \maxdist([\zvec^-,\hat{\zvec}{}^+]) \geq \fdist(\altufvec, [\zvec^-,\zvec^+]) - \maxdist([\zvec^-,\zvec^+])$.
This contradicts the assumption that $[\zvec^-,\zvec^+]$ had a minimal number of violated indices. 
The case for indices $i$ with $z^+_{i}<-z^-_{i}$ follows analogously. 
We thus conclude that the separation problem always has an optimal solution $[\zvec^-,\zvec^+]$ with $\zvec^- = - \zvec^+$, which completes the proof.
\end{proof}

\begin{lemma}
\label{lem:symmetric_dist}
    Let $Z$ satisfy Assumption~\ref{ass:symmetric_Z} and let $\UE$ satisfy property \ref{ass:symmetric_U} of Assumption~\ref{ass:symmetric_concave}. 
    Let $[\zvec^-,\zvec^+]\in\mathcal{Z}$ with $\zvec^-=-\zvec^+$ and $\zvec^+$ being non-descending, that is, $\forall i\in[\numue-1] \colon z^+_i \leq z^+_{i+1}$.
    Then 
    $
    \maxdist([\zvec^-,\zvec^+]) = \max_{i\in[\numue]}\{\eta(i) - \sum_{i'\in[i]} z^+_{i'}\}.
    $
\end{lemma}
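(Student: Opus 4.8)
The plan is to first reduce $\maxdist$ to a separable closed form and then establish the claimed identity through two matching inequalities: a lower bound obtained by plugging in extremal realizations of $\UE$, and an upper bound obtained from the definition of $\eta$. Since $[\zvec^-,\zvec^+]\in\mathcal{Z}$ forces $\zvec^-\leq\zvec^+$, the hypothesis $\zvec^-=-\zvec^+$ immediately yields $\zvec^+\geq\nullvec$. A short coordinatewise case analysis then gives $(\uescal_i-z^+_i)_+ + (z^-_i-\uescal_i)_+ = (\lvert\uescal_i\rvert - z^+_i)_+$, so that
\[
\maxdist([\zvec^-,\zvec^+]) = \max_{\uevec\in\UE}\ \sum_{i\in[\numue]}\left(\lvert\uescal_i\rvert - z^+_i\right)_+ ,
\]
and it remains to show that this quantity equals $\max_{i\in[\numue]}\{\eta(i)-\sum_{i'\in[i]}z^+_{i'}\}$.

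For the lower bound I would invoke Lemma~\ref{lem:rotational_solution}: for each $i\in[\numue]$ the balanced realization $\uevec^{(i)}=\sum_{i'\in[i]}\lambda(i)\unitvec_{i'}$ lies in $\UE$ and satisfies $i\lambda(i)=\eta(i)$. Evaluating the objective at $\uevec^{(i)}$ and using $\zvec^+\geq\nullvec$ together with $(x)_+\geq x$ gives
\[
\sum_{i'\in[\numue]}\left(\lvert\uescal^{(i)}_{i'}\rvert - z^+_{i'}\right)_+ = \sum_{i'\in[i]}\left(\lambda(i) - z^+_{i'}\right)_+ \geq i\lambda(i) - \sum_{i'\in[i]}z^+_{i'} = \eta(i) - \sum_{i'\in[i]}z^+_{i'},
\]
and maximizing over $i$ delivers $\maxdist\geq\max_{i\in[\numue]}\{\eta(i)-\sum_{i'\in[i]}z^+_{i'}\}$.

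For the upper bound I would fix an arbitrary $\uevec\in\UE$. By property~\ref{ass:symmetric_U} of Assumption~\ref{ass:symmetric_concave}, replacing each $\uescal_i$ by $\lvert\uescal_i\rvert$ keeps the point in $\UE$ and does not change the objective, so I may assume $\uevec\geq\nullvec$. Setting $S=\{i:\uescal_i>z^+_i\}$ and $m=\lvert S\rvert$, the objective equals $\sum_{i\in S}\uescal_i - \sum_{i\in S}z^+_i$. Permutation invariance lets me move the coordinates of $S$ into the first $m$ positions, which shows $\sum_{i\in S}\uescal_i\leq\eta(m)$; and since $\zvec^+$ is non-descending, $\sum_{i\in S}z^+_i\geq\sum_{i'\in[m]}z^+_{i'}$. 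Hence, when $m\geq 1$, the contribution is at most $\eta(m)-\sum_{i'\in[m]}z^+_{i'}\leq\max_{i\in[\numue]}\{\eta(i)-\sum_{i'\in[i]}z^+_{i'}\}$.

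The main obstacle is the degenerate case $m=0$, where the objective vanishes and one must still certify that the right-hand side is non-negative. Here I would use the standing tightness of $\EBOX$: every breakpoint satisfies $z^+_i\leq\overline{\uescal}_i$, while axis symmetry and permutation invariance give $\overline{\uescal}_i=\max_{\uevec\in\UE}\lvert\uescal_1\rvert=\eta(1)$, so that $z^+_1\leq\eta(1)$ and therefore $\max_{i\in[\numue]}\{\eta(i)-\sum_{i'\in[i]}z^+_{i'}\}\geq\eta(1)-z^+_1\geq0$. Combining the two bounds closes the proof. I expect the only genuinely delicate points to be this boundary case and the correct use of permutation invariance to bound the sum over the arbitrary index set $S$ by $\eta(\lvert S\rvert)$, which neatly sidesteps a full rearrangement-inequality argument; notably, the diminishing-returns condition in property~\ref{ass:concave} of Assumption~\ref{ass:symmetric_concave} is not required for this lemma.
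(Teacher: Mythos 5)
Your proof is correct, and while its lower-bound half coincides with the paper's argument (both evaluate the distance at the balanced realizations $\sum_{i'\in[i]}\lambda(i)\unitvec_{i'}$ supplied by Lemma~\ref{lem:rotational_solution} and drop the positive-part operator), the upper-bound half takes a genuinely different route. The paper fixes a maximizer $\uevec$ of the distance, assumes $\uevec\geq\nullvec$ by symmetry, and then runs an exchange argument (with two case distinctions) to show that some maximizer is non-ascending; the active coordinates are then exactly the first $i^\star$ ones, where $i^\star$ is the last index with $\uescal_{i^\star}\geq z^+_{i^\star}$, whose existence is secured by the standing tight-bounding-box assumption made at the start of the appendix. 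You instead work with an \emph{arbitrary} $\uevec\in\UE$ (reduced to $\uevec\geq\nullvec$ and the separable form $\sum_i(\lvert\uescal_i\rvert-z^+_i)_+$), form the active set $S=\{i:\uescal_i>z^+_i\}$ with $m=\lvert S\rvert$, and bound the two sums separately: permutation invariance moves $S$ into the first $m$ coordinates to give $\sum_{i\in S}\uescal_i\leq\eta(m)$, and the non-descending ordering of $\zvec^+$ gives $\sum_{i\in S}z^+_i\geq\sum_{i'\in[m]}z^+_{i'}$. This sidesteps the rearrangement argument and the need for an ordered maximizer altogether; the only residual issue is the boundary case $S=\emptyset$, which you settle correctly via $z^+_1\leq\overline{\uescal}_1=\eta(1)$, whereas the paper excludes that case through the tightness assumption. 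What the paper's route buys is mainly stylistic uniformity: the same sort-and-exchange technique and the threshold index $i^\star$ reappear in Lemmas~\ref{lem:symmetric_solution}, \ref{lem:reduced_symmetric_solution_structure} and~\ref{lem:symmetric_alg}, so the sorted-maximizer structure is reused downstream. Your argument is shorter, needs no attainment or ordering of a maximizer, and, as you note, both proofs use only property \ref{ass:symmetric_U} of Assumption~\ref{ass:symmetric_concave}, consistent with the hypotheses of the lemma.
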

\begin{proof}[Proof of Lemma \ref{lem:symmetric_dist}]
We prove the claim by showing each of the inequalities ``$\leq$" and ``$\geq$" separately. 
For the ``$\geq$" direction, recall that by Lemma~\ref{lem:rotational_solution}, there is some $\lambda(i)$ such that $\sum_{i'\in[i]}\lambda(i)\unitvec_{i'}\in\UE$, and $i\lambda(i) = \eta(i)$ for each $i\in[\numue]$.
Then
$$
\maxdist([\zvec^-, \zvec^+]) 
\overset{\text{(a)}}{\geq} \max_{i\in[\numue]}\sum_{i'\in[i]}(\lambda(i) - z^+_{i'})_+ 
\overset{\text{(b)}}{\geq}
\max_{i\in[\numue]}\sum_{i'\in[i]}(\lambda(i) - z^+_{i'})
\overset{\text{(c)}}{=} \max_{i\in[\numue]}(\eta(i) - \sum_{i'\in[i]} z^+_{i'}),
$$
where (a) follows from $\sum_{i'\in[i]}\lambda(i)\unitvec_{i'}\in\UE$ and the definition of $\maxdist$, (b) follows from $\zvec^+\geq\nullvec$ and relaxing the $(\cdot)_+$ operator, and (c) follows from $i\lambda(i) = \eta(i)$. 
This concludes the ``$\geq$" direction.

For the ``$\leq$" direction, fix any $\uevec\in\argmax_{\uevec\in\UE}\dist(\uevec, [\zvec^-,\zvec^+])$. 
By the symmetry of $\UE$ and $\zvec^+ = - \zvec^-$, we assume w.l.o.g.~that $\uevec\geq \nullvec$.
First, we show that there always exists a maximizing $\uevec$ with non-ascending ordering, that is, $\uescal_i \geq \uescal_{i+1}$ for all $i\in[\numue-1]$. 
To see this, assume there were two indices $i_1 \leq i_2$ with $\uescal_{i_1} \leq \uescal_{i_2}$. 
We show $\left(\uescal_{i_1} - z^+_{i_1}\right)_+ +
\left(\uescal_{i_2} - z^+_{i_2}\right)_+ 
\leq
\left(\uescal_{i_2} - z^+_{i_1}\right)_+  +
\left(\uescal_{i_1} - z^+_{i_2}\right)_+$, which implies that swapping the values of $\uescal_{i_1}$ and $\uescal_{i_2}$ does not decrease $\dist(\uevec, [\zvec^-,\zvec^+])$.

For $\uescal_{i_1} \geq z^+_{i_1}$ the claim follows from
\begin{align*}
\left(\uescal_{i_1} - z^+_{i_1}\right)_+ +
\left(\uescal_{i_2} - z^+_{i_2}\right)_+ 
&\overset{\text{(a)}}{=}
\uescal_{i_1} - z^+_{i_1} +
\left(\uescal_{i_2} - \uescal_{i_1} + \uescal_{i_1} - z^+_{i_2}\right)_+ \\
&\overset{\text{(b)}}{\leq}
\uescal_{i_1} + \uescal_{i_2} - \uescal_{i_1} - z^+_{i_1} +
\left(\uescal_{i_1} - z^+_{i_2}\right)_+ 
\overset{\text{(c)}}{=}
\left(\uescal_{i_2} - z^+_{i_1}\right)_+  +
\left(\uescal_{i_1} - z^+_{i_2}\right)_+, 
\end{align*} 
where (a) follows from $\uescal_{i_1} \geq z^+_{i_1}$, 
(b) is due to $\uescal_{i_1}\leq \uescal_{i_2}$ and the piecewise linearity of $(\cdot)_+$, and
(c) holds since $z^+_{i_1}\leq \uescal_{i_1} \leq \uescal_{i_2}$. 

For $\uescal_{i_1} \leq z^+_{i_1}$ the claim follows from 
$$
\left(\uescal_{i_1} - z^+_{i_1}\right)_+ +
\left(\uescal_{i_2} - z^+_{i_2}\right)_+ 
\overset{\text{(a)}}{=}
\left(\uescal_{i_2} - z^+_{i_2}\right)_+ 
\overset{\text{(b)}}{\leq}
\left(\uescal_{i_2} - z^+_{i_1}\right)_+ 
\overset{\text{(c)}}{=}
\left(\uescal_{i_2} - z^+_{i_1}\right)_+  +
\left(\uescal_{i_1} - z^+_{i_2}\right)_+,
$$
where (a) follows from $\uescal_{i_1} \leq z^+_{i_1}$, 
(b) holds since $z^+_{i_1}\leq z^+_{i_2}$, and 
(c) is due to $\uescal_{i_1}\leq z^+_{i_1} \leq z^+_{i_2}$. 
Thus, by the permutation invariance of $\UE$, we can w.l.o.g assume that $\uevec$ is in non-ascending order. 

Let $i^\star\in[\numue]$ be the last index with $\uescal_{i^\star} \geq z^+_{i^\star}$. 
Such an index always exists by $\overline{\uescal}_1\geq z^+_{1}$ and $\overline{\uescal}_1\unitvec_1\in\UE$.
By the ordering of $\uevec$ and $\zvec^+$, we have $\uescal_{i} \geq z^+_{i}$ for all $i\leq i^\star$. 
We conclude the proof by showing that $\maxdist([\zvec^-, \zvec^+]) \leq \eta(i^\star) - \sum_{i\in[i^\star]} z^+_i$.
In particular, we find
\begin{align*}
    \maxdist([\zvec^-, \zvec^+]) &\overset{\text{(a)}}{=}
    \sum_{i\in\numue} 
    \left(\uescal_i - z^+_i\right)_+ +
    \left(z^-_i - \uescal_i\right)_+\\
    &\overset{\text{(b)}}{=}
    \sum_{i\in\numue} 
    \left(\uescal_i - z^+_i\right)_+ 
    \overset{\text{(c)}}{=}
    \sum_{i\in[i^\star]} 
    \uescal_i - z^+_i
    \overset{\text{(d)}}{\leq}
    \eta(i^\star) -
    \sum_{i\in[i^\star]} z^+_i,
\end{align*}
where (a) follows from the optimality of $\uevec$,
(b) holds since $\uevec\geq\nullvec$, 
(c) is due to the choice of $i^\star$, and 
(d) follows from the definition of $\eta$ and $\uevec\in\UE$.
\end{proof}

\begin{lemma}
\label{lem:reduced_symmetric_solution_structure}
    Let $Z$ satisfy Assumption~\ref{ass:symmetric_Z} and let $\UE$ satisfy Assumption~\ref{ass:symmetric_concave}. 
    Then there always exists an index pair $(i, j) \in [\numue] \times [\numuef]$, $j\geq\frac{\numuef}{2}$, such that there is an optimal solution $[\zvec^-,\zvec^+]$ for the separation problem that satisfies
    $z^+_{\pi(i')} = \begin{cases}
        z_{j} &\case i'\leq i\\
        z_{j+1} &\case i' > i
    \end{cases}$ and $\zvec^- = -\zvec^+$, where $\pi$ is an non-ascending ordering of $\{\delta_{i'j}\}_{i'\in[\numue]}$ with $\delta_{i'j}= z_{j+1}- z_{j} + \altufscal_{i',j+1} - \altufscal_{i',\numuef - j}$. 
\end{lemma}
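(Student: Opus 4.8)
The plan is to start from the reductions already established and then collapse the search space in two stages. First I would invoke Lemma~\ref{lem:symmetric_solution} to restrict attention to symmetric boxes $\zvec^- = -\zvec^+$; since $\zvec^+ \geq \nullvec$ and $z_{\numuef/2} = 0$ by Assumption~\ref{ass:symmetric_Z}, every component $z^+_i$ then takes a grid value in $\{z_{\numuef/2}, \dots, z_{\numuef}\}$, which already explains the restriction $j \geq \numuef/2$. Using the permutation invariance of $\UE$ (Assumption~\ref{ass:symmetric_concave}~\ref{ass:symmetric_U}), I would sort $\zvec^+$ non-descending and apply Lemma~\ref{lem:symmetric_dist} to write the separation objective as $\fdist(\altufvec, [\zvec^-, \zvec^+]) - \max_{i \in [\numue]} \{ \eta(i) - \sigma_i \}$ with partial sums $\sigma_i = \sum_{i' \in [i]} z^+_{i'}$. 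Two elementary facts drive the rest of the argument: $\fdist$ is additively separable across components, $\fdist = \sum_i C_i(z^+_i)$, and a direct computation from the definition of $\fdist$ shows that lowering a single component from $z_{j+1}$ to $z_j$ raises its summand by exactly $C_i(z_j) - C_i(z_{j+1}) = \delta_{ij}$, which identifies $\delta_{ij}$ as the per-step marginal effect on $\fdist$.

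The second preparatory step is to pin down where $\maxdist$ is attained. Because $\eta$ has non-increasing differences by Assumption~\ref{ass:symmetric_concave}~\ref{ass:concave} and the sorted increments $\sigma_i - \sigma_{i-1} = z^+_i$ are non-decreasing, the increments $[\eta(i) - \sigma_i] - [\eta(i-1) - \sigma_{i-1}] = [\eta(i) - \eta(i-1)] - z^+_i$ are non-increasing in $i$. Hence $i \mapsto \eta(i) - \sigma_i$ is concave and its maximum is attained at a single peak index $i^\star$, namely the first $i$ at which $\eta(i) - \eta(i-1)$ drops below $z^+_i$. This single-peak structure is also what later makes the additional hypothesis of part~\ref{thm:symmetric_full} (and the grid choice~\eqref{eq:full_break_points}) usable.

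The core of the proof, and the step I expect to be hardest, is to show that some optimal symmetric solution uses only two adjacent breakpoint values. The plan is a smoothing/exchange argument: starting from an optimal sorted $\zvec^+$ whose spread exceeds one grid gap, I would select the lowest and highest groups of components and either raise the lower group by one breakpoint or lower the higher group by one breakpoint, choosing whichever move does not decrease the objective, and iterate until all values lie in a single pair $\{z_j, z_{j+1}\}$. The delicate part is controlling both terms of the objective simultaneously: the change in $\fdist$ is the signed sum of the relevant $\delta_{ij}$, while the change in $\maxdist$ must be bounded using the single-peak structure above, since moving components that sit strictly past position $i^\star$ leaves $\sigma_{i^\star}$, and hence $\maxdist$, unchanged, whereas moving components at or below $i^\star$ shifts $\maxdist$ by a controlled amount. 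Concavity of $\eta$ is precisely what should guarantee that at least one of the two candidate moves is objective non-decreasing, so that the spread can always be reduced; the bookkeeping of how the peak $i^\star$ migrates as values are smoothed is the main obstacle and will require a careful case distinction.

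Once the solution is supported on $\{z_j, z_{j+1}\}$, the final step is a clean rearrangement argument. For a fixed count $m = \numue - i$ of components placed at the larger value $z_{j+1}$, $\maxdist$ depends only on the multiset of values and hence only on $m$ and $j$ (again by permutation invariance of $\UE$), so it is insensitive to which components are raised. By separability, $\fdist$ equals its value with all components at $z_j$ minus the sum of $\delta_{ij}$ over the raised components. Maximizing the objective therefore amounts to raising the $m$ components with the smallest $\delta_{ij}$, that is, keeping the largest-$\delta_{ij}$ components at $z_j$. Ordering the components by non-ascending $\delta_{ij}$ via $\pi$ and letting $i$ be the split point yields exactly the claimed solution $z^+_{\pi(i')} = z_j$ for $i' \leq i$ and $z^+_{\pi(i')} = z_{j+1}$ for $i' > i$, which completes the argument.
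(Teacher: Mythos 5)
Your preparatory reductions all match the paper's strategy: symmetry of the box via Lemma~\ref{lem:symmetric_solution}, the sorted representation of $\maxdist$ via Lemma~\ref{lem:symmetric_dist}, the additive separability of $\fdist$ with $\delta_{ij}$ as the exact marginal of moving one coordinate between $z_j$ and $z_{j+1}$, and your closing rearrangement step (keep the largest-$\delta_{ij}$ components at the lower value $z_j$, using that $\maxdist$ depends only on the multiset of values) is essentially identical to the second half of the paper's proof. The genuine gap is the core step, and you flag it yourself: you never prove that, at a symmetric sorted configuration whose entries span more than one grid cell, at least one of your two candidate moves (raise the whole lowest-value group by one breakpoint, or lower the whole highest-value group by one breakpoint) is objective-non-decreasing, nor that the iteration terminates. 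This is not routine bookkeeping, because neither move has a sign-definite effect on $\fdist - \maxdist$: raising a lowest group of size $m$ can cost up to $m$ gap widths of $\fdist$ (the bound $0 \leq \delta_{ij} \leq z_{j+1}-z_j$ itself needs $\altufvec \in \UFOG$, $j \geq \numuef/2$, Lemma~\ref{lem:rectangular_hull} and Assumption~\ref{ass:symmetric_Z}), while the induced decrease of $\maxdist$ can be as small as a single gap width whenever the peak of $i \mapsto \eta(i) - \sum_{i' \in [i]} z^+_{\sigma(i')}$ lies strictly inside that group; conversely, lowering the top group can strictly increase $\maxdist$ (when the maximum is attained, or nearly attained, at a position inside that group) while its $\fdist$ gain can be exactly zero. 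So each move can individually be strictly worsening; asserting that ``concavity of $\eta$ should guarantee'' one of them works is a conjecture, not an argument.

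For what it is worth, the conjecture is true and provable with precisely the ingredients you name, but via a contradiction you do not supply: if the raise-move fails, then the increments $\eta(i)-\eta(i-1)$ at the top of the lowest group must drop below $z_{j_{\min}+1}$ (the breakpoint just above the lowest occupied value), whereas if the lower-move fails, some increment at a position inside the top group must exceed $z_{j_{\max}-1} \geq z_{j_{\min}+1}$; together these violate the non-increasing differences of $\eta$. One must also observe that every move strictly shrinks the spread, so the smoothing terminates. The paper sidesteps this iterative peak-migration analysis entirely with a one-shot construction: from an optimal sorted solution it reads off the maximal peak index $i^\star$, picks $j^\star$ maximal with $z_{j^\star} \leq \eta(i^\star)-\eta(i^\star-1)$ and $z_{j^\star} \leq z^+_{\sigma(i^\star+1)}$, replaces all coordinates simultaneously by the two values $z_{j^\star}, z_{j^\star+1}$ split at position $i^\star$, and verifies in a single computation that the peak remains at $i^\star$ and the objective does not decrease. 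Until you supply either that verification or the contradiction argument sketched above, your proposal is a plan rather than a proof of the lemma's central assertion.
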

\begin{proof}[Proof of Lemma \ref{lem:reduced_symmetric_solution_structure}]
The proof proceeds in two parts. 
We first prove the characteristics of Lemma~\ref{lem:reduced_symmetric_solution_structure} for a potentially arbitrary ordering of $\{\delta_{ij}\}_{i\in[\numue]}$. 
Then, we use the resulting solution to construct another solution with the same objective value, following a non-ascending ordering of $\{\delta_{ij}\}_{i\in[\numue]}$.

For the first part of the proof, with a potentially arbitrary ordering of $\{\delta_{ij}\}_{i\in[\numue]}$, let $[\zvec^-,\zvec^+]$ with $\zvec^+=-\zvec^-$ be an optimal solution to the separation problem. 
By Lemma~\ref{lem:symmetric_solution}, such a solution always exists. 
Let $\sigma$ be an ordering of the indices such that $z^+_{\sigma(i)}\leq z^+_{\sigma(i+1)}$ for all $i\in[\numue-1]$.
Then,
$$
\maxdist([\zvec^-,\zvec^+]) \overset{\text{(a)}}{=} \maxdist([\sigma(\zvec^-),\sigma(\zvec^+)]) \overset{\text{(b)}}{=} \max_{i\in[\numue]}\eta(i) - \sum_{i'\in[i]} z^+_{\sigma(i')}.
$$
Here (a) follows from permutational invariance of $\UE$ and
(b) is due to Lemma \ref{lem:symmetric_dist}.
Let $i^\star\in[\numue]$ be the maximal index such that $\maxdist([\zvec^-,\zvec^+]) = \eta(i^\star) - \sum_{i'\in[i^\star]} z^+_{\sigma(i')}$ and let $j^\star\in[\numuef]$ be the maximal index such that 
\begin{align}
     z_{j^\star} & \leq \eta(i^\star) - \eta(i^\star-1) \label{eq:jstarub}\\
     \text{and} \quad z_{j^\star} & \leq z^+_{\sigma(i^\star+1)} \label{eq:jstarub2}.
\end{align}
With this we construct $\hat{\zvec}{}^+$ via
\begin{equation*}
  \hat{z}{}^+_{\sigma(i)} =\begin{cases}
    z_{j^\star} & \case i \leq i^\star \text{ or } z_{j^\star} > \eta(i^\star+1) - \eta(i^\star) \\
    z_{j^\star+1} & \case i > i^\star \text{ and } z_{j^\star} \leq \eta(i^\star+1) - \eta(i^\star),
\end{cases}  
\end{equation*}
we define $\hat{\zvec}{}^- = -\hat{\zvec}{}^+$, and we claim that
\begin{equation}
\label{eq:reduced_symmetric_solution_structure:sigmaoptimal}
\fdist(\altufvec, [\hat{\zvec}{}^-, \hat{\zvec}{}^+]) - \maxdist([\hat{\zvec}{}^-, \hat{\zvec}{}^+])\geq \fdist(\altufvec, [\zvec^-, \zvec^+]) - \maxdist([\zvec^-, \zvec^+]).
\end{equation}

To prove \eqref{eq:reduced_symmetric_solution_structure:sigmaoptimal}, we first simplify $\maxdist([\hat{\zvec}{}^-, \hat{\zvec}{}^+])$ and show that
\begin{equation}
    \label{eq:betaoptforjhat}
    \maxdist([\hat{\zvec}{}^-, \hat{\zvec}{}^+]) = \eta(i^\star) - \sum_{i\in[i^\star]} \hat{z}{}^+_{\sigma(i)} = \eta(i^\star) - i^\star z_{j^\star}.
\end{equation}
By Lemma \ref{lem:symmetric_dist}, there exists an index $i'$ such that $\maxdist([\hat{\zvec}{}^-, \hat{\zvec}{}^+]) = \eta(i') - \sum_{i\in[i']} \hat{z}^+_{\sigma(i)}$. 
We claim that $i' = i^\star$, which immediately implies \eqref{eq:betaoptforjhat}.  

First, consider the case where $i' > i^\star$. 
Then
\begin{align*}
    \eta(i') - \sum_{i\in[i']} \hat{z}^+_{\sigma(i)} 
    &\overset{\text{(a)}}{=} 
    \eta(i^\star) - \sum_{i\in[i^\star]} \hat{z}^+_{\sigma(i)} + 
    \eta(i') - \eta(i^\star) -\sum_{i\in [i']\setminus[i^\star]} \hat{z}^+_{\sigma(i)}
    \\&\overset{\text{(b)}}{\leq}
    \eta(i^\star) - \sum_{i\in[i^\star]} \hat{z}^+_{\sigma(i)} + 
    (i'-i^\star) (\eta(i^\star+1) - \eta(i^\star) - \hat{z}{}^+_{\sigma(i^\star+1)})
    \overset{\text{(c)}}{<}
    \eta(i^\star) - \sum_{i\in[i^\star]} \hat{z}^+_{\sigma(i)},
\end{align*}
where (a) follows from adding a smart zero and splitting the sum,
(b) is due to the non-increasing differences of $\eta$ and the definition of $\hat{\zvec}^+$,
and (c) holds since $\hat{z}{}^+_{\sigma(i^\star+1)}>\eta(i^\star+1)-\eta(i^\star)$, which follows from a case distinction in the definition of $\hat{\zvec}{}^+$. 
By this definition, the claim is immediate for the case $\hat{z}{}^+_{\sigma(i^\star+1)} = z_{j^\star}$. 
For the case $\hat{z}{}^+_{\sigma(i^\star+1)} = z_{j^\star+1}$, we consider another case distinction over \eqref{eq:jstarub} and \eqref{eq:jstarub2}. 
If \eqref{eq:jstarub} is binding, then the claim follows from the non-increasing differences of $\eta$. 
If \eqref{eq:jstarub2} is binding, then the claim follows from $z_{j^\star+1} > z^+_{\sigma(i^\star+1)}$ and 
$
z^+_{\sigma(i^\star+1)} > \eta(i^\star+1)-\eta(i^\star),
$
which holds since $\eta(i^\star+1) - \sum_{i\in[i^\star+1]} z^+_{\sigma(i)} < \eta(i^\star) - \sum_{i\in[i^\star]} z^+_{\sigma(i)}$ by the optimality and maximality of the index $i^\star$. 

Next, consider the case where $i' < i^\star$. 
Then
\begin{align*}
    \eta(i') - \sum_{i\in[i']} \hat{z}^+_{\sigma(i)} 
    &\overset{\text{(a)}}{=} 
    \eta(i^\star) - \sum_{i\in[i^\star]} \hat{z}^+_{\sigma(i)} + 
    \eta(i') - \eta(i^\star) + \sum_{i\in[i^\star]\setminus[i']} \hat{z}^+_{\sigma(i)}
    \\&\overset{\text{(b)}}{\leq}
    \eta(i^\star) - \sum_{i\in[i^\star]} \hat{z}^+_{\sigma(i)} + 
    (i^\star-i') (z_{j^\star} - (\eta(i^\star) - \eta(i^\star-1)))
    \overset{\text{(c)}}{\leq}
    \eta(i^\star) - \sum_{i\in[i^\star]} \hat{z}^+_{\sigma(i)},
\end{align*}
where (a) follows from adding a smart zero and splitting the sum,
(b) is due to the non-increasing differences of $\eta$ and the definition of $\hat{\zvec}^+$, and
(c) follows from (\ref{eq:jstarub}). 

To conclude our proof of \eqref{eq:reduced_symmetric_solution_structure:sigmaoptimal}, we show the following relational property between $\hat{\zvec}{}^+$ and $\zvec^+$:
\begin{align}
    z^+_{\sigma(i)} &\leq \hat{z}^+_{\sigma(i)} & \case i \leq i^\star \label{eq:leqk}\\
    \text{and} \quad z^+_{\sigma(i)} &\geq \hat{z}^+_{\sigma(i)} & \case i \geq i^\star+1. \label{eq:geqkp1}
\end{align}
For (\ref{eq:leqk}) it is sufficient to show that $z^+_{\sigma(i^\star)} \leq z_{j^\star}$, as $z^+_{\sigma(i)} \leq z^+_{\sigma(i^\star)}$ for all $i\leq i^\star$ by definition of $\sigma$ and $\hat{z}^+_{\sigma(i)} = z_{j^\star}$ for all $i\leq i^\star$ by definition of $\hat{\zvec}{}^+$, respectively. 
Consider the case where (\ref{eq:jstarub}) is the binding constraint for $j^\star$.
Then by the maximality of $i^\star$, we find
$
\eta(i^\star) - \sum_{i\in[i^\star]} z^+_{\sigma(i)} \geq \eta(i^\star-1) - \sum_{i\in[i^\star-1]} z^+_{\sigma(i)},
$ 
which implies that
$
z^+_{\sigma(i^\star)} \leq \eta(i^\star) - \eta(i^\star-1).
$
This implies (\ref{eq:leqk}) as $j^\star$ is the maximal index such that $z_{j^\star} \leq \eta(i^\star) - \eta(i^\star-1)$. 
Now consider the case where (\ref{eq:jstarub2}) is the binding constraint for $j^\star$. 
Then 
$
z^+_{\sigma(i^\star)} \leq z^+_{\sigma(i^\star+1)} = z_{j^\star} 
$
by the definition of $\sigma$. 

For (\ref{eq:geqkp1}) it is sufficient to show $z^+_{\sigma(i^\star+1)} \geq \hat{z}{}^+_{\sigma(i^\star+1)}$,  
as $z^+_{\sigma(i)} \geq z^+_{\sigma(i^\star+1)}$ for all $i\geq i^\star+1$ by definition of $\sigma$ and $\hat{z}^+_{\sigma(i)} = \hat{z}^+_{\sigma(i^\star+1)}$ for all $i\geq i^\star+1$ by definition of $\hat{\zvec}{}^+$, respectively. 
Consider the case $z_{j^\star} > \eta(i^\star+1) - \eta(i^\star)$.
Then 
$
\hat{z}^+_{\sigma(i^\star+1)} = z_{j^\star} \leq z^+_{\sigma(i^\star+1)}
$
by (\ref{eq:jstarub2}). 
Now consider the case $z_{j^\star} \leq \eta(i^\star+1) - \eta(i^\star)$. 
Then $\hat{z}^+_{\sigma(i^\star+1)} = z_{j^\star+1}$. 
Further, we have $z^+_{\sigma(i^\star+1)} > \eta(i^\star+1) - \eta(i^\star) \geq z_{j^\star}$ by optimality and maximality of $i^\star$. 
Thus $z^+_{\sigma(i^\star+1)} \geq z_{j^\star + 1} = \hat{z}^+_{\sigma(i^\star+1)}$.

Next, we find that
\begin{align*}
    &\fdist(\altufvec, [\hat{\zvec}{}^-, \hat{\zvec}{}^+]) - \maxdist([\hat{\zvec}{}^-, \hat{\zvec}{}^+]) \\
    \overset{\text{(a)}}{=} & 
    \sum_{i\in[\numue]} \left(
    \sum\limits_{j \in\hat{\numuef}{}^+_i} \altufscal_{ij} 
    + \hat{z}^-_i - \underline{\uescal}_i - \sum\limits_{j\in\hat{\numuef}{}^-_i} \altufscal_{ij} \right) - \eta(i^\star) + \sum_{i\in[i^\star]} \hat{z}{}^+_{\sigma(i)}
    \\\overset{\text{(b)}}{=}&
    \sum_{i\in[\numue]} \left(
    \sum\limits_{j \in\numuef^+_i} \altufscal_{ij} 
    + z^-_i - \underline{\uescal}_i - \sum\limits_{j\in\numuef^-_i} \altufscal_{ij} \right) - \eta(i^\star) + \sum_{i\in[i^\star]} \hat{z}{}^+_{\sigma(i)}
    \displaybreak[0]\\& +\sum_{i\in[\numue]\setminus[i^\star]} 
    \left(
    \sum\limits_{j \in\hat{\numuef}{}^+_{\sigma(i)}\setminus\numuef^+_{\sigma(i)}} \altufscal_{\sigma(i) j} 
    + \hat{z}{}^-_{\sigma(i)} - z^-_{\sigma(i)} - \sum\limits_{j \in\hat{\numuef}{}^-_{\sigma(i)}\setminus\numuef^-_{\sigma(i)}} \altufscal_{\sigma(i) j} \right)
    \\& -\sum_{i\in[i^\star]} \left(
    \sum\limits_{j \in\numuef^+_{\sigma(i)}\setminus\hat{\numuef}{}^+_{\sigma(i)}} \altufscal_{\sigma(i) j} 
    - \hat{z}{}^-_{\sigma(i)} + z^-_{\sigma(i)} - \sum\limits_{j \in\numuef^-_{\sigma(i)}\setminus \hat{\numuef}{}^-_{\sigma(i)}} \altufscal_{\sigma(i) j} \right)
    \displaybreak[0]\\\overset{\text{(c)}}{=}&
    \sum_{i\in[\numue]} \left(
    \sum\limits_{j \in\numuef^+_i} \altufscal_{ij} 
    + z^-_i - \underline{\uescal}_i - \sum\limits_{j\in\numuef^-_i} \altufscal_{ij} \right) - \eta(i^\star) + \sum_{i\in[i^\star]} z^+_{\sigma(i)}
    \\& +\sum_{i\in[\numue]\setminus[i^\star]} 
    \left(
    \sum\limits_{j \in\hat{\numuef}{}^+_{\sigma(i)}\setminus\numuef^+_{\sigma(i)}} \altufscal_{\sigma(i) j} 
    + \hat{z}{}^-_{\sigma(i)} - z^-_{\sigma(i)} - \sum\limits_{j \in\hat{\numuef}{}^-_{\sigma(i)}\setminus\numuef^-_{\sigma(i)}} \altufscal_{\sigma(i) j} \right)
    \\& -\sum_{i\in[i^\star]} \left(
    \sum\limits_{j \in\numuef^+_{\sigma(i)}\setminus\hat{\numuef}{}^+_{\sigma(i)}} \altufscal_{\sigma(i) j} 
    - \sum\limits_{j \in\numuef^-_{\sigma(i)}\setminus \hat{\numuef}{}^-_{\sigma(i)}} \altufscal_{\sigma(i) j} \right)
    \displaybreak[0]\\\overset{\text{(d)}}{\geq} &
    \sum_{i\in[\numue]} \left(
    \sum\limits_{j \in\numuef^+_i} \altufscal_{ij} 
    + z^-_i - \underline{\uescal}_i - \sum\limits_{j\in\numuef^-_i} \altufscal_{ij} \right) - \eta(i^\star) + \sum_{i\in[i^\star]} z^+_{\sigma(i)} 
    \\=& \fdist(\altufvec, [\zvec^-, \zvec^+]) - \maxdist([\zvec^-, \zvec^+]),
\end{align*}
where $\hat{\numuef}{}^+_i = \{j\in[\numuef_i] \colon \hat{z}{}^+_i < z_{ij}\}$ and $\hat{\numuef}{}^-_i = \{j\in[\numuef_i] \colon \hat{z}{}^-_i \geq z_{ij}\}$ analogous to $\numuef^+_i$ and $\numuef^-_i$. 
Here, (a) is due to (\ref{eq:betaoptforjhat}) and the definition of $\fdist$,
(b) follows from (\ref{eq:leqk}) and (\ref{eq:geqkp1}) as well as separating the sum,
(c) follows from $\zvec^+ = -\zvec^-$ and $\hat{\zvec}^+ = -\hat{\zvec}^-$ and combining the sums over $[i^\star]$ in the first and last term, and
(d) holds since the second row is non-negative by $\hat{z}{}^-_{\sigma(i)} - z^-_{\sigma(i)} = \sum_{j \in\hat{\numuef}{}^-_{\sigma(i)}\setminus\numuef^-_{\sigma(i)}} z_{j+1}-z_j$ and $0\leq \altufscal_{ij}\leq z_{j} - z_{j-1}$ for all $i\in[\numue],j\in[\numuef_i]$, and the third row is non-negative by $\altufscal_{ij}\geq \altufscal_{i,\numuef-j}$ $i\in[\numue],j\in[\numuef/2]$, and $\numuef^+_i = \{\numuef-j\colon j\in\numuef^-_i\}$ and $\hat{\numuef}{}^+_i = \{\numuef-j\colon j\in\hat{\numuef}{}^-_i\}$ by $\zvec^+ = -\zvec^-$ and $\hat{\zvec}^+ = -\hat{\zvec}^-$, respectively. 
Thus $[\hat{\zvec}{}^-, \hat{\zvec}{}^+]$ is an optimal solution to \eqref{eq:subproblem}.

To conclude the proof, we show that there is an optimal solution $[\overline{\zvec}{}^-, \overline{\zvec}{}^+]$ following the ordering $\pi$ of $\{\delta_{ij^\star}\}_{i\in[\numue]}$. 
Define $\overline{\zvec}{}^+$ via 
$\overline{z}{}^+_{\pi(i)} = \hat{z}{}^+_{\sigma(i)}$, $i \in[\numue]$, and $\overline{\zvec}{}^-=-\overline{\zvec}{}^+$.
If $z_{j^\star} > \eta(i^\star+1) - \eta(i^\star)$, then we have $\overline{z}{}^+_{\pi(i)} = \hat{z}{}^+_{\sigma(i)}=z_{j^\star}$ for all $i \in[\numue]$, which implies that $\overline{\zvec}{}^+=\hat{\zvec}{}^+$, making $[\overline{\zvec}{}^-, \overline{\zvec}{}^+]$ an optimal solution to \eqref{eq:subproblem}.
Now consider the case where $z_{j^\star} \leq \eta(i^\star+1) - \eta(i^\star)$.
As $\maxdist$ is permutation invariant, $\maxdist([\overline{\zvec}{}^-, \overline{\zvec}{}^+]) = \maxdist([\hat{\zvec}{}^-, \hat{\zvec}{}^+])$.
For $\fdist$, we find that
\begin{align*}
    \fdist(\altufvec, [\overline{\zvec}{}^-, \overline{\zvec}{}^+]) 
    =&
    \sum_{i\in[\numue]} \left(
    \sum\limits_{j \in \overline{\numuef}{}^+_i} \altufscal_{ij} 
    + \overline{z}{}^-_i - \underline{\uescal}_i - \sum\limits_{j \in \overline{\numuef}{}^-_i} \altufscal_{ij} \right)
    \\\overset{\text{(a)}}{=}&
    \sum_{i\in[\numue]} \left(
    \sum\limits_{j \in [\numuef]\setminus[j^\star+1]} \altufscal_{ij} 
    - z_{j^\star+1} - \underline{\uescal}_i - \sum\limits_{j \in [\numuef-(j^\star+1)]} \altufscal_{ij} \right)
    \\&+ 
    \sum_{i\in[i^\star]} \left( \altufscal_{\pi(i), j^\star+1} - z_{j^\star} + z_{j^\star+1}- \altufscal_{\pi(i), \numuef-j^\star} \right)
    \\\overset{\text{(b)}}{=}&
    \sum_{i\in[\numue]} \left(
    \sum\limits_{j \in [\numuef]\setminus[j^\star+1]} \altufscal_{ij} 
    - z_{j^\star+1} - \underline{\uescal}_i - \sum\limits_{j \in [\numuef-(j^\star+1)]} \altufscal_{ij} \right)
    + \sum_{i\in[i^\star]} \delta_{\pi(i) j^\star}
    \\\overset{\text{(c)}}{\geq}&
    \sum_{i\in[\numue]} \left(
    \sum\limits_{j \in [\numuef]\setminus[j^\star+1]} \altufscal_{ij} 
    - z_{j^\star+1} - \underline{\uescal}_i - \sum\limits_{j \in [\numuef-(j^\star+1)]} \altufscal_{ij} \right)
    + \sum_{i\in[i^\star]} \delta_{\sigma(i) j^\star}
    \\\overset{\text{(d)}}{=}& \fdist(\altufvec, [\hat{\zvec}{}^-, \hat{\zvec}{}^+]).
\end{align*}
Here, (a) follows from splitting the sums and the definition of $[\overline{\zvec}{}^-, \overline{\zvec}{}^+]$,
(b) is due to the definition of $\delta_{ij^\star}$,
(c) holds since $\pi$ is a non-ascending ordering of $\{\delta_{ij^\star}\}_{i\in[\numue]}$, 
and (d) follows from combining the arguments from (a) and (b). 
Thus, $[\overline{\zvec}{}^-, \overline{\zvec}{}^+]$ is an optimal solution to \eqref{eq:subproblem} that satisfies all properties claimed by the lemma.
\end{proof}

Lemma~\ref{lem:reduced_symmetric_solution_structure} provides us with the key structural property to efficiently find an optimal solution for the separation problem, as it guarantees that one of the $\frac{\numue \numuef}{2}$ solution candidates with the properties specified by the lemma is optimal. 
Using Algorithm~\ref{alg:symmetric}, we can efficiently enumerate all of these candidates.

\begin{algorithm}[h]
\caption{Permutation invariant cut separation \label{alg:symmetric}}
\DontPrintSemicolon
\KwData{ A scaled incumbent solution $\altufvec = \ufvec / s$ to problem $\aff \lproblem(\PUF, \UFO)$}
\KwResult{An optimal solution $[\zvec^-, \zvec^+]$ for the separation problem}
    $o^\star = o = 0$ \;
  \For{$j = \numuef-1$ \KwTo $ \frac{\numuef}{2} $}{
    Set $\delta_{ij}= z_{j+1}- z_{j} + \altufscal_{i,j+1} - \altufscal_{i,\numuef - j}$ for all $i\in[\numue]$\;
    Let $\pi$ be a non-ascending order of the indices of $\{\delta_{ij}\}_{i\in[\numue]}$ \;
    \For{$i = 1$ \KwTo $\numue$}{
    $o = o + \delta_{\pi(i) j} - \begin{cases}
        z_{j+1} - z_j & \case \eta(i) - \eta(i-1) \geq z_{j+1}\\
        \eta(i) - \eta(i-1) - z_j &\case z_{j+1} > \eta(i) - \eta(i-1) \geq z_{j} \\
        0 & \text{otherwise}
    \end{cases}$ \label{alg:update}\;
    \If{$o > o^\star$}{
    $j^\star=j, i^\star=i, \pi^\star = \pi, o^\star = o$ \;
    }
    }
    }
    \Return $[\zvec^-,\zvec^+]$ with $z^+_{\pi^\star(i)} = \begin{cases}
        z_{j^\star} &\case i\leq i^\star\\
        z_{j^\star+1} &\case i > i^\star
    \end{cases}\quad\forall i\in[\numue]$ and $\zvec^- = -\zvec^+$\;
\end{algorithm}

\begin{lemma}
    \label{lem:symmetric_alg}
    Let $Z$ satisfy Assumption~\ref{ass:symmetric_Z} and let $\UE$ satisfy Assumption~\ref{ass:symmetric_concave}. 
    Then, Algorithm~\ref{alg:symmetric} determines an optimal solution for the separation problem in time $\mathcal{O}(\numue\numuef \log(\numue))$.
\end{lemma}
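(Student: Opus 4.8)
The plan is to prove correctness by showing that the accumulator $o$ in Algorithm~\ref{alg:symmetric}, read immediately after the update on line~\ref{alg:update} for inner index $i$ within outer iteration $j$, equals the separation objective $\Omega(i,j) := \fdist(\altufvec,[\zvec^-,\zvec^+]) - \maxdist([\zvec^-,\zvec^+])$ of the candidate box defined by $\zvec^-=-\zvec^+$, $z^+_{\pi(i')}=z_j$ for $i'\le i$ and $z^+_{\pi(i')}=z_{j+1}$ for $i'>i$, where $\pi$ is the non-ascending ordering of $\{\delta_{i'j}\}_{i'\in[\numue]}$ used by the algorithm. By Lemma~\ref{lem:reduced_symmetric_solution_structure} some candidate of exactly this form is optimal, and the two nested loops enumerate all of them for $j\in\{\numuef/2,\dots,\numuef-1\}$ and $i\in\{1,\dots,\numue\}$; the remaining split points $i=0$ coincide with candidates examined in the adjacent outer iteration, as shown below. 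Hence it suffices to establish $o=\Omega(i,j)$ throughout, which immediately yields that $o^\star$ is the optimal separation value and $(i^\star,j^\star,\pi^\star)$ an optimal box.

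The core step is to match the per-step increment of $o$ with the increment of $\Omega$. For $\fdist$, reusing the computation at the end of the proof of Lemma~\ref{lem:reduced_symmetric_solution_structure} with $(i,j)$ in place of $(i^\star,j^\star)$ gives $\fdist(\altufvec,[\zvec^-,\zvec^+]) = C(j) + \sum_{i'\in[i]}\delta_{\pi(i'),j}$ for a constant $C(j)$ independent of $i$, so the $\fdist$-increment from $i-1$ to $i$ is exactly $\delta_{\pi(i),j}$. For $\maxdist$, the permutation invariance of $\UE$ lets us sort $\zvec^+$ non-descending (pairing the $i$ smaller values $z_j$ with the leading indices) and invoke Lemma~\ref{lem:symmetric_dist}, which with $\eta(0)=0$ gives $\maxdist([\zvec^-,\zvec^+]) = \max_{\ell\in[\numue]}\sum_{\ell'\in[\ell]}\bigl(\eta(\ell')-\eta(\ell'-1)-c_{\ell'}\bigr)$, where $c_{\ell'}=z_j$ for $\ell'\le i$ and $c_{\ell'}=z_{j+1}$ otherwise. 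Since $\eta$ has non-increasing differences (Assumption~\ref{ass:symmetric_concave}\ref{ass:concave}) and the sorted $c_{\ell'}$ are non-decreasing, the summand is non-increasing in $\ell'$, so the prefix-sum maximum equals the separable expression $\sum_{\ell'\in[\numue]}(\eta(\ell')-\eta(\ell'-1)-c_{\ell'})_+$. Passing from candidate $(i-1,j)$ to $(i,j)$ flips only coordinate $\ell'=i$ from $z_{j+1}$ to $z_j$, so the $\maxdist$-increment is $(\eta(i)-\eta(i-1)-z_j)_+ - (\eta(i)-\eta(i-1)-z_{j+1})_+$, which a three-case check shows equals precisely the quantity subtracted on line~\ref{alg:update}. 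Therefore $\Omega(i,j)-\Omega(i-1,j)$ equals $\delta_{\pi(i),j}$ minus that subtrahend, matching the update of $o$.

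It remains to handle the fact that $o$ is never reset between outer iterations, which I will do through two boundary identities that make the running sum telescope. First, candidate $(\numue,j)$ (all coordinates at $z_j$) and candidate $(0,j-1)$ (also all coordinates at $z_j$) are the same square $\zvec^+=z_j\unitvec$, $\zvec^-=-z_j\unitvec$, so $\Omega(\numue,j)=\Omega(0,j-1)$; thus the value of $o$ left at the end of outer iteration $j$ is exactly the seed $\Omega(0,j-1)$ needed to start iteration $j-1$. Second, the initialization $o=0$ is the correct seed $\Omega(0,\numuef-1)$: there $\zvec^+=z_{\numuef}\unitvec=\overline{\uescal}\unitvec$ and $\zvec^-=\underline{\uescal}\unitvec$, i.e.\ the box is the tight bounding box $\EBOX\supseteq\UE$, so $\maxdist=0$, and since $\numuef^+_i=\numuef^-_i=\emptyset$ for every $i$ we also get $\fdist(\altufvec,[\zvec^-,\zvec^+])=0$, giving $\Omega(0,\numuef-1)=0$. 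A downward induction on $j$, combined with the inner increment established above, then yields $o=\Omega(i,j)$ after every update, which together with Lemma~\ref{lem:reduced_symmetric_solution_structure} establishes correctness.

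Finally, for the runtime the $\numue$ differences $\eta(i)-\eta(i-1)$ are precomputed once in $\mathcal{O}(\numue)$; the outer loop runs $\numuef/2$ times, and each iteration computes $\{\delta_{ij}\}_i$ in $\mathcal{O}(\numue)$, sorts them to obtain $\pi$ in $\mathcal{O}(\numue\log\numue)$, and performs $\numue$ constant-time updates, for $\mathcal{O}(\numue\log\numue)$ per iteration and $\mathcal{O}(\numue\numuef\log\numue)$ in total. The main obstacle is the middle step: converting the prefix-sum maximum of Lemma~\ref{lem:symmetric_dist} into a separable sum of positive parts via the non-increasing-differences assumption, and then verifying that the non-reset accumulator telescopes correctly through the two boundary identities.
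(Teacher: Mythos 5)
Your proof is correct and follows the paper's overall skeleton: you maintain the invariant that $o$ equals the objective $\fdist(\altufvec,[\zvec^-,\zvec^+])-\maxdist([\zvec^-,\zvec^+])$ of the candidate box indexed by $(i,j,\pi)$, you invoke Lemma~\ref{lem:reduced_symmetric_solution_structure} to certify that this candidate family contains an optimum, you obtain the $\fdist$-increment $\delta_{\pi(i)j}$ by the same splitting-of-sums calculation, and you handle the never-reset accumulator by noting that the all-$z_j$ candidate closing outer iteration $j$ coincides with the candidate seeding iteration $j-1$. (Your boundary indexing is in fact the correct one—the paper's ``$\fdist_{\numue j}=\fdist_{0,j+1}$'' has its indices transposed—and your explicit check that the initialization $o=0$ equals the objective of the bounding-box candidate is a detail the paper leaves implicit.) The genuine difference lies in how you prove the $\maxdist$-recursion, the paper's~\eqref{eq:reduced_symmetric_solution_structure:maxdist}, which constitutes the bulk of the paper's argument: the paper tracks the maximal indices $i^\star$ and $i^-$ attaining the prefix maximum of Lemma~\ref{lem:symmetric_dist} for the candidates $(i,j)$ and $(i-1,j)$, derives the implications \eqref{eq:kstarlbweak}--\eqref{eq:kstarubweak}, and concludes with a three-case analysis. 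You instead apply property~\ref{ass:concave} of Assumption~\ref{ass:symmetric_concave} once, globally: with the coordinates sorted so that $c_\ell$ is non-decreasing, the summands $\eta(\ell)-\eta(\ell-1)-c_\ell$ are non-increasing in $\ell$, so the prefix maximum collapses to the separable form $\sum_{\ell\in[\numue]}\bigl(\eta(\ell)-\eta(\ell-1)-c_\ell\bigr)_+$, from which the per-coordinate increment—and its agreement with line~\ref{alg:update}—follows by an immediate case check. Your route is shorter and makes transparent why non-increasing differences is exactly the right assumption; the paper's index-tracking route is more laborious but never relies on the positive-part identity and so sidesteps the edge case noted below.

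One small hole to patch: the identity ``nonempty-prefix maximum equals sum of positive parts'' for a non-increasing sequence fails when every summand is negative (the left-hand side then equals $\eta(1)-c_1<0$ while the right-hand side is $0$, and Lemma~\ref{lem:symmetric_dist} takes the maximum over nonempty prefixes only). You should therefore state that the first summand is non-negative: since $c_1\in\{z_j,z_{j+1}\}$ with $j+1\leq\numuef$, we have $c_1\leq z_{\numuef}=\overline{\uescal}=\eta(1)$, where the last equality uses the sign symmetry of $\UE$ together with the tight-bounding-box convention adopted at the start of the appendix proofs of Theorem~\ref{thm:symmetric}. With that one line added, your correctness argument is complete, and your runtime accounting matches the paper's.
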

\begin{proof}[Proof of Lemma \ref{lem:symmetric_alg}]
For the runtime complexity note that each operation in the inner loop of Algorithm~\ref{alg:symmetric} can be executed in $\mathcal{O}(1)$, and thus the entire inner loop runs in time $\mathcal{O}(\numue)$. 
Sorting the values $\{\delta_{ij}\}_{i\in[\numue]}$ for a single index $j$ takes time $\mathcal{O}(\numue\log(\numue))$ using standard sorting algorithms such as merge sort. 
Thus, the entire runtime of Algorithm~\ref{alg:symmetric} is bounded by $\mathcal{O}(\numue\numuef\log(\numue))$.

To prove the optimality of the algorithm, we show that it iterates through all solutions of the form introduced by Lemma~\ref{lem:reduced_symmetric_solution_structure}.
In particular, we show that $o$ maintains the objective value of the solution $[\zvec^-, \zvec^+]$ induced by $i$, $j$ and $\pi$ via the construction in Lemma~\ref{lem:reduced_symmetric_solution_structure}, that is,  $o=\fdist(\altufvec, [\zvec^-, \zvec^+]) - \maxdist(\altufvec, [\zvec^-, \zvec^+])$. 
Throughout the remainder of the proof, we use the abbreviations $\fdist_{ij}=\fdist(\altufvec, [\zvec^-, \zvec^+])$ and $\maxdist_{ij}=\maxdist(\altufvec, [\zvec^-, \zvec^+])$. 
Note that we omit the explicit dependence of $[\zvec^-, \zvec^+]$ on $i$ and $j$ as well as the dependence of $\pi$ on $j$. 
We prove the result of the lemma by showing that
\begin{align}
    &\fdist_{ij} = \fdist_{i-1,j}+\delta_{\pi(i)j} &&\forall i\in[\numue], j\in[\numuef]\label{eq:reduced_symmetric_solution_structure:fdist}\\
    \text{and } &\maxdist_{ij} = \maxdist_{i-1,j} + \begin{cases}
        z_{j+1} - z_j & \case \eta(i) - \eta(i-1) \geq z_{j+1}\\
        \eta(i) - \eta(i-1) - z_j &\case z_{j+1} > \eta(i) - \eta(i-1) \geq z_{j} \\
        0 & \otherwise
    \end{cases} && \forall i\in[\numue], j\in[\numuef].\label{eq:reduced_symmetric_solution_structure:maxdist}
\end{align}
By construction, we have $\fdist_{\numue j}=\fdist_{0,j+1}$ and $\maxdist_{\numue j}=\maxdist_{0,j+1}$. 
Thus, combining \eqref{eq:reduced_symmetric_solution_structure:fdist} and \eqref{eq:reduced_symmetric_solution_structure:maxdist} immediately implies the claim. 

Equation \eqref{eq:reduced_symmetric_solution_structure:fdist} follows from the construction of $[\zvec^-, \zvec^+]$ in Lemma~\ref{lem:reduced_symmetric_solution_structure}. 
Specifically, we find
\begin{align*}
    \fdist_{ij} \overset{\text{(a)}}{=}&
    \sum_{i'\in[i]}\left(\sum_{j'\in [\numuef]\setminus[j]}\altufscal_{\pi(i')j'}-z_j-\underline{\uescal}_{i'}-\sum_{j'\in[\numuef-j]}\altufscal_{\pi(i')j'} \right) 
    \\&+ 
    \sum_{i'\in[\numue]\setminus[i]}\left(\sum_{j'\in [\numuef]\setminus[j+1]}\altufscal_{\pi(i')j'}-z_{j+1}-\underline{\uescal}_{i'}-\sum_{j'\in[\numuef-(j+1)]}\altufscal_{\pi(i')j'} \right)
    \\\overset{\text{(b)}}{=}&
    \sum_{i'\in[i-1]}\left(\sum_{j'\in [\numuef]\setminus[j]}\altufscal_{\pi(i')j'}-z_j-\underline{\uescal}_{i'}-\sum_{j'\in[\numuef-j]}\altufscal_{\pi(i')j'} \right) 
    \\&+ 
    \sum_{i'\in[\numue]\setminus[i-1]}\left(\sum_{j'\in [\numuef]\setminus[j+1]}\altufscal_{\pi(i')j'}-z_{j+1}-\underline{\uescal}_{i'}-\sum_{j'\in[\numuef-(j+1)]}\altufscal_{\pi(i')j'} \right) 
    \\&+
    (\altufscal_{\pi(i), j+1} + z_{j+1} - z_j - \altufscal_{\pi(i), \numuef-j})
    \overset{\text{(c)}}{=}\fdist_{i-1,j}+\delta_{\pi(i)j},
\end{align*}
where (a) and (c) follow from the construction of $[\zvec^-, \zvec^+]$ and the definition of $\delta_{ij}$ and (b) follows from shifting the sums.

To show equation \eqref{eq:reduced_symmetric_solution_structure:maxdist}, we first simplify the expression for $\maxdist_{ij}$ and prove the claim via a case distinction. 
Recall that Lemma~\ref{lem:symmetric_dist} implies the existence of an index $i^\star$ for each $i\in[\numue]$ and $j\in[\numuef], j\geq\frac{\numuef}{2}$, such that
\begin{equation}
\maxdist_{ij} = \eta(i^\star)-\sum_{i'\in[i^\star]} z^+_{i'} \overset{\text{(a)}}{=} \eta(i^\star)- i^\star z_{j+1} + \min\{i, i^\star\} (z_{j+1} - z_j), \label{eq:reduced_symmetric_solution_structure:compact_maxdist}
\end{equation}
where (a) follows from the construction of $\zvec^+$. 
Let $i^\star\in[\numue]$ be the maximal index satisfying~\eqref{eq:reduced_symmetric_solution_structure:compact_maxdist} and let $i^-\in [\numue]$ be the maximal index such that 
$
\maxdist_{i-1,j} = \eta(i^-) - i^- z_{j+1} + \min\{i-1,i^-\} (z_{j+1} - z_{j}).
$ 
By construction of $i^\star$, we have
\begin{align*}
	&\eta(i)-\eta(i-1) \overset{\text{(a)}}{\leq} \eta(i^\star+1) - \eta(i^\star) \overset{\text{(b)}}{<} z_j && \case i^\star < i \\
	&\eta(i+1)-\eta(i) \overset{\text{(a)}}{\leq} \eta(i^\star+1) - \eta(i^\star) \overset{\text{(b)}}{<} z_{j+1} && \case i^\star \leq i \\
	&\eta(i)-\eta(i-1) \overset{\text{(a)}}{\geq} \eta(i^\star) - \eta(i^\star-1) \overset{\text{(c)}}{\geq} z_j && \case i^\star \geq i \\
	&\eta(i+1)-\eta(i) \overset{\text{(a)}}{\geq} \eta(i^\star) - \eta(i^\star-1) \overset{\text{(c)}}{\geq} z_{j+1} && \case i^\star > i,
\end{align*}
where the inequalities (a) are due to the non-increasing differences of $\eta$, the inequalities (b) follow from $i^\star$ being a maximal index satisfying \eqref{eq:reduced_symmetric_solution_structure:compact_maxdist}, and the inequalities (c) are due to the optimality of $i^\star$, that is, $\eta(i^\star)-\sum_{i'\in[i^\star]} z^+_{i'} =  \max_{\hat{i}\in[\numue]}(\eta(\hat{i})-\sum_{i'\in[\hat{i}]} z^+_{i'})$ by Lemma~\ref{lem:symmetric_dist}. 
Negating the above implications yields
\begin{subequations}
\begin{align}
	\hspace{4cm}
    \eta(i) - \eta(i-1) & \geq z_j &\Rightarrow&&&  i^\star \geq i \label{eq:kstarlbweak} \hspace{4cm}\\
    \eta(i+1) - \eta(i) & \geq z_{j+1} &\Rightarrow&&&  i^\star > i \label{eq:kstarlbstrong}\\
    \eta(i) - \eta(i-1) & <  z_j \hspace{-.5cm}&\Rightarrow&&&  i^\star < i \label{eq:kstarubstrong}\\
    \eta(i+1) - \eta(i) & <  z_{j+1} &\Rightarrow&&&  i^\star \leq i. \label{eq:kstarubweak}
\end{align}
\end{subequations}
Using the same argument, we can show that the implications \eqref{eq:kstarlbweak}--\eqref{eq:kstarubweak} hold for $i^-$ when substituting $i$ by $i-1$. 
We next consider each of the three cases in the case distinction of~\eqref{eq:reduced_symmetric_solution_structure:maxdist} separately.

\emph{Case 1.} Let $\eta(i) - \eta(i-1) \geq z_{j+1}$. 
We find that
\begin{align*}
    \maxdist_{ij} &= \eta(i^\star) - i^\star z_{j+1} + \min\{i,i^\star\} (z_{j+1} - z_{j}) 
    \\&\overset{(a)}{=} \eta(i^\star) - i^\star z_{j+1} + i (z_{j+1} - z_{j})
    \\&\overset{(b)}{=} \eta(i^-) - i^- z_{j+1} + (i-1) (z_{j+1} - z_{j}) + (z_{j+1} - z_{j})
    \\&\overset{(c)}{=} \eta(i^-) - i^- z_{j+1} + \min\{i-1,i^-\} (z_{j+1} - z_{j}) + (z_{j+1} - z_{j})
    = \maxdist_{j, i-1} + (z_{j+1} - z_{j}),
\end{align*}
where (a) and (c) follow from $i^\star\geq i$ by (\ref{eq:kstarlbweak}) and $i^-\geq i$ by (\ref{eq:kstarlbstrong}), respectively. 
For (b), we use
\begin{align*}
    \eta(i^-) - i^- z_{j+1} + (i-1) (z_{j+1} - z_{j}) &\geq \eta(i^\star) - i^\star z_{j+1} + (i-1) (z_{j+1} - z_{j}) \\
    \intertext{and}
    \eta(i^\star) - i^\star z_{j+1} + i (z_{j+1} - z_{j}) &\geq \eta(i^-) - i^- z_{j+1} + i (z_{j+1} - z_{j})
\end{align*}
that follow from $i^\star, i^-\geq i$ and the optimality of $i^-$ and $i^\star$, respectively.

\emph{Case 2.} Let $z_{j+1} > \eta(i) - \eta(i-1) \geq z_{j}$.
Then $i^\star\leq i$ by (\ref{eq:kstarubweak}) and the non-increasing differences of $\eta$,
$i^-\leq i-1$ by (\ref{eq:kstarubweak}),
$i^\star\geq i$ by (\ref{eq:kstarlbweak}), and
$i^-\geq i-1$ by (\ref{eq:kstarlbweak}) and the non-increasing differences of $\eta$.
We thus have $i^\star = i$ and $i^-=i-1$. 
With this, we find that
\begin{align*}
    \maxdist_{ij} &= \eta(i^\star) - i^\star z_{j+1} + \min\{i,i^\star\} (z_{j+1} - z_{j}) 
    \\&\overset{(a)}{=} \eta(i) - i z_{j+1} + i (z_{j+1} - z_{j})
    \\&\overset{(b)}{=} \eta(i-1) - (i-1) z_{j+1} + (i-1) (z_{j+1} - z_{j}) 
    + \eta(i) - \eta(i-1) - z_{j}
    \\&\overset{(c)}{=} \eta(i^-) - i^- z_{j+1} + \min\{i-1,i^-\} (z_{j+1} - z_{j})
    + \eta(i) - \eta(i-1) - z_{j}
    \\&= \maxdist_{j, i-1} + \eta(i) - \eta(i-1) - z_{j},
\end{align*}
where (a) and (c) follow from $i^\star= i$ and $i^-= i-1$, respectively, 
and (b) follows from adding $\eta(i-1) - \eta(i-1)$.

\emph{Case 3.} Let $\eta(i) - \eta(i-1) < z_{j}$.
We find that
\begin{align*}
    \maxdist_{ij} &= \eta(i^\star) - i^\star z_{j+1} + \min\{i,i^\star\} (z_{j+1} - z_{j}) 
    \\&\overset{(a)}{=} \eta(i^\star) - i^\star z_{j+1} + i^\star (z_{j+1} - z_{j})
    \\&\overset{(b)}{=} \eta(i^-) - i^- z_{j+1} + i^- (z_{j+1} - z_{j}) 
    \\&\overset{(c)}{=} \eta(i^-) - i^- z_{j+1} + \min\{i-1,i^-\} (z_{j+1} - z_{j})
    = \maxdist_{j, i-1},
\end{align*}
where (a) and (c) follow from $i^\star\leq i-1$ by (\ref{eq:kstarubstrong}) and $i^-\leq i-1$ by (\ref{eq:kstarubweak}), respectively. 
For (b), we use
\begin{align*}
    \eta(i^-) - i^- z_{j+1} + i^- (z_{j+1} - z_{j}) &\geq \eta(i^\star) - i^\star z_{j+1} + i^\star (z_{j+1} - z_{j}) \\
    \intertext{and}
    \eta(i^\star) - i^\star z_{j+1} + i^\star (z_{j+1} - z_{j}) &\geq \eta(i^-) - i^- z_{j+1} + i^- (z_{j+1} - z_{j})
\end{align*}
that follow from $i^\star, i^-\leq i-1$ and the optimality of $i^-$ and $i^\star$, respectively.
\end{proof}

\begin{lemma}
\label{lem:symmetric_full}
Let the Assumptions~\ref{ass:symmetric_Z} and~\ref{ass:symmetric_concave} both hold.
If for every $j \in [\numuef]$ there is at most one $i \in [\numue]$ such that $\eta(i) - \eta(i-1) \in [z_{j-1}, z_j)$, then the separation problem~\eqref{eq:subproblem} has a square solution $[z_j \bm{e}, -z_j \bm{e}]$ for some $j \in [\numuef / 2]$.
\end{lemma}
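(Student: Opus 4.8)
The plan is to combine Lemma~\ref{lem:reduced_symmetric_solution_structure}, which reduces the search to two-level ``step'' candidates, with the incremental objective formulas of Lemma~\ref{lem:symmetric_alg}, and to show that at the optimal level the objective is maximized at one of the two \emph{extreme} steps, which are precisely squares. Concretely, let $(i^\star, j^\star)$ be the optimal index pair furnished by Lemma~\ref{lem:reduced_symmetric_solution_structure}, fix the level $j = j^\star \geq \numuef/2$, and for $i \in \{0, \dots, \numue\}$ write $o(i)$ for the separation objective $\fdist - \maxdist$ of the candidate with $z^+_{\pi(i')} = z_j$ for $i' \leq i$ and $z^+_{\pi(i')} = z_{j+1}$ for $i' > i$; note that $i = \numue$ and $i = 0$ give the squares $[-z_j\unitvec, z_j\unitvec]$ and $[-z_{j+1}\unitvec, z_{j+1}\unitvec]$. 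By the increment identities \eqref{eq:reduced_symmetric_solution_structure:fdist}--\eqref{eq:reduced_symmetric_solution_structure:maxdist} of Lemma~\ref{lem:symmetric_alg} I would write $o(i) - o(i-1) = q_i + c_i$, where $q_i = \altufscal_{\pi(i), j+1} - \altufscal_{\pi(i), \numuef - j}$ and $c_i = z_{j+1} - z_j - m_i$ is $z_{j+1}-z_j$ minus the $\maxdist$-increment $m_i$; thus $c_i = 0$ in Case~A ($\eta(i)-\eta(i-1) \geq z_{j+1}$), $c_i \in (0, z_{j+1}-z_j]$ in Case~B ($z_j \leq \eta(i)-\eta(i-1) < z_{j+1}$), and $c_i = z_{j+1}-z_j$ in Case~C ($\eta(i)-\eta(i-1) < z_j$).

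Next I would establish the two-sided bound $-(z_{j+1}-z_j) \leq q_i \leq 0$ for every $i$. Writing the normalized components $r_{i'\ell} = \altufscal_{i'\ell}/(z_\ell - z_{\ell-1}) \in [0,1]$, the ordering inequalities in the description of $\UFOG$ (Lemma~\ref{lem:rectangular_hull}, evaluated on the symmetric tight box where $\fold_{i'\ell}(\underline{\uevec}) = 0$ and $\fold_{i'\ell}(\overline{\uevec}) = z_\ell - z_{\ell-1}$) state precisely that $r_{i'\ell}$ is non-increasing in $\ell$. Since $j \geq \numuef/2$ forces $j+1 > \numuef - j$, this gives $r_{\pi(i), j+1} \leq r_{\pi(i), \numuef-j}$; combining with the breakpoint symmetry $z_{\numuef-j} - z_{\numuef-j-1} = z_{j+1} - z_j$ from Assumption~\ref{ass:symmetric_Z} yields $q_i = (z_{j+1}-z_j)\,(r_{\pi(i),j+1} - r_{\pi(i),\numuef-j}) \leq 0$, while $r \in [0,1]$ yields $q_i \geq -(z_{j+1}-z_j)$.

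Finally I would use the hypothesis to organize the increments. Applying the at-most-one condition with index $j+1$ shows there is at most one Case~B index, and since $\eta(i)-\eta(i-1)$ is non-increasing (Assumption~\ref{ass:symmetric_concave}(ii)) the indices split into a Case~A prefix, at most one Case~B index, and a Case~C suffix. On the prefix $\Delta o_i = q_i \leq 0$, so $o$ is non-increasing; on the suffix $\Delta o_i = q_i + (z_{j+1}-z_j) \geq 0$ by the lower bound on $q_i$, so $o$ is non-decreasing. Hence $\max_i o(i) = \max\{o(0), o(\numue)\}$, both attained by squares. As $(i^\star, j^\star)$ is optimal we have $o(i^\star) = \max_i o(i)$, so one of these squares is an optimal separator; rewriting $[-z_j\unitvec, z_j\unitvec] = [z_{\numuef-j}\unitvec, -z_{\numuef-j}\unitvec]$ via symmetry places it in the claimed form $[z_{j'}\unitvec, -z_{j'}\unitvec]$ with $j' \in [\numuef/2]$.

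The main obstacle is the sign/magnitude analysis of $q_i$: the whole argument hinges on translating the outer-approximation ordering constraints into the component monotonicity $r_{i',j+1} \leq r_{i',\numuef-j}$ and on matching denominators through breakpoint symmetry, so that $q_i$ is confined to $[-(z_{j+1}-z_j), 0]$. This is exactly what makes the Case~A prefix non-increasing and the Case~C suffix non-decreasing. The hypothesis itself enters only to force a single Case~B index, keeping the prefix/suffix split clean; without it the intermediate Case~B increments could in principle create a genuine interior (non-square) maximizer.
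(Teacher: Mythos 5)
Your proposal is correct and follows essentially the same route as the paper's proof: both restrict attention to the step-shaped candidates of Lemma~\ref{lem:reduced_symmetric_solution_structure}, use the per-index increment formulas underlying Lemma~\ref{lem:symmetric_alg} (Step~\ref{alg:update} of Algorithm~\ref{alg:symmetric}), bound the $\fdist$-increment in $[-(z_{j+1}-z_j),\,0]$ via the normalized ordering constraints of Lemma~\ref{lem:rectangular_hull} together with the breakpoint symmetry $z_j=-z_{\numuef-j}$, and invoke the non-increasing differences of $\eta$ plus the at-most-one hypothesis to split the indices into a Case~A prefix and Case~C suffix. Your phrasing as "monotone on the prefix, monotone on the suffix, hence the maximum sits at an endpoint square" is just a mild repackaging of the paper's two-case domination of $o_{i^\star j^\star}$ by $o_{0 j^\star}$ or $o_{\numue j^\star}$, so there is nothing substantive to distinguish the two arguments.
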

\begin{proof}[Proof of Lemma~\ref{lem:symmetric_full}]
The proof follows from Step~\ref{alg:update} of Algorithm~\ref{alg:symmetric}. 
For any iteration $i, j$ of the algorithm, let $o_{ij}$ be the value of $o$ after Step~\ref{alg:update} and recall that $o_{0j}=o_{\numue,j+1}$. 
Let $i^\star$, $j^\star$ and $\pi$ be an optimal solution returned by the algorithm. 
We claim that
$$
o_{i^\star j^\star} \leq \begin{cases}
    o_{0 j^\star} & \case \eta(i^\star)-\eta(i^\star-1) \geq z_{j^\star +1}\\
    o_{I j^\star} & \case \eta(i^\star)-\eta(i^\star-1) < z_{j^\star +1}
\end{cases}
$$

For the first case, note that by the non-increasing differences of $\eta$, each $i\leq i^\star$ satisfies $\eta(i)-\eta(i-1)\geq z_{j^\star+1}$. 
We thus have
$$
o_{i^\star j^\star} \overset{\text{(a)}}{=} o_{0 j^\star} + \sum_{i\in [i^\star]} \altufscal_{\pi(i), j^\star+1} - \altufscal_{\pi(i), \numuef-j^\star}
\overset{\text{(b)}}{\leq} o_{0 j^\star},
$$
where (a) follows from Step~\ref{alg:update}. 
For (b) we use
$$
\altufscal_{\pi(i), j^\star+1} = (z_{j^\star+1} - z_{j^\star})\frac{\altufscal_{\pi(i), j^\star+1}}{z_{j^\star+1} - z_{j^\star}} \overset{\text{(c)}}{\leq} (z_{j^\star+1} - z_{j^\star})\frac{\altufscal_{\pi(i), \numuef-j^\star}}{z_{\numuef - j^\star} - z_{\numuef - (j^\star+1)}} \overset{\text{(d)}}{=} \altufscal_{\pi(i), \numuef-j^\star},
$$
where (c) follows from $\altufvec\in\UFOG$, $j^\star\geq \numuef/2$, and Lemma~\ref{lem:rectangular_hull} and (d) holds since $z_{j} = - z_{\numuef -j},\; j\in[\numuef]$, by Assumption~\ref{ass:symmetric_Z}.

For the second case, note that the non-increasing differences of $\eta$ and the assumption that there is at most one $i$ with $\eta(i)-\eta(i-1) \in[z_{j^\star}, z_{j^\star+1})$ imply that each $i > i^\star$ satisfies $\eta(i)-\eta(i-1) < z_{j^\star}$.
We thus have
$$
o_{I j^\star} \overset{\text{(a)}}{=} o_{i^\star j^\star} + \sum_{i\in [i^\star]} \altufscal_{\pi(i), j^\star+1} + z_{j^\star+1} - z_{j^\star} - \altufscal_{\pi(i), \numuef-j^\star}
\overset{\text{(b)}}{\geq} o_{i^\star j^\star},
$$
where (a) is due to Step~\ref{alg:update} and (b) follows from $0 \leq \altufscal_{\pi(i), j^\star+1} - \altufscal_{\pi(i), \numuef-j^\star} \leq z_{j^\star+1} - z_{j^\star}$ by $\altufvec\in\UFOG$, $j^\star\geq \numuef/2$, Lemma~\ref{lem:rectangular_hull}, and Assumption~\ref{ass:symmetric_Z}.
\end{proof}

\section{Proof of Proposition~\ref{pro:pballs}}
\label{appendix:proof:pro:pballs}
\begin{proof}[Proof of Proposition~\ref{pro:pballs}]
Property~\ref{ass:symmetric_U} of Assumption~\ref{ass:symmetric_concave} immediately follows from the permutation invariance of the norm $\norm{\cdot}_{p}$. 
Property~\ref{ass:concave} of Assumption~\ref{ass:symmetric_concave}, on the other hand, follows from the closed-form expression of $\eta(i)$. 
Indeed, Lemma~\ref{lem:rotational_solution} implies that $\eta(i)=i\lambda(i)$, where $\lambda(i)=i^{-\frac{1}{p}}\delta$ is the maximal value satisfying $\norm{\sum_{i'\in[i]} \lambda(i)\unitvec_{i'}}_{p} \leq \delta$. 
We thus have $\eta(i)=i^{1-\frac{1}{p}}\delta$. 
Since $p\geq 1$, the exponent of $i$ in this expression is always between zero and one. This makes $\eta$ a concave function of $i$, which implies non-increasing differences. 
\end{proof}

\section{Proof of Proposition~\ref{pro:intersections}}
\label{appendix:proof:pro:intersections}
\begin{proof}[Proof of Proposition~\ref{pro:intersections}]
We first show the permutation invariance of $\UE$.
Let $\uvec\in\UE=\bigcap_{h\in[n]}\UE_h$ and $\smat\in \mathcal{S}$. 
Then $\smat\uvec\in\UE_h$ for all $h\in[n]$ by the symmetry and permutation invariance of each $\UE_h$. 
Thus $\smat\uvec\in \bigcap_{h\in[n]}\UE_h = \UE$.

We next show that $\eta(i)=\min_{h\in[n]}\eta_h(i)$, which implies property~\ref{ass:concave} of Assumption~\ref{ass:symmetric_concave}, as the minimum preserves non-increasing differences. 
Indeed, Lemma~\ref{lem:rotational_solution} implies $\eta_h(i)=i\lambda_h(i)$ for all $h\in[n], i\in[\numue]$, where $\lambda_h(i)$ is the maximal value such that $\lambda_h(i)\sum_{i'\in[i]}\unitvec_{i'}\in \UE_h$. 
Analogously, $\eta(i)=i\lambda(i)$, where $\lambda(i)$ is the maximal value such that $\lambda(i)\sum_{i'\in[i]}\unitvec_{i'}\in \UE=\bigcap_{h\in[n]}\UE_h$. 
The intersection immediately implies that $\lambda(i)\leq \min_{h\in[n]}\lambda_h(i)$. 
By convexity and the fact that $\nullvec\in\U_h$, we further observe that $\min_{h'\in[n]}\lambda_{h'}(i) \sum_{i'\in[i]} \unitvec_{i'} \in\U_h$ for all $h\in[n]$. 
We thus have $\min_{h\in[n]}\lambda_{h}(i) \sum_{i'\in[i]} \unitvec_{i'} \in\bigcap_{h\in[n]}\UE_h = \UE$, which implies that $\lambda(i)= \min_{h\in[n]}\lambda_h(i)$ and $\eta(i)=\min_{h\in[n]}\eta_h(i)$. 
\end{proof}

\section{Proof of Proposition~\ref{pro:bounding_by_symmetric}}
\label{appendix:proof:pro:bounding_by_symmetric}
\begin{proof}[Proof of Proposition~\ref{pro:bounding_by_symmetric}]
We use the reformulations offered by Observation~\ref{obs:conic-reformulation} to show the inequalities from the statement of the proposition. 
In particular, we show that (i) any optimal solution to the reformulation of $\aff\lproblem(\PU, \UFO)$ is a feasible solution to the reformulation of $\aff\lproblem(\PU, \UFOV{}^\mathrm{A})$ and (ii) any optimal solution to the reformulation of $\aff\lproblem(\PU, \UFOV{}^\mathrm{A})$ is a feasible solution to the reformulation of $\aff\lproblem(\PU, \UFOV{}^\mathrm{P})$. 

(i) $\aff\lproblem(\PU, \UFO) \leq \aff\lproblem(\PU, \UFOV{}^\mathrm{A})$: Note that the reformulations only differ in the affine cuts~\eqref{eq:counterpart_dual_cuts}. 
More specifically, the reformulation of $\aff\lproblem(\PU, \UFOV{}^\mathrm{A})$ only adds the subsets $\mathcal{Z}^\mathrm{A}_g\subseteq\mathcal{Z}$ of cuts for each $g\in[\nums]$ found by the separation algorithm. 
Thus, the solution space of $\aff\lproblem(\PU, \UFO)$ is a subset of the solution space of $\aff\lproblem(\PU, \UFOV{}^\mathrm{A})$, which immediately implies the inequality. 

(ii) $\aff\lproblem(\PU, \UFOV{}^\mathrm{A}) \leq \aff\lproblem(\PU, \UFOV{}^\mathrm{P})$: Let $(\ufvec, \svec)$ be a feasible solution of $\aff\lproblem(\PU, \UFOV{}^\mathrm{A})$. 
Note that the reformulations only differ in the last set of constraints in \eqref{eq:counterpart_dual} and the affine cuts~\eqref{eq:counterpart_dual_cuts}, which separate along indices $g\in[\nums], k\in[\numc_g]$. 
For the remainder of the proof, we fix one index pair $(g,k)$. 
For $s_{gk}=0$, the constraints of interest trivially hold. 
Thus, we w.l.o.g.~assume that $s_{gk}>0$ and define $\altufvec=\frac{1}{s_{gk}}\ufvec_{gk}$. 
Then, the last constraint of \eqref{eq:counterpart_dual} is equivalent to $\altufvec\in\UFOG_g$, which implies that $\fold^+(\altufvec)\in \UE_g$ and $\altufvec\in\FBOXg$. 
Using $\UE_g\subseteq\UE^\mathrm{P}_g$ and consequently $\EBOXg\subseteq[\underline{\uevec}{}^\mathrm{P}_g, \overline{\uevec}{}^\mathrm{P}_g]$, we find that $\fold^+(\altufvec)\in \UE^\mathrm{P}_g$ and $\altufvec\in\convex(\fold([\underline{\uevec}{}^\mathrm{P}_g, \overline{\uevec}{}^\mathrm{P}_g]))$, which implies that $(\ufvec, \svec)$ satisfies all constraints in the reformulation \eqref{eq:counterpart_dual} of $\aff\lproblem(\PU, \UFOV{}^\mathrm{P})$. 

For the cuts~\eqref{eq:counterpart_dual_cuts} we recall that $\maxdist_g$ depends on the embedded support set $\UE_g$. 
We denote by $\maxdist{}^\mathrm{P}_g([\zvec^-,\zvec^+])=\max_{\uevec\in\UE^\mathrm{P}_g} d(\uevec, [\zvec^-,\zvec^+])$ the maximal distance to the permutation invariant $\UE^\mathrm{P}_g$. 
Note that $\maxdist_g([\zvec^-,\zvec^+])\leq\maxdist{}^\mathrm{P}_g([\zvec^-,\zvec^+])$ by $\UE_g\subseteq\UE^\mathrm{P}_g$. 
Thus, by 
$$\fdist(\altufvec, [\zvec^-,\zvec^+])\leq \maxdist_g([\zvec^-,\zvec^+])\leq\maxdist{}^\mathrm{P}_g([\zvec^-,\zvec^+])$$
all cuts induced via $[\zvec^-,\zvec^+]\in \mathcal{Z}^\mathrm{A}_g$ are also satisfied by the reformulation of $\aff\lproblem(\PU, \UFOV{}^\mathrm{P})$. 
By Lemma~\ref{lem:reduced_symmetric_solution_structure} we can w.l.o.g.~restrict the reformulation of $\aff\lproblem(\PU, \UFOV{}^\mathrm{P})$ to a subset $\mathcal{Z}^\mathrm{P}\subseteq\mathcal{Z}$ satisfying the specific structure described in the lemma. 
Further, any violated cut in the reformulation of $\aff\lproblem(\PU, \UFOV{}^\mathrm{P})$ induced by some $[\zvec^-,\zvec^+]\in \mathcal{Z}^\mathrm{P}$ is guaranteed to be found by Algorithm~\ref{alg:symmetric}. 
Note that the order in which cuts are enumerated by Algorithm~\ref{alg:symmetric} solely depends on the current solution $\altufvec$, and recall that after each iteration of Step~\ref{alg:update} of the algorithm we have $o=\fdist(\altufvec, [\zvec^-,\zvec^+])-\max_{i^\star\in[\numue]}\{\eta_g(i^\star)-\sum_{i'\in[i^\star]}z^+_{i'}\}$, where $\eta_g(i)=\max_{\uevec\in\UE_g}\sum_{i'\in[i]}\lvert\uevec_{i'}\rvert$, which follows from the proof of Lemma~\ref{lem:symmetric_alg}.
Denote by $o^\mathrm{P}$ the value for the permutation invariant case, and let $\eta_g^\mathrm{P}(i)$ be the maximal $l_1$-norm of the first $i$ components of $\UE_g^\mathrm{P}$. 
We then have
$$\eta_g(i)=\max_{\uevec\in\UE}\sum_{i'\in[i]}\lvert\uevec_{i'}\rvert\leq \max_{\uevec\in\UE^\mathrm{P}}\sum_{i'\in[i]}\lvert\uevec_{i'}\rvert=\eta_g^\mathrm{P}(i),$$
which implies that $o\geq o^\mathrm{P}$. 
Thus, any violated cut for the reformulation of $\aff\lproblem(\PU, \UFOV{}^\mathrm{P})$ found by Algorithm~\ref{alg:symmetric} is also found as a violated cut for the reformulation of $\aff\lproblem(\PU, \UFOV{}^\mathrm{A})$. 
Consequently, at termination any feasible solution to the reformulation of $\aff\lproblem(\PU, \UFOV{}^\mathrm{A})$ is also a feasible solution to the reformulation of $\aff\lproblem(\PU, \UFOV{}^\mathrm{P})$. 
\end{proof}

\section{Proof of Proposition~\ref{pro:bounds}}
\label{appendix:proof:pro:bounds}
The proof of Proposition~\ref{pro:bounds} builds on the piecewise affine policies designed by \citet{BenTal2020} and \citet{Thomae2024}. 
To keep our work self-contained, we briefly summarize their key results in the context of our problem setting. 

\paragraph{Piecewise Affine Policies Via Domination.}
The key observation for piecewise affine policies via domination is that for any two uncertainty realizations $\uvec$ and $\hat{\uvec}$, where $\hat{\uvec}$ dominates $\uvec$ in the sense that $\uvec \leq \hat{\uvec}$, Assumption~\ref{ass:approximation}~\ref{ass:approximation:rhs} implies that $\bvec_g(\hat{\uvec})\leq \bvec_g(\uvec)$ for all $g\in[G]$. 
Thus, any fixed solution $\xvec$ that is feasible for $\hat{\uvec}$ is also feasible for $\uvec$. 
We use this property to construct approximate policies by replacing the uncertainty support $\U$ via a dominating support $\hat{\U}$, such that for each $\uvec\in \U$ there is some $\hat{\uvec}\in \hat{\U}$ with $\uvec\leq \hat{\uvec}$. 
Assumption~\ref{ass:approximation:UE} extends the domination properties described above to the embedded support $\UE$. 
Analogous to the dominating polytopes in the original works, we define a \emph{dominating polytope} $\hat{\UE} = \convex(\bm{v}_0, \bm{v}_1, \dots, \bm{v}_\numue)$ for $\UE$ such that $\bm{v}_0\in \mathbb{R}^\numue$, $\beta(\bm{v}_0)\in \mathbb{R}_+$, $\bm{v}_i = \bm{v}_0 + \rho \unitvec_i$ with $\rho\geq 0$ and $\sum_{i\in [\numue]} (\uescal_i - v_{0i})_+ \leq \rho$ for all $\uevec\in\UE$. 
Note that the last property implies that $\hat{\UE}$ dominates $\UE$  \citep[\emph{cf.}][Condition~(5)]{Thomae2024}. 
When replacing the embedded support with a dominating polytope $\hat{\UE}$, all worst-case realizations are contained within the vertices $\bm{v}_0, \dots, \bm{v}_\numue$. 
Thus, the dominating problem is equivalent to
\begin{equation}
\tag{$\hat{\baseproblem}(\hat{\UE})$}
\label{eq:hatP}
\begin{aligned}
    \minimize_{\xvec_{0}, \dots, \xvec_{\numue}} \quad& \cvec^\intercal\xvec_{00}\\
    \st \quad& \amat_g \xvec_{i} \leq \bvec_g(\embed^+ \bm{v}_{i}) && \forall g\in[\nums], i\in[\numue]_0 \\
    & \xvec_{it} = \xvec_{i' t} && \forall i,i'\in [\numue]_0, t\in [T]_0 \;\textnormal{with}\; \bm{v}_i^t = \bm{v}_{i'}^t,
\end{aligned}
\end{equation}
which follows from straightforward minor modifications to Lemma~2 of \cite{Thomae2024}. 
Intuitively, \ref{eq:hatP} determines one solution $\xvec_i$ to the embedded problem for each vertex $\bm{v}_i$. 
The last set of constraints couples the solutions $\xvec_i$ to ensure nonanticipativity. 
Note that the here-and-now decisions $\xvec_{i0}$ are identical for all $i\in [\numue]_0$ as $\bm{v}_i^0$ is the empty vector, and thus $\bm{v}_i^0 = \bm{v}_{i'}^0$ holds for any $i,i'\in [I]_0$. 
The solution $(\xvec_0, \dots,\xvec_\numue)$ to \ref{eq:hatP} induces a feasible piecewise affine solution $\xvec(\uvec)$ to $\baseproblem(\PU, \U)$ with the same objective value via $\xvec(\uvec) = \xvec_0 + \frac{1}{\rho}\sum_{i\in[\numue]} \left([\embed(\uvec)]_i - v_{0i}\right)_+ (\xvec_i - \xvec_0)$. 
This policy is further guaranteed to be within a factor $\beta (\bm{v}_0) + \rho$ of optimal solutions to $\baseproblem(\PU, \U)$.
Since these guarantees are central to our proof of Proposition~\ref{pro:bounds}, we first formalize them by adapting Theorem~1 of \citet{Thomae2024} to the dominating problem \ref{eq:hatP}. 

\begin{lemma}
\label{lem:dominatingpolicybound}
Let $\baseproblem(\PU, \U)$ be a problem instance that satisfies Assumption~\ref{ass:approximation}, let $\embed$ be such that Assumption~\ref{ass:approximation:UE} holds, and let $\hat{\UE} = \convex(\bm{v}_0, \bm{v}_1, \dots, \bm{v}_\numue)$ be a dominating polytope for $\UE$.
Then,
$$
\baseproblem(\PU, \U) \leq \hat{\baseproblem}(\hat{\UE}) \leq (\beta (\bm{v}_0) + \rho) \baseproblem(\PU, \U).
$$
\end{lemma}
\begin{proof}[Proof of Lemma~\ref{lem:dominatingpolicybound}]
First, note that either both problems are bounded or both problems are unbounded. 
This follows, as unboundedness of either of the problems implies the existence of a feasible ray of decisions $\xvec$ with negative objective value, implying that both problems are unbounded (\emph{cf.}~Part~1 of the original proof). 
For the remainder of the proof, we assume both problems are bounded and show each of the inequalities separately. 

For the first inequality, let $(\xvec_0, \dots,\xvec_\numue)$ be an optimal solution to \ref{eq:hatP} and consider the piecewise affine policy $\xvec(\uvec) = \xvec_0 + \frac{1}{\rho}\sum_{i\in[\numue]} \left([\embed(\uvec)]_i - v_{0i}\right)_+ (\xvec_i - \xvec_0)$ on $\baseproblem(\PU, \U)$. 
The nonanticipativity constraints of \ref{eq:hatP} imply that the here-and-now decisions of $\xvec(\uvec)$ are given by $\xvec(\uvec)_0 = \xvec_{00}$. 
Thus, $\mathbb{E}_{\PU} [\cvec(\suvec)^\intercal \xvec(\suvec)] = \cvec^\intercal\xvec_{00} = \hat{\baseproblem}(\hat{\U})$ by part \ref{ass:approximation:obj} of Assumption~\ref{ass:approximation}. 
Further, $\xvec(\uvec)$ is feasible for $\baseproblem(\PU, \U)$ as for any $g\in[G]$ and $\uvec\in\U_g$, we have
\begin{align*}
\amat_g \xvec(\uvec) 
& \overset{\textnormal{(a)}}{=} 
\amat_g \left(\left(1-\frac{1}{\rho}\sum_{i\in[\numue]} \left([\embed(\uvec)]_i - v_{0i}\right)_+\right) \xvec_0 + \frac{1}{\rho}\sum_{i\in[\numue]} \left([\embed(\uvec)]_i - v_{0i}\right)_+\xvec_i  \right)
\\
& \overset{\textnormal{(b)}}{\leq}
\left(\left(1-\frac{1}{\rho}\sum_{i\in[\numue]} \left([\embed(\uvec)]_i - v_{0i}\right)_+\right) \bvec_g(\embed^+\bm{v}_0) + \frac{1}{\rho}\sum_{i\in[\numue]} \left([\embed(\uvec)]_i - v_{0i}\right)_+\bvec_g(\embed^+\bm{v}_i)  \right)
\\
& \overset{\textnormal{(c)}}{=}
\bvec_g\left(\embed^+\left(\bm{v}_0 + \frac{1}{\rho}\sum_{i\in[\numue]} \left([\embed(\uvec)]_i - v_{0i}\right)_+(\bm{v}_i - \bm{v}_0)  \right)\right)
\\
& \overset{\textnormal{(d)}}{=}
\bvec_g(\embed^+(\max\{\embed(\uvec), \vvec_0\})) \overset{\textnormal{(e)}}{\leq} \bvec_g(\embed^+(\embed(\uvec))) \overset{\textnormal{(f)}}{=} \bvec_g(\uvec).
\end{align*}
Here, (a) follows from the definition of $\xvec$, and (b) follows from $\amat_g \xvec_i \leq \bvec_g(\embed^+ \bm{v}_i)$ for all $i\in [\numue]_0$ by feasibility of $(\xvec_0, \dots, \xvec_\numue)$ in \ref{eq:hatP}, linearity, and the factors pre-multiplying $\bvec_g$ being non-negative. 
In particular, the choice of $\rho$ implies that $1-\frac{1}{\rho}\sum_{i\in[\numue]}\left([\embed(\uvec)]_i - v_{0i}\right)_+ \geq 0$. 
Finally, (c) follows from the linearity of $\bvec_g$ and $\embed^+$, 
(d) follows from rearranging the terms, using the element-wise maximum, 
(e) follows from the dominating property implied by Assumptions~\ref{ass:approximation}~\ref{ass:approximation:rhs} and Assumption~\ref{ass:approximation:UE}, 
and (f) follows from $\embed^+$ being a left inverse of $\embed$.

For the second inequality, let $\xvec^\star$ be an optimal solution to $\baseproblem(\PU, \U)$ and consider $\xvec_i = (\beta(\vvec_0) + \rho)\xvec^\star\left(\embed^+\left(\frac{1}{\beta(\vvec_0) + \rho} \vvec_i\right)\right)$ for each $i\in[\numue]_0$. 
First, we find
$$
\frac{1}{\beta(\vvec_0)+\rho}\vvec_i 
\overset{\textnormal{(a)}}{=} \frac{1}{\beta(\vvec_0)+\rho}(\vvec_0 + \rho\unitvec_i) 
\overset{\textnormal{(b)}}{=} \frac{\beta(\vvec_0)}{\beta(\vvec_0)+\rho} \frac{1}{\beta(\vvec_0)}\vvec_0 +  \frac{\rho}{\beta(\vvec_0)+\rho} \unitvec_i 
\overset{\textnormal{(c)}}{\in} \UE_g
$$
for all $i\in[\numue]$ and $g\in[G]$.
Here, (a) and (b) are immediate, and (c) follows from $\unitvec_i\in \UE_g$ by Assumption~\ref{ass:approximation:UE}, $\frac{1}{\beta(\vvec_0)}\vvec_0 \in \UE_g$ by definition of $\beta$, and convexity of $\UE_g$. 
Similarly, $\frac{1}{\beta(\vvec_0)+\rho}\vvec_0\leq \frac{1}{\beta(\vvec_0)}\vvec_0\in \UE_g$ implies $\frac{1}{\beta(\vvec_0)+\rho}\vvec_0\in\UE_g$ by down-monotonicity from Assumption~\ref{ass:approximation:UE}. 
We now show that $(\xvec_0,\dots,\xvec_\numue)$ is a feasible solution for \ref{eq:hatP}. 
For the first set of constraints, we find
\begin{align*}
\amat_g \xvec_i &\overset{\textnormal{(a)}}{=}  (\beta(\vvec_0) + \rho)\amat_g\xvec^\star\left(\embed^+\left(\frac{1}{\beta(\vvec_0) + \rho} \vvec_i\right)\right) 
\\&\overset{\textnormal{(b)}}{\leq} 
(\beta(\vvec_0) + \rho) \bvec_g\left(\embed^+\left(\frac{1}{\beta(\vvec_0) + \rho} \vvec_i\right)\right) \overset{\textnormal{(c)}}{\leq} \bvec_g(\embed^+\vvec_i)
\end{align*}
for all $i\in[\numue]$ and $g\in[G]$. 
Here, (a) follows by definition of $\xvec_i$, 
(b) follows by $\embed^+\left(\frac{1}{\beta(\vvec_0) + \rho} \vvec_i\right)\in \U_g$ and feasibility of $\xvec^\star$, and (c) follows from $\bvec_g$ having non-positive coefficients, $\beta(\vvec_0) + \rho\geq 1$, and linearity of $\embed^+$. 
The non-anticipativity condition of \ref{eq:hatP} holds by non-anticipativity of $\xvec^\star$ and $\embed$ being information preserving. 
Finally, the objective value satisfies $\cvec^\intercal\xvec_{00} = (\beta(\vvec_0) + \rho)\cvec^\intercal \xvec^\star_0 = (\beta(\vvec_0) + \rho)\baseproblem(\PU,\U)$ by Assumption~\ref{ass:approximation}~\ref{ass:approximation:obj}. 
\end{proof}

We now use these policies and their approximation guarantees to prove Proposition~\ref{pro:bounds}. 
\begin{proof}[Proof of Proposition~\ref{pro:bounds}]
In the following, we explicitly prove the claim for $\aff \lproblem(\PU, \UFOV{}^{\mathrm{A}})$. 
Specifically, we show that $\aff\lproblem(\PUF, \UFOV{}^\mathrm{A}) \leq (\beta(\zvec) + \max_{g\in[G]}\maxdist_g([\nullvec, \zvec])) \cdot \baseproblem(\PU, \U)$ for each $\zvec\in Z^\mathrm{A}$. 
For the remainder of the proof, we w.l.o.g.~fix one $\zvec\in Z^\mathrm{A}$. 
Consider the polytope $\hat{\UE} = \convex(\bm{v}_0, \bm{v}_1, \dots, \bm{v}_\numue)$ as described above with $\bm{v}_0 = \zvec$ and $\rho = \max_{g\in[G]}\maxdist_g([\nullvec, \zvec])$. 
Then, $\hat{\UE}$ is a valid dominating polytope for $\UE$ since
\begin{align*}
\max_{\uevec \in \UE} \sum_{i\in[\numue]} (\uescal_i - z_i)_+ \overset{\textnormal{(a)}}{=} &
\max_{\uevec \in \UE} \sum_{i\in[\numue]} \min_{\uescal'_i \in [0, z_i]}\lvert\uescal_i - \uescal'_i\rvert 
= 
\max_{\uevec \in \UE} \min_{\uevec' \in [\nullvec, \zvec]} \norm{\uevec - \uevec'}_1 \\ \overset{\textnormal{(b)}}{=}&
\max_{\uevec \in \UE} \dist(\uevec, [\nullvec, \zvec]) 
\overset{\textnormal{(c)}}{=} 
\max_{g\in[G]} \max_{\uevec \in \UE_g} \dist(\uevec, [\nullvec, \zvec])
\overset{\textnormal{(d)}}{=} 
\max_{g\in[G]} \maxdist_g([\nullvec, \zvec])
\overset{\textnormal{(e)}}{=} \rho,
\end{align*}
where (a) follows from $\UE \geq \nullvec$ by Assumption~\ref{ass:approximation:UE},
(b) follows by definition of $\dist$,
(c) follows by definition of $\UE = \bigcup_{g\in[G]}\UE_g$,
(d) follows by definition of $\maxdist_g$, and
(e) follows by definition of $\rho$. 
Thus, \ref{eq:hatP} is a $\beta(\zvec) + \max_{g\in[G]}\maxdist_g([\nullvec, \zvec])$ approximation of $\baseproblem(\PU, \U)$ by Lemma~\ref{lem:dominatingpolicybound}. 
Let $(\xvec_0,\dots,\xvec_\numue)$ be an optimal solution to \ref{eq:hatP} and consider the affine policy $\xfvec(\ufvec) = \xvec_0 + \frac{1}{\rho}\sum_{i\in[\numue], j\in \numuef^+_i} \ufscal_{ij}(\xvec_i - \xvec_0)$ on $\UFOV{}^\mathrm{A}$. 
The nonanticipativity constraints of \ref{eq:hatP} imply that the here-and-now decisions of $\xfvec(\ufvec)$ are given by $\xfvec(\ufvec)_0 = \xvec_{00}$. 
Thus, $\mathbb{E}_{\PUF} [\cvec(\retr(\sufvec))^\intercal \xfvec(\sufvec)] = \cvec^\intercal\xvec_{00} = \hat{\baseproblem}(\hat{\UE})$ by part \ref{ass:approximation:obj} of Assumption~\ref{ass:approximation}. 
Finally, $\xfvec(\ufvec)$ is feasible for $\aff \lproblem(\PUF, \UFOV{}^\mathrm{A})$ as for any $g\in[G]$ and $\ufvec\in\UF_g$, we have
\begin{align*}
\amat_g \xfvec(\ufvec) 
& \overset{\textnormal{(a)}}{=} 
\amat_g \left(\left(1-\frac{1}{\rho}\sum_{i\in[\numue],j\in \numuef^+_i} \ufscal_{ij}\right) \xvec_0 + \frac{1}{\rho}\sum_{i\in[\numue],j\in \numuef^+_i} \ufscal_{ij}\xvec_i  \right)
\\
& \overset{\textnormal{(b)}}{\leq}
\left(\left(1-\frac{1}{\rho}\sum_{i\in[\numue],j\in \numuef^+_i} \ufscal_{ij}\right) \bvec_g(\embed^+\bm{v}_0) + \frac{1}{\rho}\sum_{i\in[\numue],j\in \numuef^+_i} \ufscal_{ij}\bvec_g(\embed^+\bm{v}_i)  \right)
\\
& \overset{\textnormal{(c)}}{=}
\bvec_g\left(\embed^+\left(\bm{v}_0 + \frac{1}{\rho}\sum_{i\in[\numue], j\in \numuef^+_i} \ufscal_{ij}(\bm{v}_i - \bm{v}_0)  \right)\right)
\\
& \overset{\textnormal{(d)}}{\leq}
\bvec_g(\embed^+(\fold^+(\ufvec))) \overset{\textnormal{(e)}}{=} \bvec_g(\retr(\ufvec)).
\end{align*}
Here, (a) follows from the definition of $\xfvec$, and (b) follows from $\amat_g \xvec_i \leq \bvec_g(\embed^+ \bm{v}_i)$ for all $i\in [\numue]_0$ by feasibility of $(\xvec_0, \dots, \xvec_\numue)$ in \ref{eq:hatP}, linearity, and the factors pre-multiplying $\bvec_g$ being non-negative. 
In particular, the cuts~\eqref{eq:counterpart_dual_cuts} for $\zvec$ imply that $1-\frac{1}{\rho}\sum_{i\in[\numue],j\in \numuef^+_i} \ufscal_{ij} \geq 0$ by $\sum_{i\in[\numue],j\in \numuef^+_i} \ufscal_{ij} = \fdist(\ufvec, [\nullvec, \zvec]) \leq \maxdist_g([\nullvec, \zvec]) \leq \rho$. 
Finally, (c) follows from the linearity of $\bvec_g$ and $\embed^+$, (d) follows from $\bm{v}_0 + \frac{1}{\rho}\sum_{i\in[\numue], j\in \numuef^+_i} \ufscal_{ij}(\bm{v}_i - \bm{v}_0) = \zvec + \sum_{i\in[\numue], j\in \numuef^+_i} \ufscal_{ij}\unitvec_i \geq \fold^+(\ufvec)$ and the dominating property implied by Assumption~\ref{ass:approximation}~\ref{ass:approximation:rhs} and Assumption~\ref{ass:approximation:UE}, and (e) follows from the definition of $\retr$. 
We thus observe that
$$
\aff\lproblem(\PUF, \UFOV{}^\mathrm{A}) \leq \hat{\baseproblem}(\hat{\UE}) \leq (\beta(\zvec) + \max_{g\in[G]}\maxdist_g([\nullvec, \zvec])) \cdot \baseproblem(\PU, \U)
$$
as desired.
The case for $\aff \lproblem(\PU, \UFO)$ follows analogously by setting $\UFOV{}^{\mathrm{A}} = \UFO$ and $Z^\mathrm{A} = Z$. 
\end{proof}

\section{Dynamic Cut Generation}
\label{appendix:exp:dyn-cut}
The experiments in Section~\ref{sec:numerical} do not use the full dynamic cut generation procedure described in Section~\ref{sec:solving_subproblem}; 
instead, they limit our policies to square cuts $[z_j\unitvec, -z_j\unitvec]$ for all $j\in[\numuef/2]$. 
Figure~\ref{fig:inventory_service_cg_performance} shows that dynamically adding more cuts when computing these policies may improve objective values by up to $5\%$ for small instances and by up to $2\%$ for medium-sized instances.
However, Table~\ref{tab:inventory_service_cg_runtime} indicates that these improvements come at the cost of significant increases in solution times. 

\begin{figure}[htb]
    \centering
    \includegraphics[width=.8\textwidth]{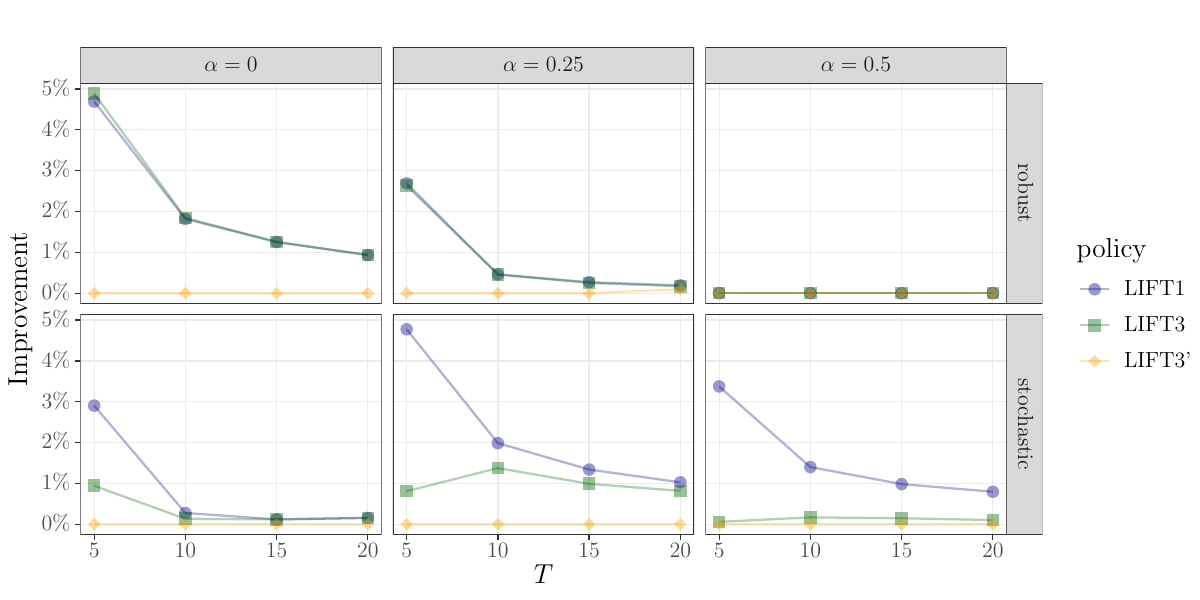}
    \caption{Relative objective improvements of different piecewise affine policies with dynamic cut generation over their respective counterparts as used in Section~\ref{sec:numerical} for different serial demand correlations $\alpha$ and time horizons $T$.
    }
    \label{fig:inventory_service_cg_performance}
\end{figure}

\begin{table}[htb]
\centering
\begin{tabular}{rrrrrrr}
  \hline
T & LIFT1 & LIFT1 CG & LIFT3 & LIFT3 CG & LIFT3' & LIFT3' CG \\ 
  \hline
  5 & 0.01 & 0.11 & 0.01 & 0.14 & 0.02 & 0.03 \\ 
   10 & 0.04 & 1.20 & 0.09 & 4.18 & 0.08 & 3.43 \\ 
   15 & 0.12 & 6.82 & 0.52 & 23.39 & 0.61 & 28.32 \\ 
   20 & 0.31 & 18.61 & 1.18 & 58.08 & 1.59 & 62.44 \\ 
   \hline
\end{tabular}
\caption{Average solution times (in seconds) for the different policies from Figure~\ref{fig:inventory_service_cg_performance} without and with dynamic cut generation (CG).}
    \label{tab:inventory_service_cg_runtime}
\end{table}

\end{document}